\theoremstyle{plain}
\newtheorem{thm}{Theorem}[section]
\newtheorem*{thm*}{Theorem}
\newtheorem{lem}[thm]{Lemma}
\newtheorem*{lem*}{Lemma}
\newtheorem{cor}[thm]{Corollary}
\newtheorem*{cor*}{Corollary}
\newtheorem{prop}[thm]{Proposition}
\newtheorem*{prop*}{Proposition}
\newtheorem*{ques*}{Question}
\theoremstyle{definition}
\newtheorem{df}[thm]{Definition}
\newtheorem*{df*}{Definition}
\newtheorem*{dfs*}{Definitions}
\newtheorem*{exercise*}{Exercise}
\theoremstyle{remark}
\newtheorem{rem}[thm]{Remark}
\newtheorem*{rem*}{Remark}
\newcommand{\RFoam}{R\text{-}\bo\Foam}
\newcommand{\ZFoam}{\Z\text{-}\bo\Foam}
\newcommand{\RMod}{R\text{-}\bo\Mod}
\newcommand{\qbinom}[2]{\genfrac{[}{]}{0pt}{}{#1}{#2}}
\DeclareMathOperator{\KR}{KR}
\DeclareMathOperator{\Mod}{Mod}
\DeclareMathOperator{\KRC}{KRC}
\DeclareMathOperator{\PD}{PD}
\DeclareMathOperator{\Foam}{Foam}
\newcommand{\fk}[1]{\mathfrak{#1}}
\newcommand{\sr}[1]{\mathscr{#1}}
\newcommand{\bo}[1]{\boldsymbol{#1}} % bold math
\newcommand{\Z}{\mathbf{Z}} % integers
\newcommand{\Q}{\mathbf{Q}} % rationals
\newcommand{\R}{\mathbf{R}} % reals
\newcommand{\C}{\mathbf{C}} % complex numbers
\renewcommand{\P}{\mathbf{P}} % projective space
\newcommand{\G}{\mathbf{G}} % Grassmannian
\newcommand{\x}{\times}
\newcommand{\emp}{\emptyset}
\renewcommand{\sl}{\fk{sl}}
\newcommand{\rKh}{\smash{\overline{\Kh}}}
\newcommand{\rKR}{\smash{\overline{\KR}}}
\DeclareMathOperator{\rk}{rk}
\DeclareMathOperator{\Hom}{Hom}
\DeclareMathOperator{\Id}{Id}
\DeclareMathOperator{\Kh}{Kh}
\patchcmd{\thmhead}{(#3)}{#3}{}{}
\g@addto@macro\bfseries{\boldmath}
\title{On $\sl(N)$ link homology with mod $N$ coefficients}
\author{Joshua Wang}
\date{}
\begin{document}
\maketitle

\begin{abstract}
	We construct an operator on $\sl(N)$ link homology with coefficients in a ring whose characteristic divides $N$. If $P$ is a prime number, we use this operator to exhibit structural features of $\sl(P)$ link homology that are special to characteristic $P$ that generalize well-known features of Khovanov homology that are special to characteristic $2$. 
\end{abstract}

\section{Introduction}\label{sec:Introduction}

A peculiar aspect of Khovanov homology \cite{MR1740682}, which associates a bigraded $R$-module $\Kh(L;R)$ to an oriented link $L\subset S^3$ for any commutative ring $R$, is the frequent dichotomy in structural behavior depending on whether the characteristic of $R$ is $2$. One early instance of this phenomenon concerns the relationship between (unreduced) Khovanov homology and reduced Khovanov homology $\rKh(L,q;R)$, an invariant of $L$ equipped with a distinguished component marked by a basepoint $q \in L$. Shumakovitch \cite[Theorem 3.2.A]{MR3205577} proves that if $R = \Z/2$, then $\rKh(L,q;\Z/2)$ is independent of the choice of basepoint and there is an isomorphism \[
	\Kh(L;\Z/2) \cong \rKh(L,q;\Z/2)\otimes \frac{(\Z/2)[X]}{X^2}.
\]The basepoint $q$ induces a basepoint operator $X_q\colon \Kh(L;R) \to \Kh(L;R)$ satisfying $X_q\circ X_q = 0$, and the above isomorphism intertwines the basepoint operator on $\Kh(L;\Z/2)$ with the action of $X$ on $(\Z/2)[X]/X^2$. Both of these statements are false when the characteristic of $R$ is not $2$. The split union of a trefoil and an unknot provides a counterexample.

In this paper, we generalize these results to $\sl(P)$ link homology in characteristic $P$ when $P$ is prime. The (unreduced) $\sl(N)$ link homology of an oriented link $L$ with coefficients in a ring $R$ is a bigraded $R$-module $\KR_N(L;R)$, and a basepoint $q \in L$ determines a basepoint operator $X_q\colon \KR_N(L;R) \to \KR_N(L;R)$ satisfying $X_q^N = 0$. The reduced $\sl(N)$ link homology of $L$ equipped with $q$ is a bigraded $R$-module $\rKR_N(L,q;R)$, and an additional basepoint $r \in L$ determines a basepoint operator $X_r \colon \rKR_N(L,q;R) \to \rKR_N(L,q;R)$ satisfying $X_r^N = 0$. 

\begin{thm}\label{thm:mainThm}
	Let $P$ be a prime number, and let $R$ be a ring of characteristic $P$. \begin{enumerate}
		\item \emph{Reduced $\sl(P)$ link homology over $R$ is basepoint-independent}: for any two basepoints $q,r$ on an oriented link $L$, there is a bigraded $R$-module isomorphism \[
			\Phi\colon \rKR_P(L,q;R) \to \rKR_P(L,r;R)
		\]that satisfies $\Phi\circ X_r + X_q\circ \Phi = 0$.
		\item \emph{Unreduced $\sl(P)$ link homology over $R$ is determined by reduced $\sl(P)$ link homology}: for any basepoint $q$ on an oriented link $L$, there is a bigraded $R$-module isomorphism \[
			\Psi\colon \KR_P(L;R) \to \rKR_P(L,q;R) \otimes R[X]/X^P
		\]that satisfies $\Psi \circ X_q + X \circ \Psi = 0$.
	\end{enumerate}
\end{thm}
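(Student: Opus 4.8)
The plan is to base everything on the operator promised in the abstract, which I will denote $\nu$. I expect $\nu$ to be a chain endomorphism of the $\sl(P)$ complex of homological degree $0$ and quantum degree $2$ (so $\nu$ and a basepoint operator $X_p$ have opposite quantum degrees, and both commute with the differential), satisfying, over any ring $R$ with $\Char R = P$ and for every basepoint $p$, the two identities
\[
\nu^P = 0, \qquad \nu X_p - X_p\nu = \Id .
\]
When $P = 2$ the commutator equals the anticommutator and this is exactly Shumakovitch's package ($\nu^2 = 0$, $X_q\nu + \nu X_q = \Id$); for $P > 2$ the second identity is the Heisenberg relation realised by the formal partial derivative $\partial_X$ acting on the marked strand, summed over the components of a resolution, and the characteristic hypothesis is what makes $\partial_X$ descend to $R[X]/X^P$, since $\partial_X(X^P) = PX^{P-1} = 0$ exactly when $P = 0$ in $R$.

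Granting this, part (2) is pure algebra. Because $X_q^P = 0$ (the $\sl(P)$ dot relation) and $\nu^P = 0$, the identities exhibit the chain complex --- and hence $\KR_P(L;R)$ --- as a module over the quotient of the first Weyl algebra $R\langle x,\partial\rangle/(\partial x - x\partial - 1)$ by the ideal generated by the central elements $x^P, \partial^P$, with $x$ and $\partial$ acting as $X_q$ and $\nu$. In characteristic $P$ that quotient is the matrix ring $\mathrm{Mat}_P(R)$, base-changed from $\mathbf{F}_P$, where it is split because it acts on $\mathbf{F}_P[x]/x^P$. Morita theory then gives a bigraded isomorphism $\KR_P(L;R) \cong W\otimes_R R[X]/X^P$ under which $X_q$ becomes $\Id_W\otimes(X\cdot)$, where $W = e\cdot\KR_P(L;R)$ for the rank-one idempotent $e = X_q^{P-1}\nu^{P-1}/(P-1)!$; at the chain level $e$ is an idempotent chain map with image $\ker(X_q)$, i.e.\ the subcomplex with the marked strand labelled $X^{P-1}$, which is the reduced complex up to a grading shift, so $W\cong\rKR_P(L,q;R)$. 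Finally, composing the isomorphism with the ring automorphism $X\mapsto -X$ of $R[X]/X^P$ turns the intertwining relation $\Psi X_q = X\Psi$ into $\Psi X_q + X\Psi = 0$.

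For part (1) I would exploit the same matrix-ring structure by a trick that supplies the sign automatically. Given basepoints $q, r$, put $y = X_q - X_r$. Then $y$ commutes with $\nu$ (as $\nu X_q - X_q\nu = \nu X_r - X_r\nu = \Id$) and with $X_q$ (distinct basepoint operators commute), hence with all of $\mathrm{Mat}_P(R)$; and $y^P = X_q^P - X_r^P = 0$ because $X_q, X_r$ commute and we are in characteristic $P$. So $y$ acts on the multiplicity space $W$ of the decomposition $\KR_P(L;R)\cong W\otimes_R R[X]/X^P$ as a nilpotent endomorphism $\beta$ with $\beta^P = 0$, and $X_r = \Id_W\otimes(X\cdot) - \beta\otimes\Id$. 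A direct computation then identifies $\rKR_P(L,q;R)\cong\ker(X_q)\cong W$, with $X_r$ acting as $-\beta$, and symmetrically $\rKR_P(L,r;R)\cong\ker(X_r)\cong W$ (via the top-degree component), with $X_q$ acting as $+\beta$; under these identifications the identity map $\Phi\colon W\to W$ furnishes the required isomorphism $\rKR_P(L,q;R)\to\rKR_P(L,r;R)$, and $\Phi X_r + X_q\Phi = -\beta + \beta = 0$. In particular reduced $\sl(P)$ homology is basepoint-independent, including under moving the basepoint to another component, with no separate argument needed.

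The real work is upstream, in the step summarised in the first paragraph: building $\nu$ inside the foam (or matrix-factorisation) model and verifying that it commutes with the differential and obeys $\nu^P = 0$ and $\nu X_p - X_p\nu = \Id$ at the chain level. This is where $\Char R \mid P$ is genuinely used, propagating the local vanishing $\partial_X(X^P) = 0$ through the evaluation of decorated foams, and I expect it to be the main obstacle. What remains --- matching grading conventions in $\rKR_P(L,q;R)\cong W$, and checking that the operator $X_r$ produced by the reduction agrees with the one induced from the chain-level dot --- should be routine bookkeeping once the conventions are fixed.
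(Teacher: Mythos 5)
Your derivation is correct, but it reaches the theorem by a genuinely different route than the paper. The paper works with the explicit maps $X_r^{P-1}\nabla^{P-1}$ and $X_q^{P-1}\nabla^{P-1}$ and shows by a direct generalized-Leibniz computation, using Wilson's theorem, that they are mutually inverse chain maps between the reduced subcomplexes (Proposition~\ref{prop:inversesNprime}), with the anticommutation relation coming from the separate equivariance statement of Lemma~\ref{lem:equivarianceNprime}; part (2) is then deduced from part (1) by the split-union-with-an-unknot trick of \cite{MR3071132} together with Corollary~\ref{cor:splitUnionReducedslNHomology}. You instead package the chain-level relations $X_q^P=0$, $\nu^P=0$, $\nu X_q-X_q\nu=\Id$ into a module structure over the restricted Weyl algebra, identify that algebra with $\mathrm{Mat}_P(R)$, and let Morita theory produce $\KRC_P(D;R)\cong W\otimes R[X]/X^P$ at the chain level; part (2) is then immediate, and part (1) follows from the commutant trick with $y=X_q-X_r$, bypassing the paper's binomial identity entirely (you only need $(P-1)!$ invertible mod $P$ for the idempotent, not Wilson's theorem --- primality instead enters through the splitness of the restricted Weyl algebra, which is exactly what fails for composite $N$). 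Your scheme does rest on two chain-level identities the paper never states, but both hold for the paper's operator $\nabla$: the Heisenberg relation is immediate from the Leibniz rule, since a weight-one dot at $p$ is the single variable $X$ of a $1$-labelled facet and $\nabla(X\bo{F})=\bo{F}+X\nabla(\bo{F})$; and $\nabla^P=0$ in characteristic $P$ because the mixed multinomial coefficients in $(\sum_i\partial_i)^P$ are divisible by $P$ and $\partial_i^{P}$ annihilates polynomials mod $P$. The one place you understate the difficulty is the deferred ``upstream'' step: what requires the characteristic hypothesis is not the descent of $\partial_X$ to $R[X]/X^P$ but the well-definedness of $\nabla$ on the universal-construction quotients $\sr F_P(\Gamma;R)$, proved in the paper by integration by parts against the Robert--Wagner evaluation (Lemma~\ref{lem:evaluationOfDFisZeroModN} and Proposition~\ref{prop:DwellDefinedModN}); granting that construction (as the paper's own proof of the theorem also does), your remaining bookkeeping --- $\ker X_q=X_q^{P-1}\KRC_P(D;R)$ via freeness over $R[X_q]/X_q^P$, the identification of $\ker X_r$ with $W$ through the top component, compatibility with the differential because $d$ commutes with $X_q$ and $\nabla$, and the grading shifts --- goes through as you indicate.
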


\begin{rem}
	The generalization of Theorem~\ref{thm:mainThm} to $\sl(N)$ link homology over a ring $R$ is false if the characteristic of $R$ does not divide $N$. A counterexample is provided by the split union of a trefoil and an unknot. The unreduced $\sl(N)$ link homology over $R$ of the right-handed trefoil, suppressing grading information, is given by the homology of the chain complex\[
		\begin{tikzcd}
			\displaystyle\frac{R[X]}{X^N} \ar[r,"NX^{N-1}"] & \displaystyle\frac{R[X]}{X^N} \ar[r] & 0 \ar[r] & \displaystyle \frac{R[X]}{X^N}
		\end{tikzcd}
	\]and its reduced $\sl(N)$ link homology is $R^3$. This computation can be done by hand. 

	The author does not know if the generalization of Theorem~\ref{thm:mainThm} to $\sl(N)$ link homology over $R$ holds if we only assume that the characteristic of $R$ divides $N$. 
\end{rem}

For $N \ge 2$, $\sl(N)$ link homology categorifies the $\sl(N)$ link polynomial $P_N \in \Z[q,q^{-1}]$ which is determined by the skein relation \[
	q^NP_N\left(\begin{gathered}
		\vspace{-3pt}
		\includegraphics[width=.03\textwidth]{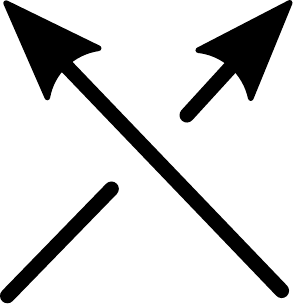}
	\end{gathered}\right) - q^{-N}P_N\left(\begin{gathered}
		\vspace{-3pt}
		\includegraphics[width=.03\textwidth]{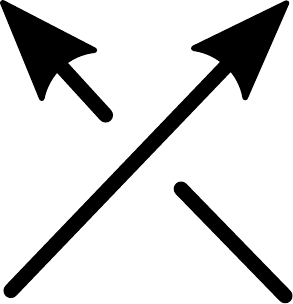}
	\end{gathered}\right) = (q + q^{-1})P_N\left(\begin{gathered}
		\vspace{-3pt}
		\includegraphics[width=.03\textwidth]{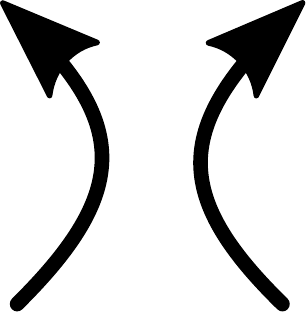}
	\end{gathered}\right)
\]and the normalization $P_N(\text{unknot}) = q^{N-1} + q^{N-3} + \cdots + q^{-(N-1)}$. Khovanov and Rozansky first defined $\sl(N)$ link homology using matrix factorizations over a field of characteristic zero in \cite{MR2391017}. There have since been many other approaches to constructing and generalizing $\sl(N)$ link homology (see the introduction of \cite{MR3787560} for a list and comparisons between them). We use the explicit combinatorial construction of $\sl(N)$ link homology over an arbitrary coefficient ring given by Robert and Wagner in \cite{MR4164001}. We provide the construction in full in section~\ref{sec:Preliminaries}. This version of $\sl(N)$ link homology agrees with an earlier version constructed by Queffelec and Rose \cite{MR3545951}, which agrees with Khovanov--Rozansky's original invariant when taken over a field of characteristic zero.

Let $N \ge 2$ be an integer and let $R$ be an arbitrary ring. Associated to a diagram $D$ of an oriented link $L$ is a chain complex $\KRC_N(D;R)$ over $R$ whose homology is $\KR_N(L;R)$. A basepoint $q$ on $D$ defines a chain map $X_q\colon \KRC_N(D;R) \to \KRC_N(D;R)$ satisfying $X_q^N = 0$ whose induced map on homology is the basepoint operator. The homology of the subcomplex $X_q^{N-1}\KRC_N(D;R)$ is the reduced $\sl(N)$ link homology of $L$ at $q$. 

If the characteristic of $R$ divides $N$, we define a chain map $\nabla\colon \KRC_N(D;R) \to \KRC_N(D;R)$ which behaves like a first-order differential operator. It satisfies a Leibniz product rule and is ultimately defined by the operator $\sum_i \frac{\partial}{\partial X_i}$ acting on a polynomial ring $R[X_1,\ldots,X_k]$. The construction of $\nabla$ is given in section~\ref{sec:ConstructionOfTheOp}. When $N = 2$, the operator agrees with Shumakovitch's acyclic differential $\nu$ on the Khovanov chain complex with mod $2$ coefficients, defined in \cite[section 3]{MR3205577}. In section~\ref{sec:ReducedslNLinkHomology}, we prove the following proposition (see Proposition~\ref{prop:inversesNprime}). 

\begin{prop}\label{prop:doubleCompositesNprime}
	Let $D$ be a diagram of an oriented link $L$ with a pair of basepoints $q,r \in D$, and let $\KRC_N(D;R)$ be its $\sl(N)$ chain complex over $R$. Assume that the characteristic of $R$ divides $N$ so that the operator $\nabla$ on $\KRC_N(D;R)$ is defined, and consider the chain maps \[
		\begin{tikzcd}
			X_q^{N-1}\KRC_N(D;R) \ar[r,swap,bend right = 30pt,"X_r^{N-1}\nabla^{N-1}"] & X_r^{N-1}\KRC_N(D;R) \ar[l,swap,bend right=30pt,"X_q^{N-1}\nabla^{N-1}"]
		\end{tikzcd}
	\]which map between the subcomplexes whose homologies are the reduced $\sl(N)$ link homologies of $L$ at the basepoints $q,r$. Then their composition is given by
	\[
		(X_q^{N-1}\nabla^{N-1})\circ(X_r^{N-1}\nabla^{N-1}) = \begin{cases}
			\Id & N \text{ is prime}\\
			0 & N \text{ is composite}.
		\end{cases}
	\]
\end{prop}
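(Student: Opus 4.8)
The plan is to collapse the statement into a single identity of $R$-module operators on the chain complex and then evaluate that operator. Every element of $X_q^{N-1}\KRC_N(D;R)$ has the form $X_q^{N-1}\xi$, and the composition in the statement sends it to $X_q^{N-1}\nabla^{N-1}X_r^{N-1}\nabla^{N-1}X_q^{N-1}\xi$; so it suffices to show that the operator
\[
	\Theta\;:=\;X_q^{N-1}\,\nabla^{N-1}\,X_r^{N-1}\,\nabla^{N-1}\,X_q^{N-1}\;\colon\;\KRC_N(D;R)\longrightarrow\KRC_N(D;R)
\]
equals $((N-1)!)^2\,X_q^{N-1}$, for then the composition is multiplication by $((N-1)!)^2$ on $X_q^{N-1}\KRC_N(D;R)$, hence $\Id$ exactly when $((N-1)!)^2\equiv 1\pmod{\Char R}$ and $0$ exactly when $((N-1)!)^2\equiv 0\pmod{\Char R}$. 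Now $\KRC_N(D;R)$ decomposes as a direct sum of quotients of polynomial rings $R[X_1,\dots,X_k]$ (one per generator of the chain groups) on which $\nabla$ acts as $\sum_i\partial/\partial X_i$ and $X_q,X_r$ act by multiplication by the facet variables at $q$ and $r$, and $\Theta$ respects this decomposition, so I would verify the identity one summand at a time. On such a summand one has $[\nabla,X_q]=[\nabla,X_r]=\Id$, $[X_q,X_r]=0$, $X_q^N=X_r^N=0$ (the last two already recorded in the paper), and also $\nabla^N=0$ --- the coefficient of $X^\beta$ in $\nabla^N(X^\gamma)$ being $N!\prod_i\binom{\gamma_i}{\beta_i}$, a multiple of $N!$ and hence of $\Char R$.

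Granting these relations the computation is short. Set $Z:=X_q-X_r$; it commutes with $X_q$, and because $[\nabla,X_q]=[\nabla,X_r]$ it commutes with $\nabla$, so $Z$ is central for the calculation. Expanding $X_r^{N-1}=(X_q-Z)^{N-1}$ by the binomial theorem and pulling the powers of $Z$ to the front,
\[
	\Theta\;=\;\sum_{k=0}^{N-1}\binom{N-1}{k}(-1)^{N-1-k}\,Z^{\,N-1-k}\,\bigl(X_q^{N-1}\nabla^{N-1}X_q^{\,k}\nabla^{N-1}X_q^{N-1}\bigr).
\]
Each bracket I would evaluate using the normal-ordering identity $\nabla^{j}X_q^{m}=\sum_{l\ge 0}\binom{j}{l}\tfrac{m!}{(m-l)!}X_q^{\,m-l}\nabla^{\,j-l}$ (immediate from $[\nabla,X_q]=\Id$) together with $X_q^N=\nabla^N=0$: multiplying $\nabla^{N-1}X_q^{k}$ on the right by $\nabla^{N-1}$ kills every term except $l=N-1$, which survives only when $k=N-1$ and then produces $(N-1)!\,\nabla^{N-1}$, while the same device turns $X_q^{N-1}\nabla^{N-1}X_q^{N-1}$ into $(N-1)!\,X_q^{N-1}$. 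Hence the bracket equals $\delta_{k,N-1}((N-1)!)^2X_q^{N-1}$, only the $k=N-1$ term of the sum remains, and $\Theta=((N-1)!)^2X_q^{N-1}$.

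Finally one reads $((N-1)!)^2$ modulo $\Char R$. If $N$ is prime then $\Char R=N$, since $\Char R$ divides $N$ and is $>1$ (as $R\neq 0$); Wilson's theorem gives $(N-1)!\equiv-1$, so $((N-1)!)^2\equiv 1$ and the composition is $\Id$. If $N$ is composite then $N\mid((N-1)!)^2$ --- indeed $N\mid(N-1)!$ for composite $N>4$, while $4\mid 36=(3!)^2$ --- so $\Char R\mid N\mid((N-1)!)^2$ and the composition is $0$. The only step that really needs attention is the first paragraph: checking that on each polynomial summand $\nabla$, $X_q$, $X_r$ obey precisely the relations listed --- in particular the normalization $\nabla(X_q)=1$ and the vanishing $\nabla^N=0$ --- after which everything reduces to the formal manipulation above and Wilson's theorem.
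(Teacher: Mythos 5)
Your argument is correct, and it is organized differently from the paper's. The paper proves the statement (as Proposition~\ref{prop:inversesNprime}) by applying the generalized Leibniz rule directly to $X^M\nabla^M(Y^M\nabla^M(X^M\bo{F}))$ with $M=N-1$, using only $X^N=0$ to kill terms; this leaves a double binomial sum whose coefficients must be shown to vanish mod $N$ for $\ell\ge 1$ (via $\binom{M}{i}\equiv(-1)^i$ and $\sum_i(-1)^i\binom{\ell}{i}=0$ when $N$ is prime, Wilson's theorem when $N$ is composite, and a separate check at $N=4$), with the $\ell=0$ term giving $(M!)^2$. You instead prove the single operator identity $X_q^{N-1}\nabla^{N-1}X_r^{N-1}\nabla^{N-1}X_q^{N-1}=((N-1)!)^2X_q^{N-1}$ from the Weyl-algebra relations $[\nabla,X_q]=[\nabla,X_r]=\Id$, $X_q^N=X_r^N=0$, and the extra relation $\nabla^N=0$, which the paper never states or uses; the normal-ordering identity plus centrality of $Z=X_q-X_r$ then collapses everything to the scalar $((N-1)!)^2$, so the only arithmetic input is $((N-1)!)^2\bmod N$. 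This is a genuinely leaner reduction: the case analysis is confined to the elementary fact $N\mid((N-1)!)^2$ for composite $N$ (including $4\mid 36$), and the binomial-sum manipulations disappear.

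The one place to tighten is your justification of the relations, in particular $\nabla^N=0$. The chain groups $\sr F_N(D_v;R)$ are not presented as a direct sum of quotients of polynomial rings ``one per generator,'' and no such decomposition compatible with $\nabla$, $X_q$, $X_r$ is available or needed. Instead, all the relations you use hold as operator identities directly from the paper's construction: $[\nabla,X_q]=\Id$ is the Leibniz rule $\nabla(Q\bo{F})=\nabla(Q)\bo{F}+Q\nabla(\bo{F})$ applied to the weight-$1$ dot variable at $q$ (whose $\nabla$ is $1$); $X_q^N=0$ is Lemma~\ref{lem:weight1basepointOpHasNthPowerZero}; and since $\nabla$ acts purely through decorations, $\nabla^N(P F)=(\nabla^N P)F$ on $\RFoam_N(\emp,D_v)$, where your own coefficient computation $N!\prod_i\binom{\gamma_i}{\beta_i}$ shows $\nabla^N=0$ on the polynomial ring because $\Char R\mid N\mid N!$; this descends to $\sr F_N(D_v;R)$ by Proposition~\ref{prop:DwellDefinedModN}. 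With that substitution your proof is complete (the degenerate case where $q$ and $r$ lie on the same edge of some resolution, so $X_q=X_r$ and $Z=0$, causes no trouble since only the $Z^0$ term survives anyway).
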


Primality of $N$ is involved because of the identity \[
	(N-1)!(N-1)! \bmod N = \begin{cases}
		1 & N \text{ is prime}\\
		0 & N \text{ is composite}
	\end{cases}
\]which follows from Wilson's theorem in elementary number theory. The first statement of Theorem~\ref{thm:mainThm} follows from Proposition~\ref{prop:doubleCompositesNprime}. The second statement follows from the first statement by a disjoint union formula for $\sl(N)$ link homology together with an argument appearing in the proof of \cite[Proposition 1.7]{MR3071132}. 

The operator $\nabla$ may also be defined on a generalization of $\sl(N)$ link homology called colored $\sl(N)$ link homology \cite{MR3234803,MR2863366}. The colored $\sl(N)$ link homology of an unknot equipped with an integer $k$ satisfying $1 \leq k \leq N-1$ coincides with the cohomology of the complex Grassmannian $\G(k,N)$. In section~\ref{sec:grassmannian}, we explicitly compute the operator $\nabla$ in this case in terms of the basis for $H^*(\G(k,N))$ given by Schur polynomials. Our formula for $\nabla$ on Schur polynomials is a special case of Proposition 1.1 of \cite{MR4098997}, in which the differential operator $\sum_i \frac{\partial}{\partial X_i}$ is used to prove a determinant conjecture of Stanley \cite{stanley2017schubert}. 

\vspace{20pt}

We point out two other instances where characteristic $2$ coefficients plays a special role in Khovanov homology. \begin{itemize}
	\item With $\Z/2$ coefficients, there is a spectral sequence \cite{MR2141852} from the reduced Khovanov homology of a link to the Heegaard Floer homology of the double branched cover of the mirror of the link. The spectral sequence appears not to generalize to other characteristics; instead, odd Khovanov homology, which agrees with Khovanov homology in characteristic $2$, was constructed in \cite{MR3071132} to replace Khovanov homology in a generalized spectral sequence. There have since been spectral sequences constructed from odd Khovanov homology to the monopole \cite{MR2764887} and instanton \cite{MR3394316} Floer homologies of the double branched cover. Scaduto's spectral sequence \cite{MR3394316} holds over any coefficient ring. 
	\item Lee defined a variant of Khovanov homology \cite{MR2173845} with coefficients in $\Q$. Rasmussen used Lee homology to define his $s$-invariant which he used in his celebrated combinatorial proof of the Milnor conjecture \cite{MR2729272}. Shumakovitch generalized Lee homology to characteristic $p$ coefficients, for $p$ an odd prime, and showed that the same basic features of Lee homology remain valid \cite{MR3205577}. Shumakovitch used this result and Lee's work on alternating links to show there is no $p$-torsion, for $p$ an odd prime, in the integral Khovanov homology of any alternating link, and in contrast, the integral Khovanov homology of any alternating link except for forests of unknots has $2$-torsion \cite[Corollaries 2 and 5]{MR3205577}.
\end{itemize}

\begin{rem}
	There is a spectral sequence from Khovanov homology to singular instanton homology \cite{MR2805599} defined over any coefficient ring. Kronheimer--Mrowka used this spectral sequence to prove that Khovanov homology detects the unknot. Reduced and unreduced singular instanton homology exhibit a behavior analogous to Khovanov homology. With coefficients in $\Z/2$, the dimension of unreduced singular instanton homology is twice the dimension of reduced singular instanton homology by \cite[Lemma 7.7]{MR3880205}. This statement is false over other characteristics. 
\end{rem}
\begin{rem}
	Knot Floer homology \cite{MR2065507,MR2704683} appears to behave uniformly over different characteristics, unlike Khovanov homology and singular instanton homology. It is an open problem to explain (or disprove) the apparent absence of torsion in the knot Floer homology of knots in $S^3$. 
\end{rem}

\theoremstyle{definition}
\newtheorem*{ack}{Acknowledgments}
\begin{ack}
	I thank Christian Gaetz, Lukas Lewark, and Roger Van Peski for helpful discussions and correspondences. I would also like to thank my advisor Peter Kronheimer for his continued guidance, support, and encouragement. This material is based upon work supported by the NSF GRFP through grant DGE-1745303.
\end{ack}

\section{Preliminaries}\label{sec:Preliminaries}

In this section, we review the construction of $\sl(N)$ link homology, following \cite{MR4164001}. For readers familiar with Khovanov homology but unfamiliar with $\sl(N)$ link homology, we briefly summarize the construction. Let $D$ be an oriented link diagram and let $R$ be a ring. If $c(D)$ denotes the set of crossings of $D$, then to each function $v\colon c(D) \to \{0,1\}$, we associate an \textit{$\sl(N)$ MOY graph} $D_v$ by applying a local change to each crossing of $D$ according to $v$. The object $D_v$ is analogous to a collection of circles in the plane obtained by a complete resolution in Khovanov homology. If $v,w\colon c(D)\to\{0,1\}$ agree at all crossings except at one where $v(c) = 0$ and $w(c) = 1$, then there is an \textit{$\sl(N)$ foam} $F_{vw}\colon D_v \to D_w$ which is a $2$-dimensional object analogous to the $2$-dimensional merge and split cobordisms in Khovanov homology. 

Associated to each $D_v$ is a free $R$-module $\sr F_N(D_v;R)$, analogous to the free $R$-module of rank $2^k$ associated to a collection of $k$ circles in the plane in Khovanov homology. Associated to each $F_{vw}\colon D_v\to D_w$ is an $R$-module map $\sr F_N(F_{vw};R) \colon \sr F_N(D_v;R) \to \sr F_N(D_w;R)$, analogous to the merge and split maps defined explicitly in Khovanov homology. The underlying $R$-module of the $\sl(N)$ chain complex $\KRC_N(D;R)$ is the direct sum of the $R$-modules $\sr F_N(D_v;R)$, and the differential is a signed sum of the maps $\sr F_N(F_{vw};R)$, just as in Khovanov homology. 

In section~\ref{subsec:FoamsAndMOYGraphs}, we review $\sl(N)$ foams and $\sl(N)$ MOY graphs. In section~\ref{subsec:TheUniversalConstruction}, we provide the construction of the $R$-module $\sr F_N(\Gamma;R)$ associated to an $\sl(N)$ MOY graph $\Gamma$, and the $R$-module map $\sr F_N(F;R)\colon \sr F_N(\Gamma;R) \to \sr F_N(\Gamma';R)$ associated to an $\sl(N)$ foam $F\colon \Gamma \to \Gamma'$. The construction is by a general procedure called the \textit{universal construction}, which requires as input a number for each \textit{closed} $\sl(N)$ foam. Robert--Wagner \cite{MR4164001} give a combinatorial formula for associating numbers to closed foams, which we explain in section~\ref{subsec:RobertWagnerEvaluation}. Some important basic properties of $\sr F_N(\Gamma;R)$ are reviewed in section~\ref{subsec:MOYCalculus}, and the construction of $\sl(N)$ link homology is given in section~\ref{subsec:slNLinkHomology}. In section~\ref{subsec:BasepointOperators}, we define basepoint operators and reduced $\sl(N)$ link homology. The version of reduced $\sl(N)$ link homology we use agrees with the more general constructions in \cite{MR3458146,MR3982970}.

\subsection{Foams and MOY graphs}\label{subsec:FoamsAndMOYGraphs}

\begin{df}
	A \textit{closed $\sl(N)$ foam $F$ (in $\R^3$)} consists of: \begin{itemize}[noitemsep]
		\item A compact space $F^2 \subset \R^3$ with compact subsets $F^0 \subset F^1 \subset F^2$ such that \begin{itemize}[noitemsep]
			\item $F^0$ is a finite set of points called \textit{singular points}, 
			\item $F^1\setminus F^0$ is a smoothly embedded $1$-manifold with finitely many components, which are called \textit{bindings},
			\item $F^2\setminus F^1$ is a smoothly embedded $2$-manifold with finitely many components, which are called \textit{facets}.
		\end{itemize}
		\item An orientation of each facet, and an orientation of each binding. 
		\item A label $\ell(f) \in \{0,\ldots,N\}$ for each facet $f$. 
	\end{itemize}We require that \begin{itemize}[noitemsep]
		\item Points on bindings and singular points have neighborhoods in $\R^3$ that intersect $F^2$ in the following local models: \[
			\labellist
			\pinlabel $\bullet$ at 60 87
			\pinlabel $\bullet$ at 287 57
			\endlabellist
			\begin{gathered}
				\includegraphics[width=.2\textwidth]{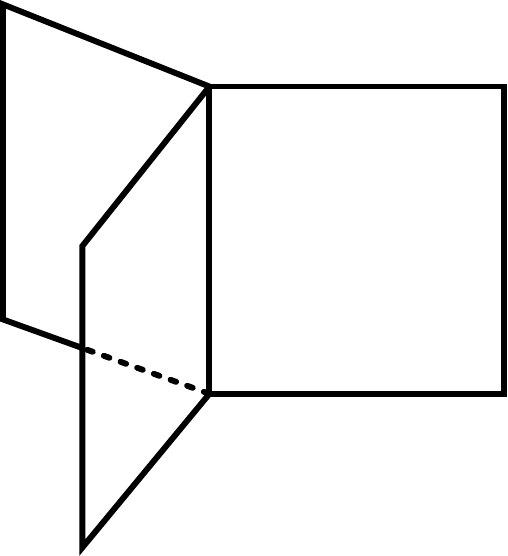}
			\end{gathered} \qquad\qquad \begin{gathered}
				\includegraphics[width=.17\textwidth]{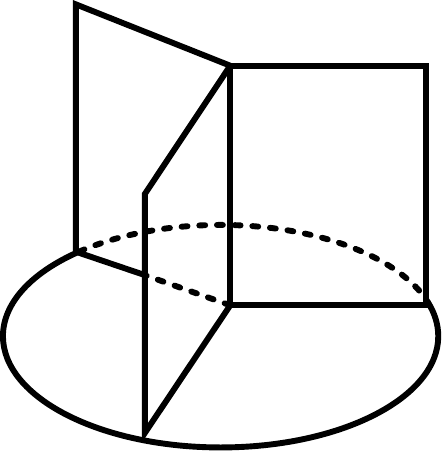}
			\end{gathered}
		\]In other words, every point on a binding has a neighborhood in which $F^2$ is an interval times a neighborhood of a degree $3$ vertex of a planar graph, and every singular point has a neighborhood in which $F^2$ is the cone on the $1$-skeleton of a standard tetrahedron.
		\item The orientation of each binding must agree with the orientations of exactly two of its three adjacent facets. The sum of the labels of those two facets must equal the label of the third. 
	\end{itemize}
\end{df}
\begin{rem}
	We will be primarily interested in the underlying stratified space $F^0 \subset F^1 \subset F^2$ of a closed foam together with the labels on its facets, rather than the specific way the foam is embedded in $\R^3$. The only relevant piece of information given by the embedding is the cyclic ordering of the three facets adjacent to each binding (which by convention is given by the left-hand rule). 
\end{rem}

\begin{df}
	An $\sl(N)$ \textit{MOY graph $\Gamma$ (in $\R^2$)} is an oriented trivalent graph embedded in the plane, where self-loops, multiple edges, and circles without vertices are permitted. Each vertex must have both incoming and outgoing edges, which is to say that sources and sinks are not permitted. 
	Each edge $e$ is given a label $\ell(e) \in \{0,\ldots,N\}$ such that at every vertex, the sum of the labels of the incoming edges equals the sum of the labels of the outgoing edges. 
\end{df}

A closed $\sl(N)$ foam $F$ in $\R^3$ is said to \textit{transversely} intersect a given plane in $\R^3$ if the plane is disjoint from all of the singular points of $F$ and transversely intersects each binding and facet. If $F$ intersects a given oriented plane $P$ transversely, then the intersection $\Gamma = F \cap P$ is an $\sl(N)$ MOY graph in $P$. The labels on the edges of $\Gamma$ are inherited from the labels on the facets of $F$, and orientations of edges of $\Gamma$ are induced from those of the facets of $F$ using the orientation of $P$. 

\begin{df}
	Let $F$ be a closed $\sl(N)$ foam in $\R^3$ that intersects both $\R^2 \x 0$ and $\R^2 \x 1$ transversely, and let $\Gamma_0 = F \cap (\R^2 \x 0)$ and $\Gamma_1 = F \cap (\R^2 \x 1)$ be the induced $\sl(N)$ MOY graphs where $\R^2 \x 0$ and $\R^2 \x 1$ are both oriented in the same standard direction. The intersection $F \cap (\R^2 \x [0,1])$ is said to be an \textit{$\sl(N)$ foam from $\Gamma_0$ to $\Gamma_1$}. An $\sl(N)$ foam from $\Gamma_0$ to $\Gamma_1$ also has facets, bindings, singular points, orientations of bindings and facets, and labels, which are defined in the same way as in the closed case $\Gamma_0 = \emp = \Gamma_1$. We use the notation $G\colon \Gamma_0 \to \Gamma_1$ to indicate that $G$ is an $\sl(N)$ foam from $\Gamma_0$ to $\Gamma_1$. Unless explicitly specified to be closed, an $\sl(N)$ foam refers to an $\sl(N)$ foam between $\sl(N)$ MOY graphs. 
\end{df}

\begin{df}
	Let $F$ be an $\sl(N)$ foam. A set of \textit{dots} on $F$ is a finite set of weighted points on the facets of $F$ where the weight $w(d)\in \Z$ of a dot $d$ lying on a facet $f$ must satisfy $1 \leq w(d) \leq \ell(f)$. We let $\bo{F}$ denote an $\sl(N)$ foam equipped with a set of dots, which we call a \textit{dotted} $\sl(N)$ foam. Two dotted foams with the same underlying foam with the same number of dots of each weight on each facet are thought of as the same. Equivalently, we may think of dots as free to move within their facet, as long as they remain disjoint. 
\end{df}

\begin{df}
	Let $R$ be a ring. Let $\RFoam_N$ be the category whose objects are $\sl(N)$ MOY graphs and whose set of morphisms $\RFoam_N(\Gamma_0,\Gamma_1)$ from $\Gamma_0$ to $\Gamma_1$ is the free $R$-module with basis the set of dotted $\sl(N)$ foams $\bo{F}$ from $\Gamma_0$ to $\Gamma_1$, thought of up to isotopy rel boundary. The composition of a dotted foam $\bo{F}\colon \Gamma_0 \to \Gamma_1$ with a dotted foam $\bo{G}\colon \Gamma_1 \to \Gamma_2$ is the dotted foam $\bo{F} \cup \bo{G}\colon \Gamma_0 \to \Gamma_2$ given by gluing along $\Gamma_1$. Composition in $\RFoam_N$ is given by $R$-linearly extending the composition of dotted foams. In particular, composition is $R$-bilinear, so we have $R$-linear composition maps \[
		\RFoam_N(\Gamma_0,\Gamma_1) \otimes \RFoam_N(\Gamma_1,\Gamma_2) \to \RFoam_N(\Gamma_0,\Gamma_2).
	\]
\end{df}

We explain how to view dots as what are called \textit{decorations} in \cite{MR4164001}. Robert--Wagner work with \textit{decorated} foams, which are slightly more general than dotted foams, but it is more convenient for us to work with dotted foams and to view decorated foams as formal linear combinations of dotted foams. Let $f$ be a facet of a dotted foam $\bo{F}$, and suppose it has $k_w$ dots of weight $w$ for $w = 1,\ldots, \ell(f)$. We associate to the facet $f$ the homogeneous symmetric polynomial $P_f$ in $\ell(f)$ variables given by \[
	P_f = \prod_{w=1}^{\ell(f)} (e_w)^{k_w}
\]where $e_w$ is the $w$th elementary symmetric polynomial in $\ell(f)$ variables. Variables associated to different facets are thought of as distinct. Given a dotted foam $\bo{F}$, adding a dot of weight $w$ to the facet $f$ corresponds to multiplying $P_f$ by $e_w$. We denote this new dotted foam by $e_w\bo{F}$. With this notation, any dotted foam may be written as $PF$ where $F$ is the underlying foam without dots and $P = \prod_f P_f$ where $P_f$ is a product of elementary symmetric polynomials in $\ell(f)$ variables. 

\begin{df}
	Let $F\colon\Gamma_0 \to \Gamma_1$ be an $\sl(N)$ foam. Let $R[F]$ be the subring of the polynomial ring \[
		R[(X_f)_1,\ldots,(X_f)_{\ell(f)}\:|\: f\text{ is a facet of }F]
	\]consisting of polynomials that are symmetric in the variables $(X_f)_1,\ldots,(X_f)_{\ell(f)}$ for each $f$. Alternatively, we may view $R[F]$ as the tensor product over all facets \[
		R[F] = \textstyle\bigotimes_f R[(X_f)_1,\ldots,(X_f)_{\ell(f)}]^{\fk S_{\ell(f)}}.
	\]By the fundamental theorem of elementary symmetric polynomials, any polynomial $Q_f \in R[(X_f)_1,\ldots,(X_f)_{\ell(f)}]^{\fk S_{\ell(f)}}$ is a unique $R$-linear combination of products of $e_1,\ldots,e_{\ell(f)}$. We define $Q_fF$ to be the element of $\RFoam_N(\Gamma_0,\Gamma_1)$ given by $R$-linearly extending the actions of $e_1,\ldots,e_{\ell(f)}$, which are given by the addition of dots of the appropriate weight to $f$. We then extend this action to define $QF \in \RFoam_N(\Gamma_0,\Gamma_1)$ for any $Q \in R[F]$.
\end{df}

Lastly, we define the degree of a dotted foam. 

\begin{df}
	Let $F\colon \Gamma_0 \to \Gamma_1$ be an $\sl(N)$ foam. We define the \textit{(quantum) degree} of $F$ to be the following alternating sum \[
		\deg(F) = - \sum_{\text{facets } f} d(f) + \sum_{\substack{\text{interval}\\\text{bindings }i}} d(i) - \sum_{\substack{\text{singular}\\\text{points }p}} d(p)
	\]where \begin{itemize}[noitemsep]
		\item if $f$ is a facet with $\ell(f) = a$, then $d(f) = a(N - a)\chi(f)$ where $\chi$ is the Euler characteristic,
		\item if $i$ is a binding diffeomorphic to an interval (as opposed to a circle) whose adjacent facets are labeled $a,b$, and $a + b$, then $d(i) = ab + (a + b)(N - a - b)$,
		\item if $p$ is a singular point whose adjacent facets are labeled $a,b,c,a + b,b + c,a + b + c$, then $d(p) = ab + bc + ac + (a + b + c)(N - a - b - c)$. 
	\end{itemize}If $\bo{F} = PF$ is a dotted foam where $P \in R[F]$ is homogeneous of ordinary degree $k$, then we define $\deg(\bo{F}) = \deg(F) + 2k$. 
\end{df}

The degree of dotted foams will ultimately define the quantum grading on $\sl(N)$ link homology. If $P$ is a homogeneous polynomial of ordinary degree $k$, we will refer to $2k$ as its \textit{quantum degree}. The degree of dotted foams induces a $\Z$-grading on $\RFoam_N(\Gamma_0,\Gamma_1)$. One may show that degree is additive under composition, so the composition maps in $\RFoam_N$ preserve $\Z$-gradings.

\subsection{Robert--Wagner evaluation}\label{subsec:RobertWagnerEvaluation}

Let $\bo{F}$ be a closed dotted $\sl(N)$ foam. Robert--Wagner \cite{MR4164001} define an \textit{evaluation} of $\bo{F}$\[
	\langle \bo{F}\rangle \in \Z[X_1,\ldots,X_N]^{\fk S_N}
\]which takes the form of a homogeneous symmetric polynomial of quantum degree $\deg(\bo{F})$ in the $N$ variables $X_1,\ldots,X_N$ with integral coefficients. The Robert--Wagner evaluation of closed dotted foams is explicit and is given in terms of colorings of $F$, which we now define. 

\begin{df}
	Let $\P = \{1,\ldots,N\}$ be a set of \textit{pigments}. A \textit{coloring} $c$ of a closed $\sl(N)$ foam $F$ consists of a choice of an $\ell(f)$-element subset $c(f)\subset\P$ for each facet $f$ of $F$, subject to the following constraints. If $f,g,h$ are the three facets adjacent to a given binding where $\ell(f) + \ell(g) = \ell(h)$, then we require that $c(f) \cup c(g) = c(h)$. Note that $c(f)$ and $c(g)$ must therefore be disjoint. 
\end{df}

We note that colorings are defined for foams, independent of dots. Given a closed dotted foam $\bo{F} = PF$ and a coloring $c$ of $F$, Robert--Wagner define a homogeneous rational function in the variables $X_1,\ldots,X_N$ with integral coefficients \[
	\langle \bo{F},c\rangle = (-1)^{s(F,c)} \frac{P(\bo{F},c)}{Q(F,c)}
\]where $P(\bo{F},c)$ is a polynomial, $Q(F,c)$ is a rational function, and $s(F,c)$ is an integer, all defined below. As indicated by our notation, $P(\bo{F},c)$ depends on both the dots and the coloring, whereas $Q(F,c)$ and the sign $s(F,c)$ depend only on the coloring and the underlying closed foam $F$. The Robert--Wagner evaluation is then defined to be \[
	\langle\bo{F}\rangle = \sum_c \langle \bo{F},c\rangle
\]where the sum is over all colorings of $F$. It is shown in \cite[Corollary 2.17]{MR4164001} that $\langle \bo{F}\rangle$ is symmetric and in \cite[Proposition 2.18]{MR4164001} that $\langle \bo{F}\rangle$ is an integral polynomial rather than just a rational function. Its quantum degree is computed to be $\deg(\bo{F})$ in \cite[Lemma 2.14]{MR4164001}. The evaluation is multiplicative under disjoint union. 

The polynomial $P(\bo{F},c)$ is defined to be \[
	P(\bo{F},c) = \prod_f P_f(X_{c(f)}) \in \Z[X_1,\ldots,X_N]
\]where $P_f(X_{c(f)})$ is obtained by replacing the formal variables $(X_f)_1,\ldots,(X_f)_{\ell(f)}$ of $P_f$ with the variables $X_i$ for $i \in c(f)$. Since $P_f$ is symmetric, the polynomial $P_f(X_{c(f)})$ is well-defined. 

The arguments in this paper will not use how the sign $s(F,c)$ is defined, and we will use the definition of $Q(F,c)$ only to see that it is a product of terms of the form $X_i - X_j$ and their inverses. We include their definitions here for the sake of completeness. Given a pigment $i \in \P$ and a coloring $c$ of a closed foam $F$, the \textit{monochrome surface} $F_i(c)$ is the closed orientable surface obtained by taking the union of the facets $f$ of $F$ for which $i \in c(f)$. Given two pigments $i,j\in \P$ where $i < j$, the \textit{bichrome surface} $F_{ij}(c)$ is the closed orientable surface obtained by taking the union of the facets $f$ of $F$ for which exactly one of $i$ and $j$ lies in $c(f)$. In this situation, consider a binding with adjacent facets $f,g,h$ such that $i \in c(f),j\in c(g)$, and $i,j\in c(h)$. The binding is said to be \textit{positive with respect to $i < j$} if the cyclic order (induced by the left-hand rule) of the facets around the binding is $f,g,h$ rather than $g,f,h$. The union of the bindings that are positive with respect to $i < j$ is a collection of simple closed curves in the bichrome surface $F_{ij}(c)$, and the number of such positive simple closed curves is defined to be $\theta_{ij}^+(c)$. With these definitions, we have \[
	Q(F,c) = \prod_{\substack{i,j\in \P\\i<j}} (X_i - X_j)^{\chi(F_{ij}(c))/2} \qquad\qquad s(F,c) = \sum_{i\in \P} i\cdot\frac{\chi(F_i(c))}{2} + \sum_{\substack{i,j\in\P\\i<j}} \theta_{ij}^+(F,c)
\]where $\chi$ denotes Euler characteristic. 

\subsection{The universal construction}\label{subsec:TheUniversalConstruction}

The universal construction \cite[Proposition 1.1]{MR1362791} is a general procedure of using an ``evaluation'' of ``closed'' objects to produce a functor on an associated ``cobordism'' category. In our setup, the closed objects are closed dotted $\sl(N)$ foams, our evaluation is given by the Robert--Wagner evaluation, and our cobordism category is $\RFoam_N$ where $R$ is a fixed ring. We apply the universal construction to obtain a functor $\sr F_N(-;R)\colon\RFoam_N\to\RMod$ where $\RMod$ is the category of $R$-modules. 

There is a unique map $\Z[X_1,\ldots,X_N] \to R$ that sends each indeterminant $X_i$ to $0$. Let $\langle \bo{F}\rangle_R$ denote the image of the Robert--Wagner evaluation $\langle \bo{F}\rangle \in \Z[X_1,\ldots,X_N]$ in $R$ where $\bo{F}$ is a closed dotted foam. For any $\sl(N)$ MOY graph $\Gamma$, the evaluation $\langle - \rangle_R$ defines an $R$-bilinear pairing $\langle -,-\rangle_R$ on $\RFoam_N(\emp,\Gamma)$ given by \[
	\langle \bo{F},\bo{G}\rangle_R = \langle \bo{F} \cup \widebar{\bo{G}}\rangle_R
\]where $\widebar{\bo{G}}\colon \Gamma \to \emp$ is obtained from $\bo{G}\colon \emp \to\Gamma$ by mirroring across a plane. Let $\sr F_N(\Gamma;R)$ be the quotient of $\RFoam_N(\emp,\Gamma)$ by the kernel of this pairing. Explicitly, an $R$-linear combination $\sum_i a_i \bo{F}_i$ represents zero in $\sr F_N(\Gamma;R)$ if and only if for every dotted foam $\bo{G}\colon \emp \to \Gamma$, we have \[
	\textstyle\sum_i a_i \langle \bo{F}_i, \bo{G}\rangle_R = \sum_i a_i \langle \bo{F}_i \cup \widebar{\bo{G}}\rangle_R = 0. 
\]The bilinear form $\langle-,-\rangle_R$ induces a nondegenerate pairing on $\sr F_N(\Gamma;R)$. Furthermore, a dotted foam $\bo{G}\colon \Gamma_0 \to \Gamma_1$ induces an $R$-linear map $\sr F_N(\bo{G};R)\colon \sr F_N(\Gamma_0;R) \to \sr F_N(\Gamma_1;R)$ given by $\bo{F}\mapsto \bo{F}\cup\bo{G}$. 

\begin{rem}\label{rem:mirroringGivesAdjoint}
	If $\bo{G}\colon \Gamma_0 \to \Gamma_1$ is a dotted foam and $\widebar{\bo{G}}\colon \Gamma_1 \to \Gamma_0$ is obtained by mirroring across a plane, then $\sr F_N(\bo{G};R)$ and $\sr F_N(\widebar{\bo{G}};R)$ are adjoint operators \[
		\langle \sr F_N(\bo{G};R)\bo{F},\bo{H}\rangle_R = \langle \bo{F} \cup \bo{G} \cup \widebar{\bo{H}}\rangle_R = \langle\bo{F},\sr F_N(\widebar{\bo{G}};R)\bo{H}\rangle_R.
	\]
\end{rem}

Note that for a closed dotted foam $\bo{F}$, the evaluation $\langle \bo{F}\rangle_R$ will vanish unless $\deg(\bo{F}) = 0$. The $\Z$-grading on $\RFoam_N(\emp,\Gamma)$ therefore descends to a $\Z$-grading on $\sr F_N(\Gamma;R)$ for which $\sr F_N(\bo{G};R)\colon \sr F_N(\Gamma_0;R) \to \sr F_N(\Gamma_1;R)$ is homogeneous of degree $\deg(\bo{G})$ for any dotted foam $\bo{G}\colon \Gamma_0\to \Gamma_1$.

\subsection{MOY calculus}\label{subsec:MOYCalculus}

Murakami, Ohtsuki, and Yamada \cite{MR1659228} associate to each $\sl(N)$ MOY graph $\Gamma$ an integral polynomial $\langle \Gamma\rangle \in \Z[q,q^{-1}]$ with nonnegative coefficients, which is an invariant of $\Gamma$ up to ambient isotopy in $\R^2$ preserving labels. 
If $D$ is a diagram for an oriented link $L$, then the $\sl(N)$ link polynomial of $L$ may be computed from the MOY evaluations of MOY graphs obtained from $D$ by the rules in Figure~\ref{fig:slNpolynomialFromMOY}.
The MOY evaluation is multiplicative under disjoint union and is uniquely determined by a number of local relations which are collectively referred to as \textit{MOY calculus} (see \cite[section 4.2]{MR3234803} for the list of relations and a proof of uniqueness). 

\begin{figure}[!ht]
	\centering \begin{align*}
		\begin{gathered}
		\includegraphics[width=.08\textwidth]{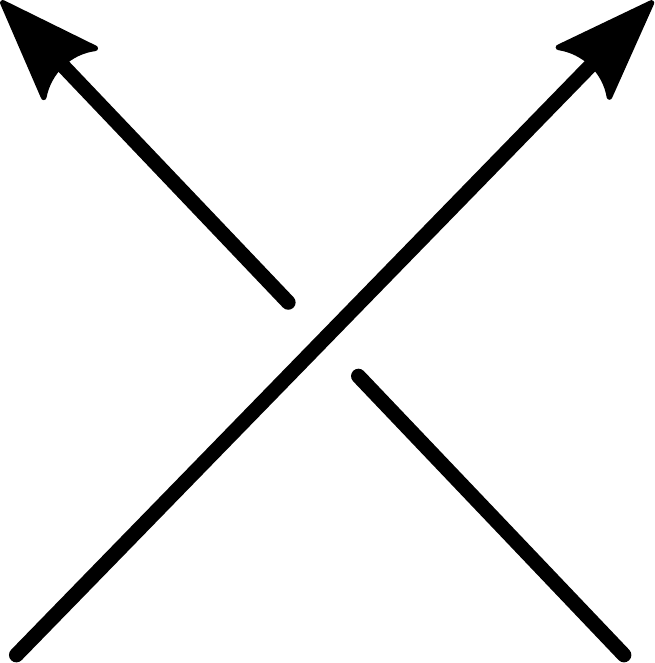}
		\end{gathered} &= q^{N-1}\begin{gathered}
		\includegraphics[width=.08\textwidth]{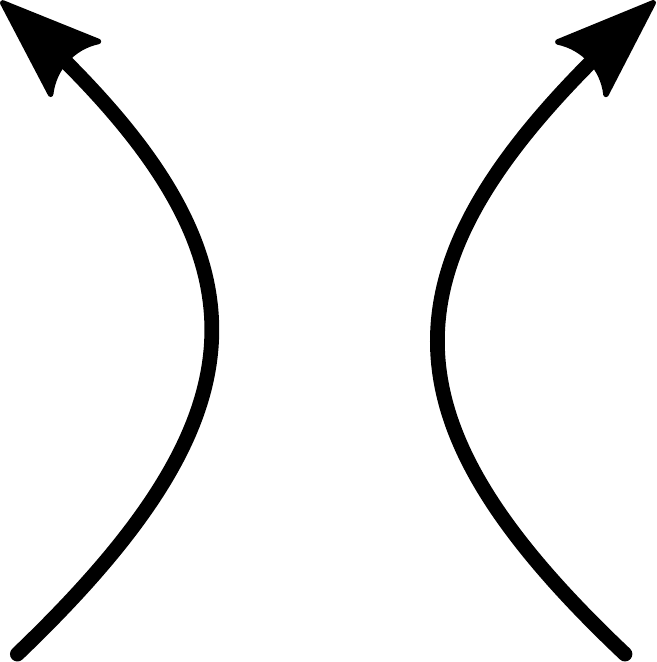}
		\end{gathered} - q^{+N} \begin{gathered}
			\labellist
			\pinlabel $2$ at 130 90
			\endlabellist
			\includegraphics[width=.08\textwidth]{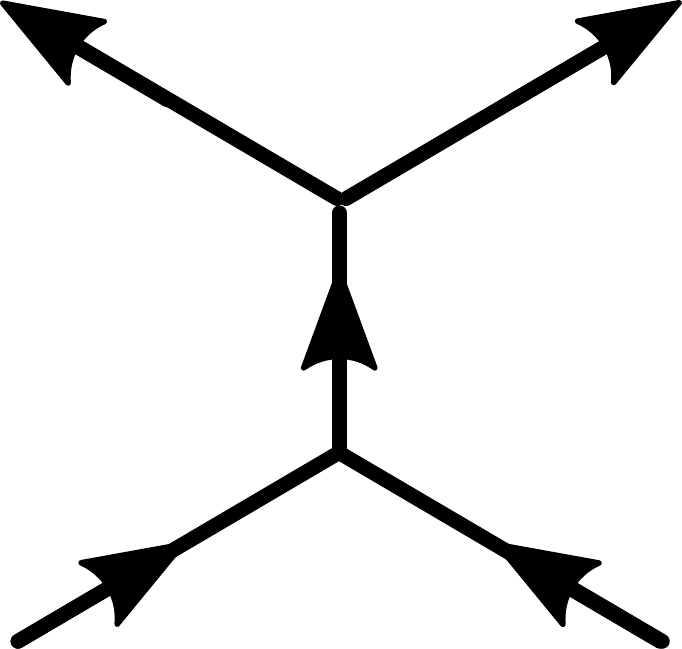}
		\end{gathered}\\
\begin{gathered}
		\includegraphics[width=.08\textwidth]{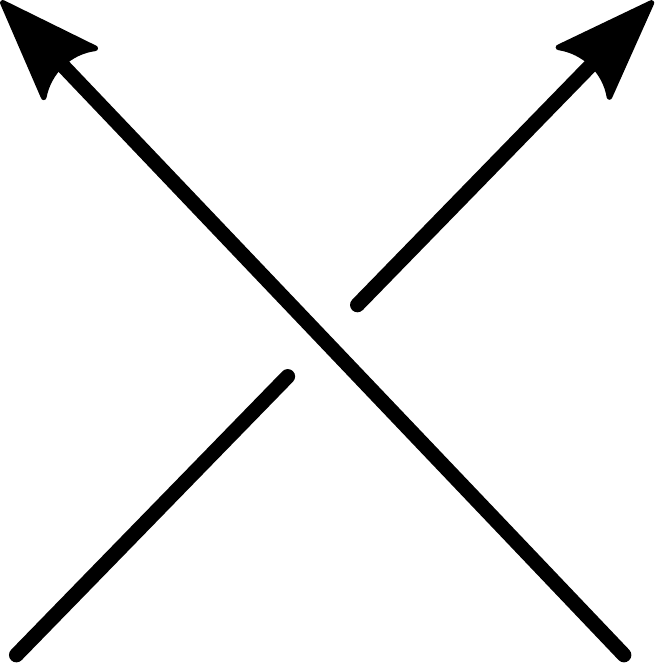}
	\end{gathered} &= q^{1 - N}\begin{gathered}
		\includegraphics[width=.08\textwidth]{orientedResolution.pdf}
	\end{gathered} - q^{-N} \begin{gathered}
		\labellist
			\pinlabel $2$ at 130 90
			\endlabellist
		\includegraphics[width=.08\textwidth]{otherResolution}
	\end{gathered}
	\end{align*}
	\vspace{-10pt}
	\captionsetup{width=.8\linewidth}
	\caption{The $\sl(N)$ link polynomial from the MOY evaluation. An edge of an MOY graph without an explicit label is labeled $1$. }
	\label{fig:slNpolynomialFromMOY}
\end{figure}

For the reader's convenience, we provide a subset of those relations which uniquely determine the MOY evaluation for $\sl(N)$ MOY graphs with edges labeled $1$ or $2$ (see for example \cite[Figure 3]{MR2491657}). For each integer $k \ge 0$, let the \textit{quantum integer} $[k] \in \Z[q,q^{-1}]$ be \[
	[k] = \frac{q^k - q^{-k}}{q - q^{-1}} = q^{-(k-1)} + q^{-(k-3)} + \cdots + q^{k-3} + q^{k-1}
\]and for each integer $n \ge 0$, let the \textit{quantum binomial coefficient} $\qbinom{n}{k} \in \Z[q,q^{-1}]$ be \[
	\qbinom{n}{k} = \begin{cases}
		\displaystyle\frac{[n]!}{[k]![n-k]!} & 0 \leq k \leq n\\
		0 & \text{else}
	\end{cases}
\]where $[k]! = [k]\cdots[2][1]$. 

\theoremstyle{plain}
\newtheorem*{MOYCalc}{MOY Calculus \normalfont{(for graphs labeled by 1 and 2)}}

\begin{MOYCalc}
	The MOY evaluation $\langle \Gamma\rangle\in\Z[q,q^{-1}]$ of an $\sl(N)$ MOY graph whose edges are labeled $1$ or $2$ is determined by the local relations in Figure~\ref{fig:MOYcalculus}. 
\end{MOYCalc}

\begin{figure}[!ht]
	\centering
	\[
		\Big\langle\,\begin{gathered}
			\labellist
			\pinlabel $1$ at 120 100
			\endlabellist
			\includegraphics[width=.04\textwidth]{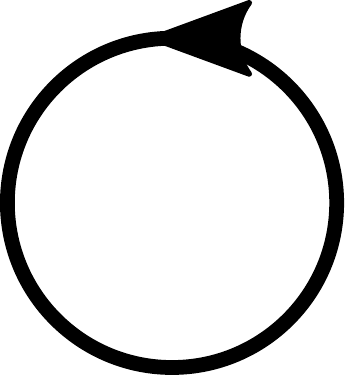}
		\end{gathered}\:\:\:\:\Big\rangle = \Big\langle\,\begin{gathered}
			\labellist
			\pinlabel $1$ at 120 100
			\endlabellist
			\includegraphics[width=.04\textwidth]{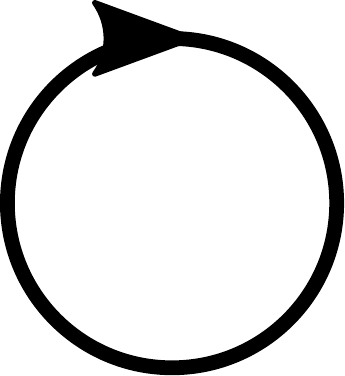}
		\end{gathered}\:\:\:\:\Big\rangle = [N] \:\Big\langle \qquad\Big\rangle \qquad\qquad \Big\langle\,\begin{gathered}
			\labellist
			\pinlabel $2$ at 120 100
			\endlabellist
			\includegraphics[width=.04\textwidth]{counterclockwiseCircle}
		\end{gathered}\:\:\:\:\Big\rangle = \Big\langle\,\begin{gathered}
			\labellist
			\pinlabel $2$ at 120 100
			\endlabellist
			\includegraphics[width=.04\textwidth]{clockwiseCircle}
		\end{gathered}\:\:\:\Big\rangle = \qbinom{N}{2}\:\Big\langle \qquad\Big\rangle
	\]\[
		\Bigg\langle \:\begin{gathered}
			\labellist
			\pinlabel $2$ at 90 20
			\pinlabel $2$ at 90 220
			\endlabellist
			\includegraphics[width=.045\textwidth]{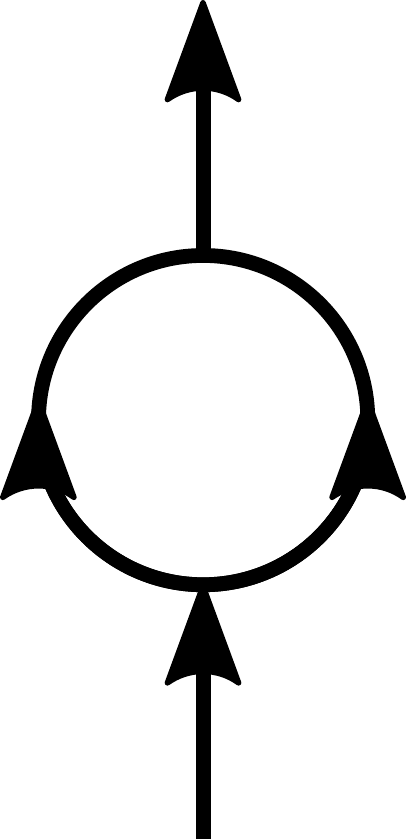}
		\end{gathered} \:\:\:\Bigg\rangle = [2]\: \Bigg\langle\:\: \begin{gathered}
			\labellist
			\pinlabel $2$ at 40 20
			\endlabellist
			\includegraphics[width=.0085\textwidth]{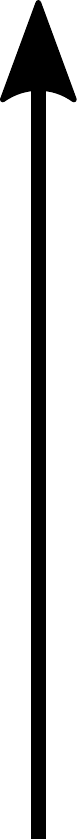}
		\end{gathered} \:\:\:\Bigg\rangle \qquad \qquad\qquad \Bigg\langle \:\begin{gathered}
			\labellist
			\pinlabel $2$ at 145 120
			\endlabellist
			\includegraphics[width=.045\textwidth]{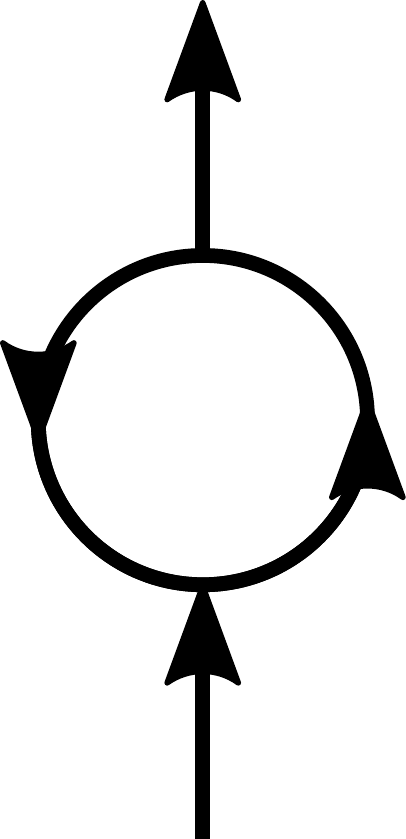}
		\end{gathered} \:\:\:\Bigg\rangle = [N-1]\: \Bigg\langle\:\: \begin{gathered}
			\includegraphics[width=.0085\textwidth]{verticalMOY}
		\end{gathered} \:\:\:\Bigg\rangle = \Bigg\langle \:\:\:\begin{gathered}
			\labellist
			\pinlabel $2$ at -25 120
			\endlabellist
			\includegraphics[width=.045\textwidth]{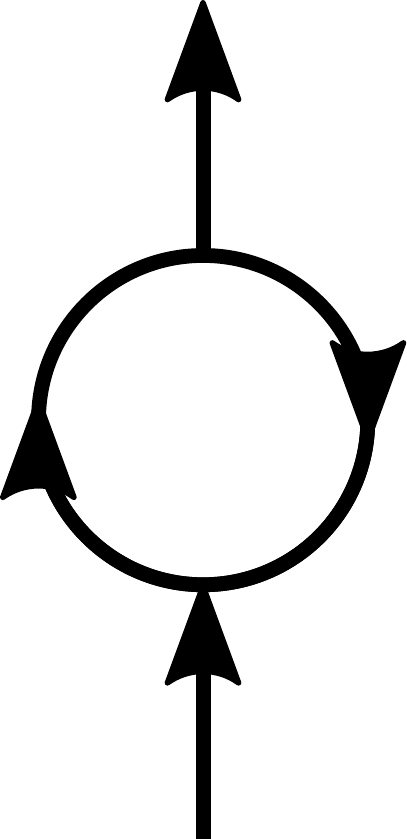}
		\end{gathered} \:\Bigg\rangle
	\]\[
		\Bigg\langle\: \begin{gathered}
			\labellist
			\pinlabel $2$ at 10 100
			\pinlabel $2$ at 190 100
			\endlabellist
			\includegraphics[width=.08\textwidth]{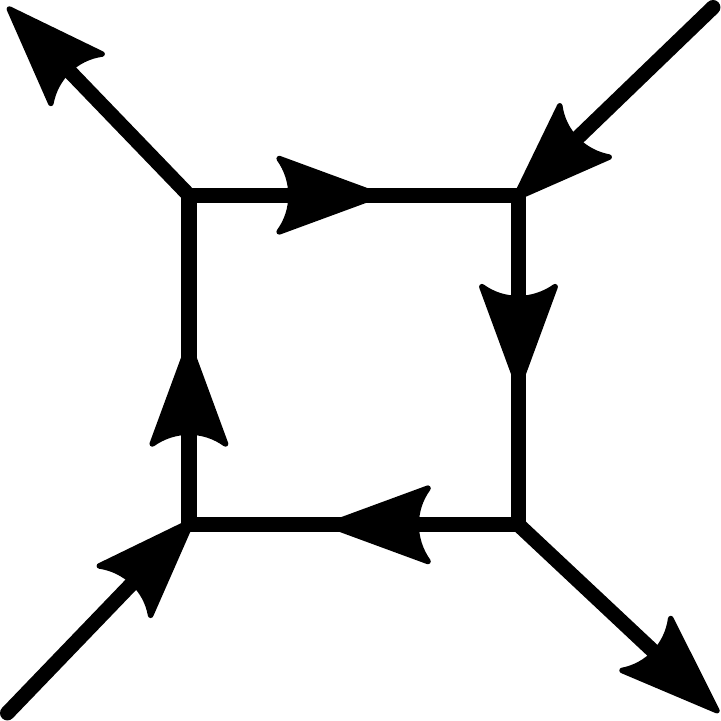}
		\end{gathered} \:\:\Bigg\rangle = \Bigg\langle\: \begin{gathered}
			\includegraphics[width=.08\textwidth]{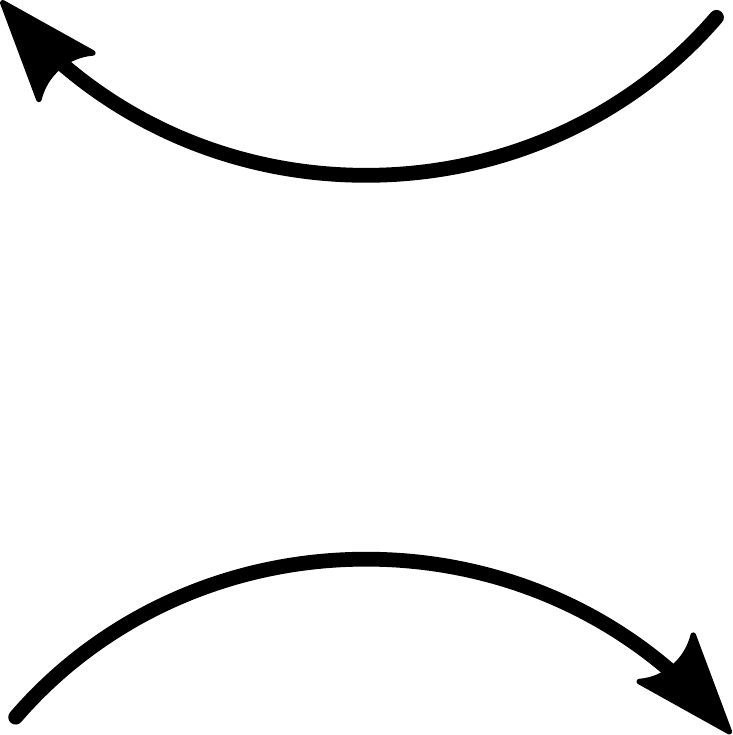}
		\end{gathered} \:\:\Bigg\rangle + [N-2] \:\Bigg\langle \: \begin{gathered}
			\includegraphics[width=.08\textwidth]{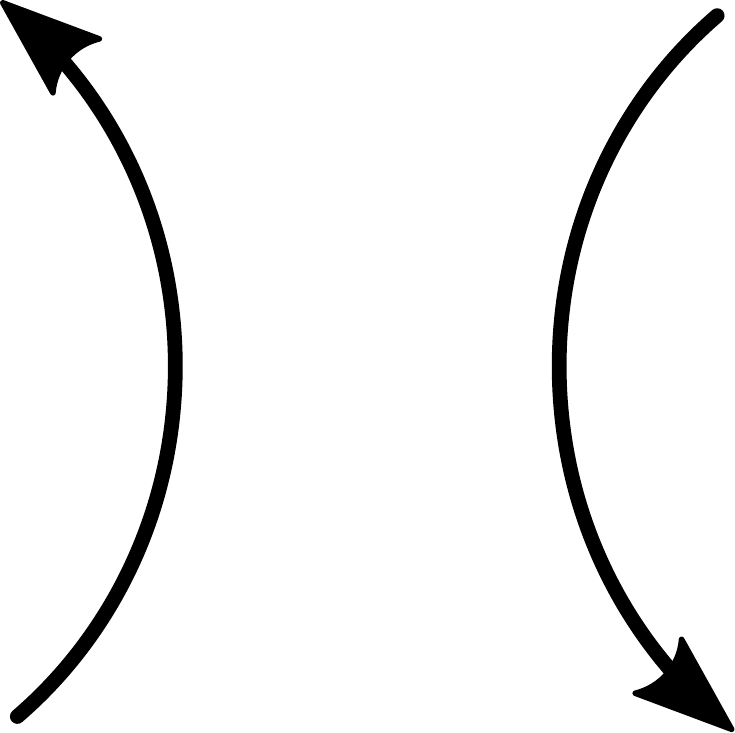}
		\end{gathered}\:\:\Bigg\rangle
	\]\[
		\Bigg\langle\:\:\begin{gathered}
			\labellist
			\pinlabel $2$ at 110 140
			\pinlabel $2$ at 110 260
			\pinlabel $2$ at 300 190
			\endlabellist
			\includegraphics[width=.12\textwidth]{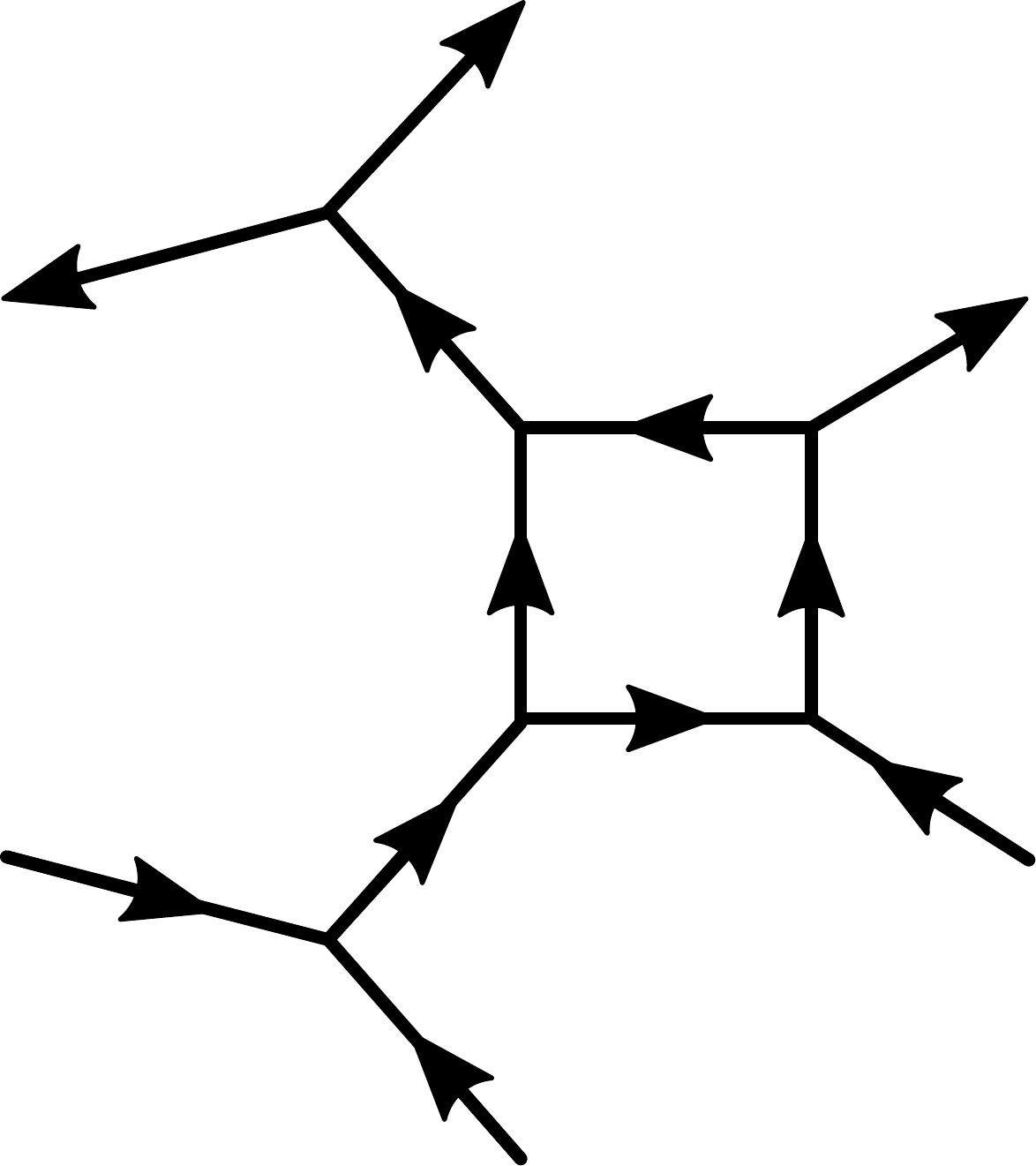}
		\end{gathered}\:\:\:\Bigg\rangle + \Bigg\langle\:\:\begin{gathered}
			\labellist
			\pinlabel $2$ at 310 190
			\endlabellist
			\includegraphics[width=.12\textwidth]{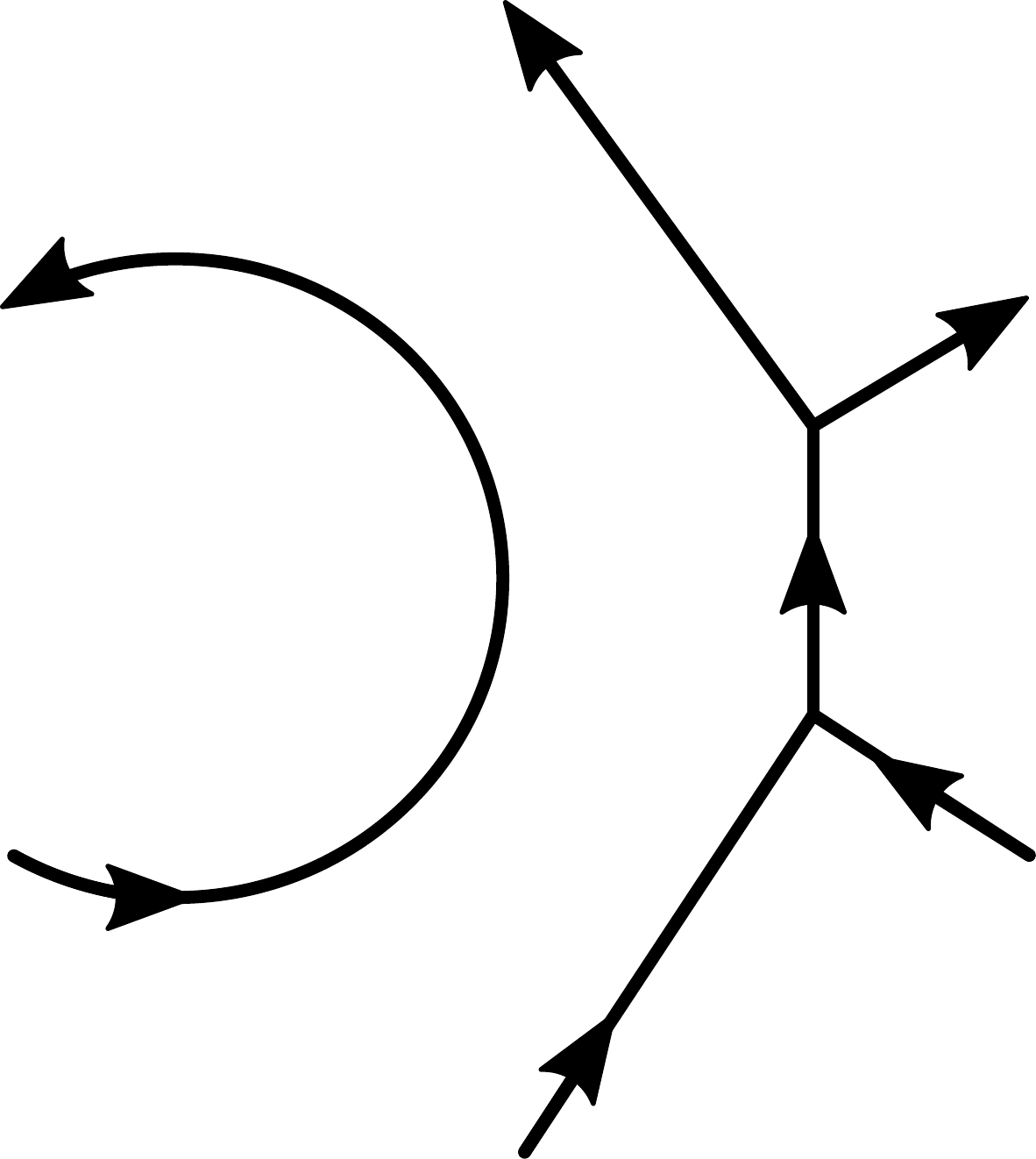}
		\end{gathered}\:\:\:\Bigg\rangle = \Bigg\langle\:\: \begin{gathered}
			\labellist
			\pinlabel $2$ at 35 190
			\pinlabel $2$ at 232 140
			\pinlabel $2$ at 232 260
			\endlabellist
			\includegraphics[width=.12\textwidth]{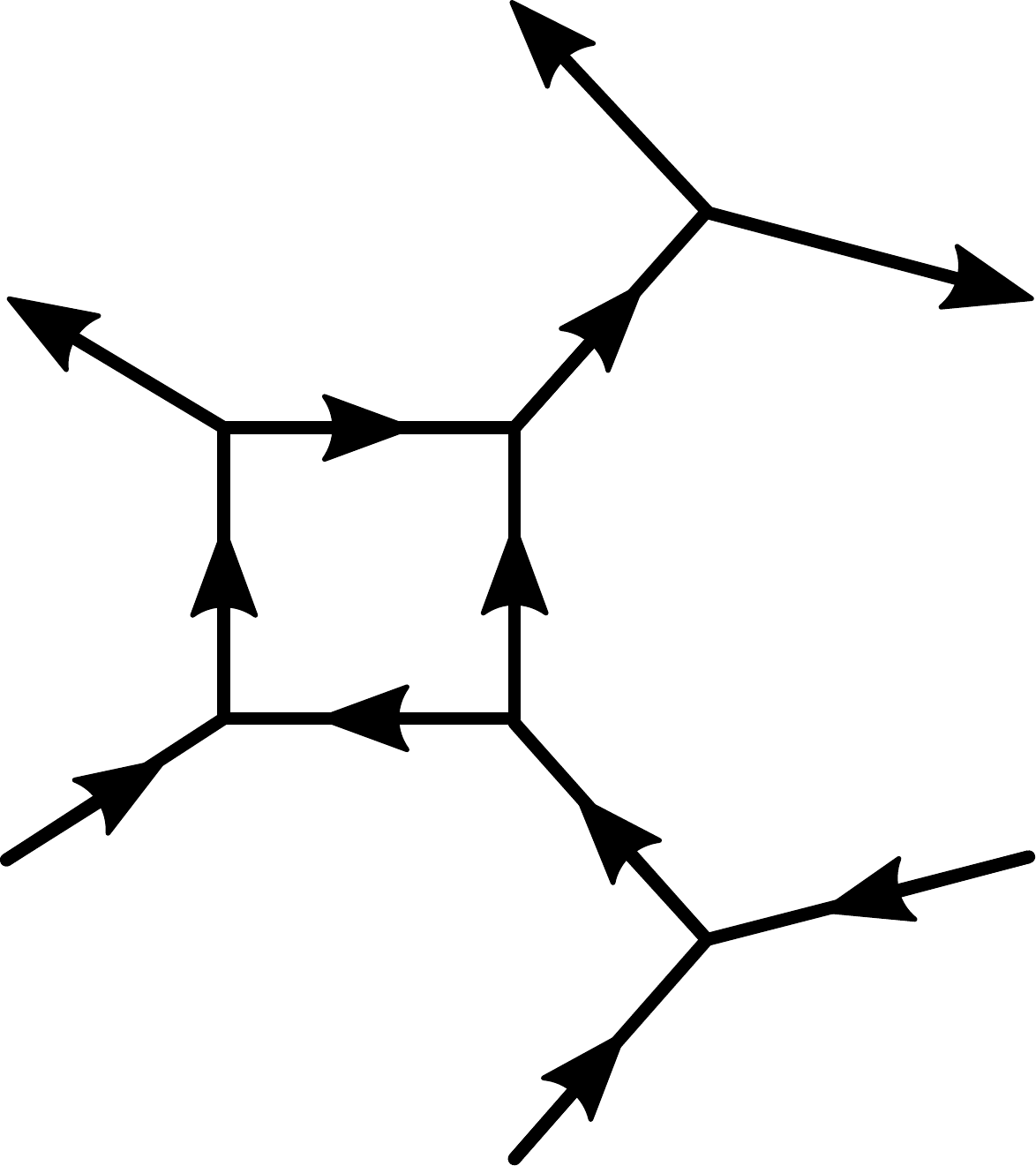}
		\end{gathered}\:\:\:\Bigg\rangle + \Bigg\langle\:\:\begin{gathered}
			\labellist
			\pinlabel $2$ at 35 190
			\endlabellist
			\includegraphics[width=.12\textwidth]{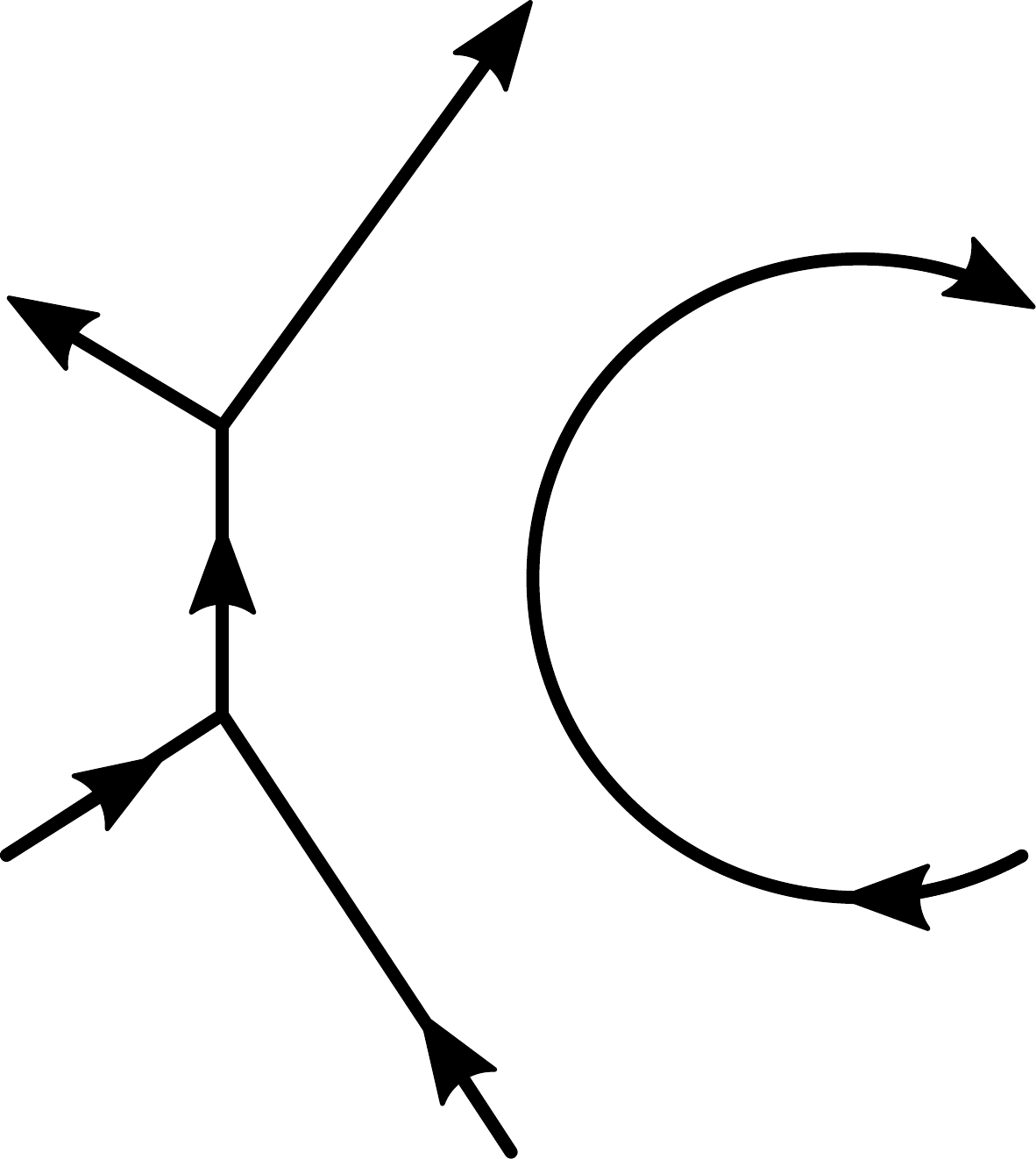}
		\end{gathered}\:\:\:\Bigg\rangle
	\]
	\vspace{-10pt}
	\captionsetup{width=.8\linewidth}
	\caption{Local relations in MOY calculus. An edge of an MOY graph without an explicit label is labeled $1$.}
	\label{fig:MOYcalculus}
\end{figure}

The following theorem is a consequence of the main result of \cite{MR4164001}. The \textit{$q$-graded rank} of a $\Z$-graded finitely generated free $R$-module $V = \bigoplus_i V_i$ is $\sum_i \rk(V_i)q^i \in \Z[q,q^{-1}]$. 

\begin{thm}[{\cite[Theorem 3.30]{MR4164001}}]
	For any $\sl(N)$ MOY graph $\Gamma$, the $\Z$-graded abelian group $\sr F_N(\Gamma;\Z)$ is finitely generated and free. Its $q$-graded rank is $\langle \Gamma \rangle \in \Z[q,q^{-1}]$. 
\end{thm}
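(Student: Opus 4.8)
The plan is to run the standard ``universal construction'' strategy: produce an explicit spanning set of foams for $\sr F_N(\Gamma;\Z)$ whose graded size matches $\langle\Gamma\rangle$, and then show that set is a $\Z$-basis by exhibiting a dual spanning set so that the Gram matrix of the pairing $\langle-,-\rangle_\Z$ on $\RFoam_N(\emp,\Gamma)$ is invertible over $\Z$. The invariant $\langle\Gamma\rangle$ is characterized by being multiplicative under disjoint union, additive under the MOY calculus decomposition relations, and equal to $\qbinom{N}{a}$ on a single circle labeled $a$; since the $q$-graded rank of a finitely generated free graded module has exactly the same multiplicativity and additivity properties, it suffices to verify (i) the base case and (ii) that each MOY calculus relation lifts to a direct sum decomposition of state spaces over $\Z$.

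For the base case, let $C_a$ be a circle labeled $a$. Evaluating dotted cup--cap foams with the Robert--Wagner formula identifies $\sr F_N(C_a;\Z)$ with the quotient of $\Z[(X)_1,\dots,(X)_a]^{\fk S_a}$ by the ideal generated by the complete homogeneous symmetric polynomials $h_{N-a+1},\dots,h_N$, i.e.\ with $H^*(\G(a,N);\Z)$. This ring is free of $q$-graded rank $\qbinom{N}{a}$ on the Schur polynomials $s_\lambda$ with $\lambda$ fitting in an $a\times(N-a)$ box, and the induced pairing is the Poincaré duality pairing, whose Gram matrix in the Schur basis is unimodular. Disjoint unions are handled by the multiplicativity of the evaluation, which gives $\sr F_N(\Gamma_0\sqcup\Gamma_1;\Z)\cong\sr F_N(\Gamma_0;\Z)\otimes_\Z\sr F_N(\Gamma_1;\Z)$.

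For the inductive step, I would fix a complexity measure on MOY graphs (lexicographic in the number of vertices and the multiset of edge labels, say) and use that MOY calculus reduces any $\Gamma$, through local moves supported in a disk, to a $\Z[q,q^{-1}]$-combination of disjoint unions of labeled circles. Each such move --- the circle-evaluation relations, the bigon relations, the square relation, and the associativity relation --- is to be promoted to an isomorphism $\sr F_N(\Gamma;\Z)\cong\bigoplus_i\sr F_N(\Gamma_i;\Z)$ up to grading shifts, with structure maps given by explicit dotted foams, the identities ``projection $\circ$ inclusion $=$ identity'' and ``inclusion $\circ$ projection $=$ identity'' being checked by evaluating the resulting closed foams, which are supported in a ball and computed by the combinatorial formula. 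Granting these decompositions, an induction on complexity yields that $\sr F_N(\Gamma;\Z)$ is a finite direct sum of graded shifts of tensor products of the rings $H^*(\G(a,N);\Z)$, hence finitely generated and free, with $q$-graded rank obeying exactly the recursion defining $\langle\Gamma\rangle$, so the two agree.

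The main obstacle is the integral bookkeeping. Over $\Q$ it would be enough to match graded dimensions, which follows immediately by decategorifying, but over $\Z$ one must know the local decompositions hold without denominators --- equivalently, that the relevant Gram matrices of $\langle-,-\rangle_\Z$ are invertible over $\Z$ rather than merely over $\Q$. This is where the explicit Schur-polynomial bases, the unimodularity of Poincaré duality on $H^*(\G(a,N);\Z)$, and the integrality of the Robert--Wagner evaluation (\cite[Proposition 2.18]{MR4164001}) do the real work: verifying that the normalizing coefficients in the local foam relations are units and that the spanning sets of foams one writes down are genuinely $\Z$-bases is the technical heart, which is precisely the content of \cite[Theorem 3.30]{MR4164001}.
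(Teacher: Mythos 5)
A point of context first: the paper does not actually prove this statement; it is imported wholesale from Robert--Wagner as \cite[Theorem 3.30]{MR4164001}, the only indications of its proof being the remark that it ``involves categorifying MOY calculus'' and the description, inside the proof of Corollary~\ref{cor:perfectPairing}, of what that proof produces: $\Z$-linear combinations of dotted foams inducing inverse isomorphisms between $\sr F_N(\Gamma;\Z)$ and a direct sum of grading shifts of $\sr F_N(\emp;\Z)\cong\Z$. Your outline follows the same strategy in spirit --- universal construction, explicit foam-induced local decompositions, induction on a complexity measure, with integrality tracked through the combinatorial evaluation --- the only structural difference being that you stop the reduction at labeled circles and invoke unimodularity of the Poincar\'e pairing on $H^*(\G(a,N);\Z)$, whereas the cited argument decomposes all the way down to shifted copies of $\Z$.

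As a standalone argument, however, your proposal has a genuine gap, and your last paragraph essentially concedes it. The inductive step rests on the claim that the circle, bigon, square, and associativity moves reduce an arbitrary $\sl(N)$ MOY graph to a $\Z[q,q^{-1}]$-combination of disjoint unions of labeled circles; but the relations displayed in Figure~\ref{fig:MOYcalculus} are a sufficient set only for graphs labeled by $1$ and $2$, and for general labels both the identification of a sufficient family of local relations and the verification that the corresponding foam maps give direct-sum decompositions \emph{over $\Z$}, with no denominators, are precisely the substance of Robert--Wagner's theorem. Deferring that verification to ``the content of \cite[Theorem 3.30]{MR4164001}'' makes the argument circular: what you have is an accurate roadmap of the cited proof, not a proof. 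Note also that your base case cannot be borrowed from Proposition~\ref{prop:KRNofUnknotIsCohOfGkN} of this paper, since its proof already uses the rank statement of the theorem (via Corollary~\ref{cor:FNwithRCoefficients}); the identification of the circle's state space with $H^*(\G(a,N);\Z)$, including the fact that cup foams decorated by Schur polynomials span integrally, needs an independent argument of exactly the kind whose absence is the gap above.
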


Robert--Wagner's proof involves categorifying MOY calculus and implies the following corollary (see \cite[Remark 3.31]{MR4164001}).

\begin{cor}\label{cor:perfectPairing}
	The nondegenerate pairing $\langle-,-\rangle_\Z$ on $\sr F_N(\Gamma;\Z)$ is a perfect pairing, which is to say that the injective map \[
		\sr F_N(\Gamma;\Z) \to \Hom_\Z(\sr F_N(\Gamma;\Z),\Z) \qquad \bo{F} \mapsto (\bo{G}\mapsto \langle \bo{F},\bo{G}\rangle_\Z)
	\]is an isomorphism. 
\end{cor}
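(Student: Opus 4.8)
The plan is to deduce the corollary directly from the stated Theorem~\cite[Theorem 3.30]{MR4164001}, which asserts that $\sr F_N(\Gamma;\Z)$ is finitely generated and free with $q$-graded rank $\langle\Gamma\rangle$. The key observation is that a nondegenerate symmetric bilinear pairing on a finitely generated free $\Z$-module need not be perfect in general, so we must use more than abstract freeness: we will compare $q$-graded ranks on both sides of the map in question, exploiting the fact that the pairing respects the $\Z$-grading (it pairs $\sr F_N(\Gamma;\Z)_i$ with $\sr F_N(\Gamma;\Z)_{-i}$ nontrivially only when the degrees are opposite, by the remark that $\langle\bo F\rangle_\Z$ vanishes unless $\deg(\bo F)=0$).

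First I would observe that $\langle\Gamma\rangle \in \Z[q,q^{-1}]$ is palindromic, i.e.\ invariant under $q\mapsto q^{-1}$; this is part of MOY calculus (each defining relation in Figure~\ref{fig:MOYcalculus} involves only palindromic quantum integers and binomials, and the circle/bigon/square relations are symmetric under $q\mapsto q^{-1}$). Consequently $\rk \sr F_N(\Gamma;\Z)_i = \rk \sr F_N(\Gamma;\Z)_{-i}$ for all $i$. The graded dual $\Hom_\Z(\sr F_N(\Gamma;\Z),\Z)$, where the degree-$i$ part consists of maps sending $\sr F_N(\Gamma;\Z)_{-i}$ to $\Z$ and the rest to $0$, is therefore again finitely generated free with the same $q$-graded rank. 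Thus the injective graded $\Z$-linear map $\bo F \mapsto (\bo G \mapsto \langle\bo F,\bo G\rangle_\Z)$ is a degree-preserving injection between free $\Z$-modules of the same finite rank in each degree.

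Next I would promote this rank equality into an isomorphism. An injective map of finitely generated free $\Z$-modules of equal rank is an isomorphism if and only if its cokernel is torsion-free (equivalently, the determinant of a matrix representing it is $\pm 1$). To see the cokernel is trivial, I would tensor with an arbitrary field $\F$: the pairing $\langle-,-\rangle_\F$ on $\sr F_N(\Gamma;\F)$ is nondegenerate — this follows because Robert--Wagner's construction of $\sr F_N(\Gamma;R)$ works over any ring $R$ and is compatible with base change (the quotient-by-the-kernel-of-the-pairing is taken after extending scalars, and $\sr F_N(\Gamma;\Z)\otimes\F \cong \sr F_N(\Gamma;\F)$ since the integral module is free of the correct rank, matching $\langle\Gamma\rangle$ evaluated via the field-coefficient theory). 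A nondegenerate pairing on a finite-dimensional $\F$-vector space is automatically perfect, so the induced map $\sr F_N(\Gamma;\Z)\otimes\F \to \Hom_\F(\sr F_N(\Gamma;\F),\F)$ is an isomorphism for every field $\F$. Applying this over $\F = \Q$ and over $\F = \Z/p$ for every prime $p$ forces the cokernel of the integral map to be both finite and torsion-free, hence zero.

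The main obstacle I anticipate is justifying the base-change compatibility $\sr F_N(\Gamma;\Z)\otimes_\Z R \cong \sr F_N(\Gamma;R)$ cleanly, since the universal construction defines $\sr F_N(\Gamma;R)$ as a quotient of a free module by the kernel of a pairing, and quotients do not commute with tensor products in general. The way around this is precisely the content of \cite[Theorem 3.30]{MR4164001}: because $\sr F_N(\Gamma;\Z)$ is \emph{free} of rank $\langle\Gamma\rangle(1)$ (and $q$-graded rank $\langle\Gamma\rangle$), and because the Robert--Wagner evaluation has integral coefficients so the Gram matrix of $\langle-,-\rangle_\Z$ in a $\Z$-basis has integer entries, one checks that this same Gram matrix computes $\langle-,-\rangle_R$ after reduction mod the defining ideal of $R$; the freeness over $\Z$ together with the matching of ranks over fields (which Robert--Wagner establish via categorified MOY calculus) is exactly what prevents any collapse. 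Once this is in place, the determinant-$\pm1$ argument above is routine.
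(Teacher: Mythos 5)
There is a genuine gap, and it sits exactly where you flagged it: the base-change step. Your argument needs the mod-$p$ reduction of the integral Gram pairing to be nondegenerate for every prime $p$, and you propose to get this by identifying $\sr F_N(\Gamma;\Z)\otimes\F$ with $\sr F_N(\Gamma;\F)$. But in this paper that identification (Corollary~\ref{cor:FNwithRCoefficients}) is \emph{deduced from} Corollary~\ref{cor:perfectPairing}: the natural map $\nu_\Gamma\colon\sr F_N(\Gamma;\Z)\otimes R\to\sr F_N(\Gamma;R)$ is easily seen to be surjective, but its injectivity is proved there using the dual bases supplied by the perfect pairing. Failure of perfectness at a prime $p$ would manifest precisely as a kernel of $\nu_\Gamma$ for $R=\Z/p$ (an integral class not divisible by $p$ all of whose pairings are divisible by $p$), so asserting that "the same Gram matrix computes $\langle-,-\rangle_R$ after reduction'' and that "freeness plus matching ranks prevents any collapse'' is circular as written: the nondegeneracy of the reduced Gram matrix is equivalent to the statement being proved. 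The only way to rescue your route is to import a rank (dimension) statement for $\sr F_N(\Gamma;\F)$ over every field $\F$ as an independent input; the theorem quoted in the paper is stated over $\Z$ only, and you do not supply such an input. (Your earlier observations are fine: you correctly note that nondegeneracy over $\Z$ does not imply perfectness, and palindromicity of $\langle\Gamma\rangle$ does give equal ranks in degrees $\pm i$; the problem is solely the unit-determinant step.)

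For comparison, the paper closes this gap without any field comparison or base change: it uses Robert--Wagner's categorified MOY calculus to produce inverse isomorphisms between $\sr F_N(\Gamma;\Z)$ and a finite direct sum of grading-shifted copies of $\sr F_N(\emp;\Z)\cong\Z$, where all the maps involved are induced by $\Z$-linear combinations of dotted foams and hence admit adjoints by mirroring (Remark~\ref{rem:mirroringGivesAdjoint}). Perfectness transfers across adjoint-compatible inverse isomorphisms and across direct sums, and it holds trivially for $\sr F_N(\emp;\Z)\cong\Z$ with multiplication as the pairing, so it holds for $\sr F_N(\Gamma;\Z)$. That reduction to the empty graph is the key idea your proposal is missing; with it, no appeal to ranks over fields is needed at all.
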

\begin{proof}
	Suppose $V,W$ are abelian groups equipped with nondegenerate pairings, and suppose $f\colon V \to W$ and $g\colon W \to V$ are inverse isomorphisms. If $f$ and $g$ admit adjoints $f^\dagger$ and $g^\dagger$ with respect to the pairings, then the pairing on $V$ is perfect if and only if the pairing on $W$ is perfect. Indeed, it is straightforward to verify that the diagram \[
		\begin{tikzcd}
			V \ar[r] \ar[d,"g^\dagger"] & \Hom_\Z(V,\Z) \ar[d,"-\circ g"]\\
			W \ar[r] & \Hom_\Z(W,\Z)
		\end{tikzcd}
	\]commutes. Surjectivity of $V \to \Hom_\Z(V,\Z)$ then implies surjectivity of $W \to \Hom_\Z(W,\Z)$. 

	If $V$ is a finite direct sum $V = \bigoplus_\alpha V_\alpha$ and the pairing on $V$ is induced from nondegenerate pairings on the groups $V_\alpha$, then the pairing on $V$ is perfect if and only if the pairing on each $V_\alpha$ is perfect. Suppose $f\colon V \to W$ and $g\colon W \to V$ are given by maps $f_\alpha\colon V_\alpha \to W$ and $g_\alpha\colon W \to V_\alpha$. Then $f$ and $g$ admit adjoints if each $f_\alpha$ and $g_\alpha$ admits an adjoint. 

	For each $\sl(N)$ MOY graph $\Gamma$, Robert and Wagner construct maps $f_\alpha\colon V_\alpha \to \sr F_N(\Gamma;\Z)$ and $g_\alpha\colon \sr F_N(\Gamma;\Z) \to V_\alpha$ such that \begin{itemize}[noitemsep]
		\item each $V_\alpha$ is just $\sr F_N(\emp;\Z)$ with a shift in $\Z$-grading,
		\item the induced maps $\bigoplus_\alpha V_\alpha \to \sr F_N(\Gamma;\Z)$ and $\sr F_N(\Gamma;\Z) \to \bigoplus_\alpha V_\alpha$ are inverse isomorphisms, 
		\item each $f_\alpha$ and $g_\alpha$ is induced by a $\Z$-linear combination of dotted foams. 
	\end{itemize}Maps induced by dotted foams have adjoints (Remark~\ref{rem:mirroringGivesAdjoint}), so each $f_\alpha$ and $g_\alpha$ also admits an adjoint. The pairing on each $V_\alpha$ is perfect by explicitly computing that $\sr F_N(\emp;\Z) \cong \Z$ where the pairing is given by multiplication in $\Z$. It follows that the pairing on $\sr F_N(\Gamma;\Z)$ is perfect. 
\end{proof}

\begin{cor}\label{cor:FNwithRCoefficients}
	The $R$-linear map $\nu_\Gamma\colon\sr F_N(\Gamma;\Z) \otimes R \to \sr F_N(\Gamma;R)$ defined by $\bo{F}\otimes 1 \mapsto \bo{F}$ is a natural isomorphism. Hence, $\sr F_N(\Gamma;R)$ is finitely generated and free with $q$-graded rank $\langle \Gamma\rangle \in \Z[q,q^{-1}]$. The pairing on $\sr F_N(\Gamma;R)$ is also perfect. 
\end{cor}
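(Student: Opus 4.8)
The plan is to deduce the corollary from the perfect pairing on $\sr F_N(\Gamma;\Z)$ (Corollary~\ref{cor:perfectPairing}) by extension of scalars. Once the evaluation pairing is known to be \emph{perfect} over $\Z$, the isomorphism $\nu_\Gamma$ is essentially formal: the only real input is that perfectness of a pairing on a finitely generated free module, unlike mere nondegeneracy, survives tensoring with an arbitrary ring.

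First I would set $V=\ZFoam_N(\emp,\Gamma)$, the free $\Z$-module on dotted foams $\emp\to\Gamma$, so that $\RFoam_N(\emp,\Gamma)=V\otimes_\Z R$ canonically. Because the map $\Z[X_1,\dots,X_N]\to R$ that kills every $X_i$ factors through $\Z$, the value $\langle\bo F\rangle_R$ is the image in $R$ of the integer $\langle\bo F\rangle_\Z$, so the form $\langle-,-\rangle_R$ on $V\otimes_\Z R$ is obtained from $\langle-,-\rangle_\Z$ on $V$ by extending scalars. Writing $\pi\colon V\to\sr F_N(\Gamma;\Z)$ for the quotient by the kernel $V^{\perp}$ of $\langle-,-\rangle_\Z$, the form on $V$ is the pullback along $\pi$ of the induced pairing on $\sr F_N(\Gamma;\Z)$, which by Corollary~\ref{cor:perfectPairing} is perfect and lives on a finitely generated free $\Z$-module. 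Applying $-\otimes_\Z R$, the form $\langle-,-\rangle_R$ on $\RFoam_N(\emp,\Gamma)=V\otimes_\Z R$ is the pullback along $\pi\otimes R$ of the base change to $R$ of that perfect pairing.

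The key step is then to observe that the base-changed pairing on $\sr F_N(\Gamma;\Z)\otimes_\Z R$ is again perfect, hence nondegenerate: since $\sr F_N(\Gamma;\Z)$ is finitely generated free, tensoring the isomorphism $\sr F_N(\Gamma;\Z)\iso\Hom_\Z(\sr F_N(\Gamma;\Z),\Z)$ of Corollary~\ref{cor:perfectPairing} with $R$ and using the canonical identification $\Hom_\Z(\sr F_N(\Gamma;\Z),\Z)\otimes_\Z R\cong\Hom_R(\sr F_N(\Gamma;\Z)\otimes_\Z R,R)$ yields the isomorphism defining that base-changed pairing. Consequently the kernel of $\langle-,-\rangle_R$ on $\RFoam_N(\emp,\Gamma)$, being the preimage under the surjection $\pi\otimes R$ of the zero radical, is exactly $\ker(\pi\otimes R)$; and by right-exactness of $-\otimes_\Z R$ applied to $0\to V^{\perp}\to V\to\sr F_N(\Gamma;\Z)\to 0$, this kernel is the image of $V^{\perp}\otimes_\Z R$. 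Therefore $\sr F_N(\Gamma;R)=\RFoam_N(\emp,\Gamma)/\ker(\pi\otimes R)\cong\sr F_N(\Gamma;\Z)\otimes_\Z R$, and chasing representatives $\bo F\otimes 1\mapsto\bo F$ identifies this isomorphism with $\nu_\Gamma$. The remaining assertions are then immediate: $\sr F_N(\Gamma;R)\cong\sr F_N(\Gamma;\Z)\otimes_\Z R$ is finitely generated free with the same graded ranks, hence $q$-graded rank $\langle\Gamma\rangle$; its pairing, being the base change of a perfect one, is perfect; and naturality of $\nu_\Gamma$ with respect to foams is routine, since every induced map is $\bo F\mapsto\bo F\cup\bo G$ on representatives, manifestly compatible with $-\otimes_\Z R$.

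The one point that requires care — and the reason Corollary~\ref{cor:perfectPairing} is needed at all — is that perfectness rather than mere nondegeneracy of $\langle-,-\rangle_\Z$ is what makes the kernel computation above work: a nondegenerate integral bilinear form on a free module can acquire a nonzero radical after reducing modulo an integer (for instance $(x,y)\mapsto Nxy$ on $\Z$ becomes degenerate modulo $N$), which would break injectivity of $\nu_\Gamma$. Everything else in the argument is formal extension of scalars.
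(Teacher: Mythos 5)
Your argument is correct and rests on exactly the same key input as the paper's proof---Corollary~\ref{cor:perfectPairing} together with finite generation and freeness of $\sr F_N(\Gamma;\Z)$ and the fact that $\langle-\rangle_R$ factors through $\langle-\rangle_\Z$---so it is essentially the same approach. The only difference is packaging: the paper gets surjectivity from the square of quotient maps and injectivity by pairing images of a $\Z$-basis against its dual basis, whereas you compute the radical of the base-changed form directly; both come down to the observation that a perfect pairing on a finitely generated free $\Z$-module remains perfect, hence nondegenerate, after applying $-\otimes_\Z R$.
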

\begin{proof}
	Since $\ZFoam_N(\emp,\Gamma) \otimes R \to \RFoam_N(\emp,\Gamma)$ given by $\bo{F}\otimes1\mapsto\bo{F}$ is an isomorphism, the commutative diagram \[
		\begin{tikzcd}
			\ZFoam_N(\emp,\Gamma) \otimes R \ar[r] \ar[d,two heads] & \RFoam_N(\emp,\Gamma) \ar[d,two heads]\\
			\sr F_N(\Gamma;\Z) \otimes R \ar[r,"\nu_\Gamma"] & \sr F_N(\Gamma;R)
		\end{tikzcd}
	\]implies that $\nu_\Gamma$ is surjective. 
	Let $r = \rk(\sr F_N(\Gamma;\Z))$. 
	By Corollary~\ref{cor:perfectPairing}, for every basis $f_1,\ldots,f_r$ of $\sr F_N(\Gamma;\Z)$, there is a dual basis $g_1,\ldots,g_r \in \sr F_N(\Gamma;\Z)$, which is to say that $\langle f_i,g_j \rangle_\Z = \delta_{ij}$ where $\delta_{ij}$ is the Kronecker delta. It follows that $\langle \nu_\Gamma(f_i \otimes 1),\nu_\Gamma(g_j\otimes 1) \rangle_R = \delta_{ij}$ as well, which implies that $\nu_\Gamma$ is injective. 

	The proof of Corollary~\ref{cor:perfectPairing} implies that the pairing $\langle -,-\rangle_R$ is perfect. Alternatively, the diagram \[
		\begin{tikzcd}
			\sr F_N(\Gamma;\Z) \otimes R \ar[r] \ar[d,"\nu_\Gamma"] & \Hom_\Z(\sr F_N(\Gamma;\Z),\Z) \otimes R \ar[d]\\
			\sr F_N(\Gamma;R) \ar[r] & \Hom_R(\sr F_N(\Gamma;R),R)
		\end{tikzcd}
	\]commutes. Surjectivity of the rightmost vertical map combined with Corollary~\ref{cor:perfectPairing} implies that $\langle-,-\rangle_R$ is perfect. 
\end{proof}

Suppose $\Gamma$ is an $\sl(N)$ MOY graph and $D \subset \R^2$ is a disc for which $\Gamma$ is disjoint from $\partial D$. Then $\Gamma_1 = \Gamma \cap D$ may be viewed as an $\sl(N)$ MOY graph. If $\Gamma_0 = \Gamma\setminus \Gamma_1$, then \[
	\langle\Gamma\rangle = \langle \Gamma_0\rangle\cdot\langle\Gamma_1\rangle \in \Z[q,q^{-1}].
\]The following corollary categorifies this equality. We first define a map \[
	\iota\colon\RFoam_N(\emp,\Gamma_1) \to \RFoam_N(\Gamma_0,\Gamma)
\]in the following way. Fix a dotted foam $\bo{G}\colon \emp \to \Gamma_1$, and then isotope $\bo{G} \subset \R^2 \x [0,1]$ rel boundary so that it is contained in $D \x [0,1]$. Now $\bo{G}$ is disjoint from the product cobordism $\Gamma_0 \x [0,1]$, so we set $\iota(\bo{G})$ to be their disjoint union viewed as a dotted cobordism $\Gamma_0 \to \Gamma$. 

\begin{cor}\label{cor:splitUnionFormulaForFN}
	There is an isomorphism $\sr F_N(\Gamma_0;R) \otimes \sr F_N(\Gamma_1;R) \to \sr F_N(\Gamma;R)$ given by $\bo{F}\otimes\bo{G} \mapsto \bo{F}\cup \iota(\bo{G})$.
\end{cor}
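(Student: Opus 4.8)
The plan is to verify that the proposed map $\bo F \otimes \bo G \mapsto \bo F \cup \iota(\bo G)$ is a well-defined isomorphism by exhibiting it as the composite of two maps that are already known to be isomorphisms (or nearly so), and by checking it respects the perfect pairings. First I would set $r_0 = \rk \sr F_N(\Gamma_0;R)$ and $r_1 = \rk \sr F_N(\Gamma_1;R)$, which are finite by Corollary~\ref{cor:FNwithRCoefficients}, and note that the $q$-graded rank of $\sr F_N(\Gamma;R)$ is $\langle \Gamma\rangle = \langle\Gamma_0\rangle\cdot\langle\Gamma_1\rangle$, so source and target are free $R$-modules of the same rank with the same $q$-graded rank; it therefore suffices to show the map is well-defined and is either injective or surjective. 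Well-definedness amounts to checking that $\bo F\otimes \bo G \mapsto \bo F\cup\iota(\bo G)$ factors through the kernels of the defining pairings in each tensor factor, which I would do by the adjunction in Remark~\ref{rem:mirroringGivesAdjoint}: gluing in $\iota(\bo G)$ or capping off $\bo F$ inside the disc $D\times[0,1]$ are morphisms in $\RFoam_N$, so they send pairing-null elements to pairing-null elements.

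The cleanest route to bijectivity is to identify the pairing on the tensor product with the pairing on $\sr F_N(\Gamma;R)$. Concretely, for closed dotted foams, Robert--Wagner evaluation is multiplicative under disjoint union, and after isotoping everything into $D\times[0,1]$ versus its complement, the closed foam $(\bo F\cup\iota(\bo G))\cup\overline{(\bo F'\cup\iota(\bo G'))}$ becomes (isotopic to) the disjoint union of the closed foam pairing $\bo F$ against $\bo F'$ (in the complement of $D$) with the closed foam pairing $\bo G$ against $\bo G'$ (inside $D$). Hence
\[
	\langle \bo F\cup\iota(\bo G),\, \bo F'\cup\iota(\bo G')\rangle_R = \langle \bo F,\bo F'\rangle_R\cdot\langle \bo G,\bo G'\rangle_R,
\]
so the proposed map carries the tensor-product pairing on $\sr F_N(\Gamma_0;R)\otimes\sr F_N(\Gamma_1;R)$ to the pairing on $\sr F_N(\Gamma;R)$. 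Since the target pairing is nondegenerate, the map is injective; combined with the equality of ranks, it is an isomorphism.

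The main obstacle I anticipate is not the algebra but the geometric bookkeeping in the displayed multiplicativity step: one must argue carefully that an arbitrary dotted foam $\bo G\colon \emp\to\Gamma_1$ can be isotoped rel boundary into $D\times[0,1]$ (this is where it matters that $\Gamma_1 = \Gamma\cap D$ lies entirely in the disc, so no binding or facet of $\bo G$ is forced to leave $D\times[0,1]$), and that after doing so the two pieces of the pairing closed foam genuinely split as a disjoint union with no linking, singular points, or bindings shared between them. Once that separation is established, multiplicativity of $\langle-\rangle_R$ under disjoint union finishes it. An alternative to the pairing argument, should the isotopy analysis prove delicate, is to mimic the proof of Corollary~\ref{cor:splitUnionFormulaForFN}'s analogue at the level of $\Z$-coefficients: use the MOY-calculus decompositions of $\Gamma_0$ and $\Gamma_1$ into shifted empty graphs furnished by Robert--Wagner, tensor them, and check compatibility with the inclusion $\iota$; but I expect the direct pairing computation above to be shorter.
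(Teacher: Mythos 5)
Your opening steps are sound, and your well-definedness argument is a genuinely different (and valid) route from the paper's: for fixed $\bo G$ the map on the first factor is composition with the morphism $\iota(\bo G)\in\RFoam_N(\Gamma_0,\Gamma)$, and by the adjunction of Remark~\ref{rem:mirroringGivesAdjoint} such compositions send pairing-null elements to elements pairing trivially against \emph{every} foam $\emp\to\Gamma$; the analogous statement in the second factor needs the bookkeeping you flag (one must see that $\bo G\mapsto\bo F\cup\iota(\bo G)$ is, at least at the level of evaluations, composition of $\bo G$ placed in $D\times[0,1]$ with a fixed foam $\Gamma_1\to\Gamma$), but the idea is right. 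This sidesteps the issue the paper worries about (having to pair against non-split foams), which it instead resolves by showing that split foams already represent a basis. Your multiplicativity identity $\langle\bo F\cup\iota(\bo G),\bo F'\cup\iota(\bo G')\rangle_R=\langle\bo F,\bo F'\rangle_R\,\langle\bo G,\bo G'\rangle_R$ is exactly the computation the paper performs, applied there to dual bases furnished by Corollary~\ref{cor:perfectPairing}.

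The endgame, however, has a genuine gap. Minor point first: to deduce injectivity from the pairing-intertwining you need nondegeneracy of the \emph{source} pairing (if $\Phi(x)=0$ then $\langle x,y\rangle$ vanishes for all $y$ in the tensor product), not of the target as you cite; this is fine because the tensor product of the perfect pairings of Corollary~\ref{cor:FNwithRCoefficients} is perfect, but that is the fact you must invoke. More seriously, ``well-defined and injective between free $R$-modules of the same (graded) rank, hence an isomorphism'' is false over a general commutative ring: multiplication by $2$ on $\Z$ is injective between free modules of rank $1$ and is not surjective, and the corollary is needed for arbitrary $R$ (e.g.\ $\Z/P$-algebras), not just fields. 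Surjectivity plus equal rank would suffice; injectivity does not. The repair is available from ingredients you already assembled: either follow the paper and pair the images of a basis $\{a_i\otimes f_k\}$ against the images of the dual basis to get an identity Gram matrix, concluding that the images form a basis (this yields bijectivity, and in the paper also well-definedness); or note that since your map intertwines perfect pairings on free modules of equal rank, its matrix $A$ satisfies $A^{\mathsf T}M_1A=M_0$ with $M_0,M_1$ invertible, so $\det A$ is a unit and the map is an isomorphism.
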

\begin{proof}
	Let \[
		\Phi\colon \RFoam_N(\emp,\Gamma_0)\otimes \RFoam_N(\emp,\Gamma_1) \to \RFoam_N(\emp,\Gamma)
	\]be the map given by $\bo{F}\otimes\bo{G}\mapsto \bo{F} \cup \iota(\bo{G})$. It is not clear \textit{a priori} that $\Phi$ descends to a map on quotients. To verify that a given element of $\RFoam_N(\emp,\Gamma)$ represents the trivial element of $\sr F_N(\Gamma;R)$, we must check that it pairs trivially with every element of $\RFoam_N(\emp,\Gamma)$, not just those in the image of $\Phi$.
	We use the fact that the pairings are perfect (Corollary~\ref{cor:perfectPairing}) to show that every class in $\sr F_N(\Gamma;R)$ is represented by an element in the image of $\Phi$. 

	Choose elements $a_1,\ldots,a_r \in \RFoam_N(\emp,\Gamma_0)$ and $f_1,\ldots,f_s\in \RFoam_N(\emp,\Gamma_1)$ which represent bases for $\sr F_N(\Gamma_0;R)$ and $\sr F_N(\Gamma_1;R)$. Then choose elements $b_1,\ldots,b_r \in \RFoam_N(\emp,\Gamma_0)$ and $g_1,\ldots,g_s \in \RFoam_N(\emp,\Gamma_1)$ that represent the dual bases. Then \[
		\langle \Phi(a_i \otimes f_k), \Phi(b_k \otimes g_\ell)\rangle_R = \delta_{ij}\delta_{k\ell}
	\]by multiplicativity of $\langle - \rangle_R$ under disjoint union. Using the fact that \[
		\rk(\sr F_N(\Gamma;R)) = \rk(\sr F_N(\Gamma_0;R))\cdot\rk(\sr F_N(\Gamma_1;R)),
	\]it follows that the elements $\Phi(a_i \otimes f_k)$ represent a basis for $\sr F_N(\Gamma;R)$. Thus, to prove that a given element of $\RFoam_N(\emp,\Gamma)$ represents the trivial class in $\sr F_N(\Gamma;R)$, it suffices to check that its pairing with each $\Phi(a_i \otimes f_k)$ vanishes. It now follows that $\Phi$ descends to a map on quotients. The map on quotients is an isomorphism since it sends a basis to a basis. 
\end{proof}

\subsection{$\sl(N)$ link homology}\label{subsec:slNLinkHomology}

Let $D$ be a diagram of an oriented link $L$ with crossings numbered $1,\ldots,n$, and let $R$ be a ring. The \textit{(Khovanov--Rozansky) $\sl(N)$ chain complex of $D$ with coefficients in $R$}, denoted $\KRC_N(D;R)$, is a finitely generated free $R$-module equipped with two $\Z$-gradings called its quantum and homological gradings. Its differential preserves quantum grading and increases homological grading by $1$. The definition given here purposefully mimics Bar-Natan's description of Khovanov homology \cite{MR1917056}, except that we follow the usual conventions for $\sl(N)$ link homology (see for example \cite{MR2391017}) which occasionally disagree with the usual conventions of Khovanov homology. 

Define a \textit{complete resolution} $v$ of $D$ to be a map $v\colon \{1,\ldots,n\} \to \{0,1\}$ which associates to each crossing either $0$ or $1$. Using the rule in Figure~\ref{fig:resolutions}, we associate to each complete resolution $v$ an $\sl(N)$ MOY graph $D_v$, where each edge of $D_v$ is labeled either $1$ or $2$. We associate to $D_v$ the finitely generated free $R$-module $\sr F_N(D_v;R)$ equipped with its $\Z$-grading which we call its \textit{quantum} grading or $q$-grading. We introduce a new $\Z$-grading on $\sr F_N(D_v;R)$ called its \textit{homological grading} or $h$-grading by declaring that $\sr F_N(D_v;R)$ lies entirely in homological grading $0$. For any integers $i,j$, let $h^iq^j\sr F_N(D_v;R)$ be the bigraded $R$-module whose bigraded summand in $(h,q)$-grading $(x + i,y + j)$ is equal to the bigraded summand of $\sr F_N(D_v;R)$ in $(h,q)$-grading $(x,y)$.
If $D$ has $n_+$ positive crossings and $n_- = n - n_+$ negative crossings, then define \[
	\KRC_N(D;R) = h^{-n_+}q^{Nn_+ - (N-1)n_-} \bigoplus_v h^{|v|}q^{-|v|} \sr F_N(D_v;R)
\]where $|v| = \sum_{c=1}^n v(c)$. 

\begin{figure}[!ht]
	\labellist
	\pinlabel $1$ at 65 280
	\pinlabel $1$ at 505 280
	\pinlabel $0$ at 290 490
	\pinlabel $0$ at 290 55
	\pinlabel $2$ at 505 455
	\endlabellist
	\centering
	\includegraphics[width=.3\textwidth]{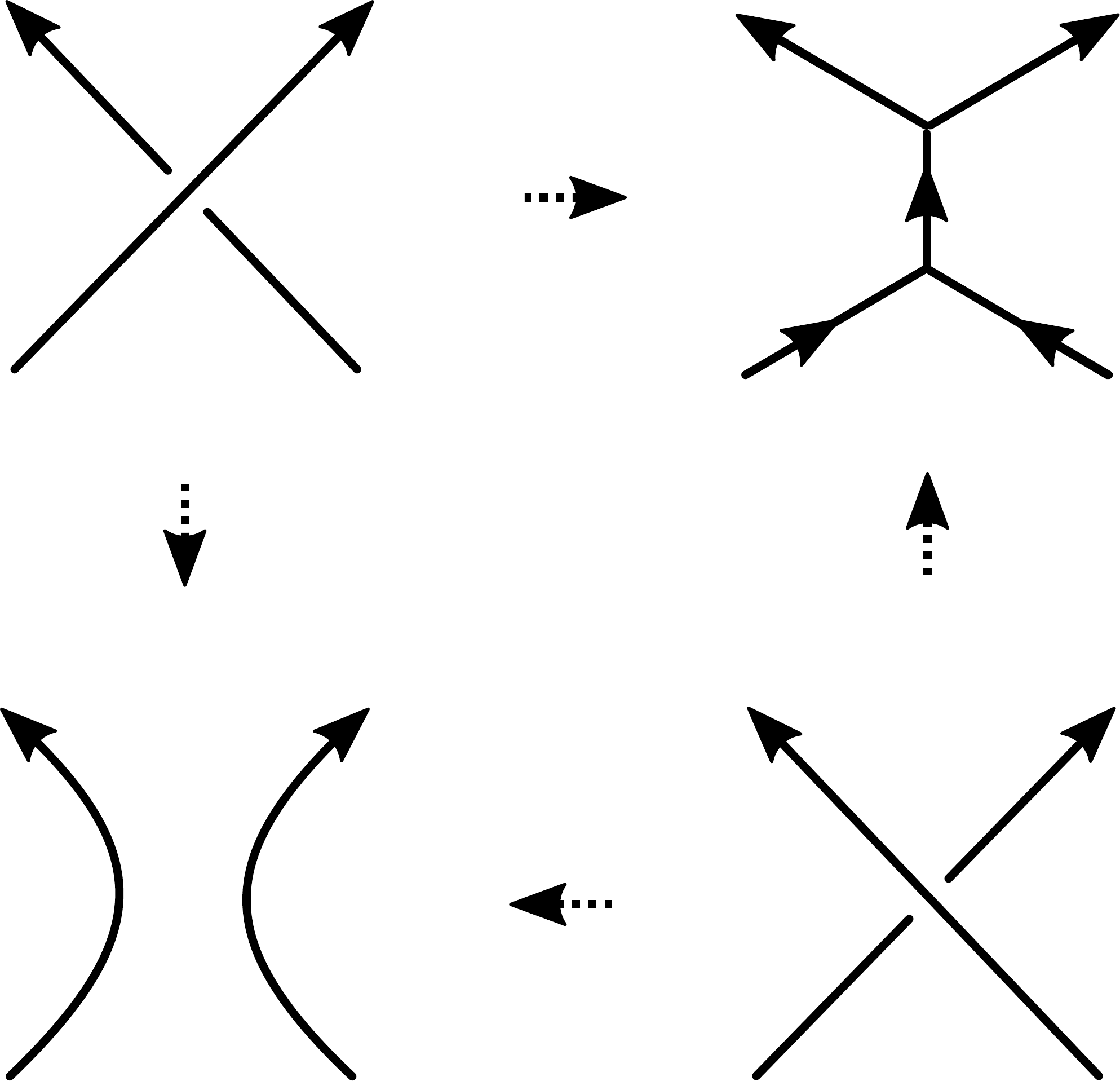}
	\captionsetup{width=.8\linewidth}
	\caption{MOY graphs obtained by resolving crossings of an oriented link diagram. An edge of an MOY graph without an explicit label is labeled $1$. The top left crossing is positive and the bottom right crossing is negative.}
	\label{fig:resolutions}
\end{figure}

We now define the differential on $\KRC_N(D;R)$. Let $v,w$ be a pair of complete resolutions for which there is a crossing $c \in \{1,\ldots,n\}$ such that $v(c) = 0$ and $w(c) = 1$ while $v(c') = w(c')$ for all $c'\neq c$. Associate to the pair $v,w$ the $\sl(N)$ foam $F_{vw}\colon D_v \to D_w$ given in Figure~\ref{fig:zipUnzip}. We note that $F_{vw}$ is a foam without dots. We refer to $\sr F_N(F_{vw};R)\colon \sr F_N(D_v;R) \to \sr F_N(D_w;R)$ as the \textit{edge map} from $v$ to $w$. The differential $d$ on $\KRC_N(D;R)$ is defined to be the signed sum of all edge maps where the edge map $\sr F_N(F_{vw};R)$ associated to $v,w$ is given the sign $(-1)^{\sum_{i < c} v(i)}$, just as in Khovanov homology \cite{MR1917056}. The grading shifts in the definition of $\KRC_N(D;R)$ ensure that this differential preserves $q$-grading and increases $h$-grading by $1$.

\begin{figure}[!ht]
	\centering
	\includegraphics[width=.3\textwidth]{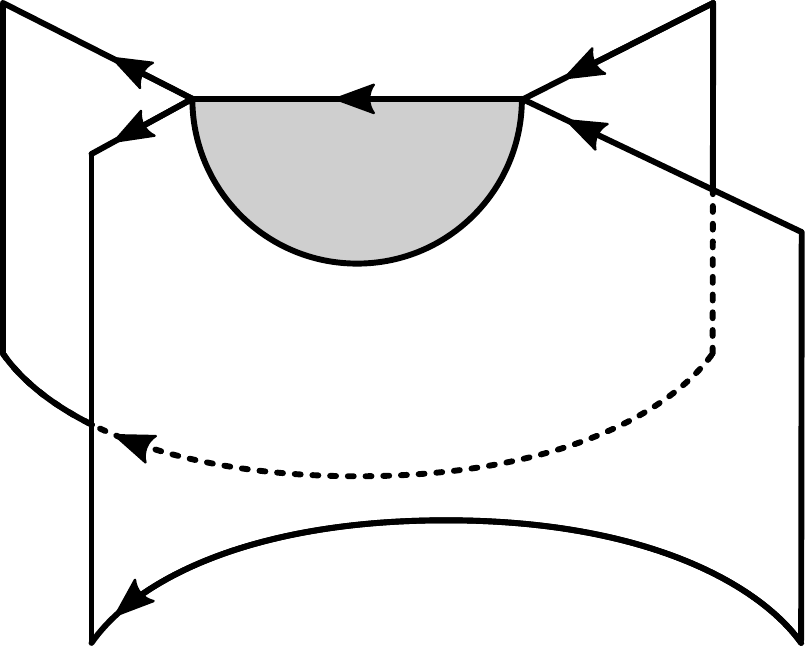}\hspace{30pt}
	\includegraphics[width=.3\textwidth]{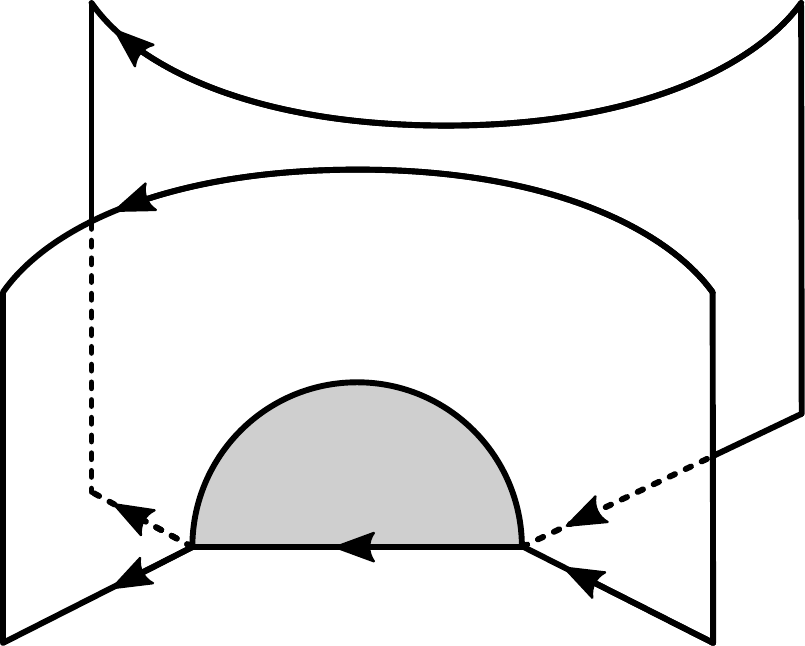}
	\captionsetup{width=.8\linewidth}
	\caption{The foam $F_{vw}$ from $D_v$ on the bottom to $D_w$ on the top. The foam on the right is for a positive crossing while the foam on the left is for a negative crossing.}
	\label{fig:zipUnzip}
\end{figure}

By Corollary~\ref{cor:FNwithRCoefficients}, there is an isomorphism of bigraded chain complexes over $R$ between $\KRC_N(D;R)$ and $\KRC_N(D;\Z) \otimes R$. The bigraded chain homotopy type of $\KRC_N(D;\Z)$ is an invariant of the oriented link $L$ that $D$ represents (see \cite[Corollary 4.4]{MR4164001}, \cite[section 4.2]{MR3545951}), so the same is true for any arbitrary coefficient ring $R$. The proof involves checking invariance under Reidemeister moves, by defining homotopy equivalences using maps induced by foams. 

The homology of $\KRC_N(D;R)$ is the \textit{(Khovanov--Rozansky) $\sl(N)$ link homology of $L$ with coefficients in $R$} and is denoted $\KR_N(L;R)$. We sometimes refer to $\KR_N(L;R)$ as \textit{unreduced} $\sl(N)$ link homology to distinguish it from the \textit{reduced} variant defined in section~\ref{subsec:BasepointOperators}. Since the $\sl(N)$ chain complex over $R$ is obtained by tensoring the complex over $\Z$ with $R$, we note that the universal coefficients theorem for $\KR_N(L;R)$ is valid. Since the $q$-graded rank of $\sr F_N(D_v;R)$ agrees with the MOY evaluation $\langle D_v\rangle$ of $D_v$ (Corollary~\ref{cor:FNwithRCoefficients}), it follows that if $R$ is a field, then the $q$-graded Euler characteristic of $\KR_N(L;R)$ coincides with the $\sl(N)$ polynomial invariant $P_N(L)$ of $L$. 

The categorification of the $\sl(N)$ link polynomial constructed by Khovanov--Rozansky in \cite{MR2391017} is defined over the field of rational numbers and uses matrix factorizations instead of foam evaluations. The construction described here is defined over an arbitrary ring and recovers Khovanov--Rozansky's invariant when taken over $\Q$ (see \cite[Proposition 4.3]{MR4164001}, \cite[Theorem 4.2]{MR3545951}).

When $N = 2$, Khovanov--Rozansky's categorification of the $\sl(2)$ link invariant coincides with Khovanov homology with rational coefficients \cite{MR2391017}. With integral (and therefore arbitrary) coefficients, Khovanov homology agrees with $\sl(2)$ link homology described here. More precisely, there is an isomorphism \[
	\KR_2^{i,j}(L;R) \cong \Kh^{i,-j}(m(L),R)
\]where $m(L)$ denotes the mirror of $L$, and $i$ and $j$ denote homological and quantum gradings, respectively. This can be seen by directly comparing the constructions, taking care in dealing with dots and signs. For the interested reader, we note that a dot on a facet labeled $1$ in our current setup basically corresponds to the negative of a dot in the usual Khovanov setting, but a dot in our setting negates when moved across a binding involving a facet labeled $2$ (see \cite[Equation (11) of Proposition 3.38]{MR4164001}).

\subsection{Basepoint operators and reduced $\sl(N)$ link homology}\label{subsec:BasepointOperators}

\begin{df}
	A \textit{basepoint} on an $\sl(N)$ MOY graph $\Gamma$ is a point on the interior of an edge of $\Gamma$. Let $p$ be a basepoint on $\Gamma$ lying on an edge $e$, and let $k = \ell(e)$. We define an operator $e_w(p)$ on $\sr F_N(\Gamma;R)$ for each weight $w = 1,\ldots,k$, called the \textit{basepoint operator of weight $w$ associated to $p$}. Let $\bo{F}\colon \emp \to \Gamma$ be a dotted foam, and define $e_w(p)\bo{F}$ to be the dotted foam obtained by adding a dot of weight $w$ to the facet $f$ of $F$ whose boundary contains $p$. In particular, $e_w(p)\bo{F} = e_w\bo{F}$ where $e_w$ is the $w$th elementary symmetric polynomial in the $k = \ell(f)$ variables associated to $f$. The basepoint operator $e_w(p)$ depends only on the edge $e$, and not on the location of $p$ within $e$. 
\end{df}
\begin{lem}\label{lem:basepointOperatorsSelfAdjoint}
	Basepoint operators are self-adjoint with respect to the bilinear pairing $\langle -,-\rangle_R$. 
\end{lem}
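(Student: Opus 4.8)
The plan is to show that the operator $e_w(p)$ on $\sr F_N(\Gamma;R)$ agrees with its own adjoint, using the characterization of the adjoint via mirroring given in Remark~\ref{rem:mirroringGivesAdjoint}. Recall that adding a dot of weight $w$ to the facet $f$ containing $p$ can be realized by composing with a dotted foam, namely the identity foam (product cobordism) $\Gamma \times [0,1]$ with a single dot of weight $w$ placed on the product facet $f \times [0,1]$ over the point $p$. Call this dotted foam $\bo{E}_w \colon \Gamma \to \Gamma$; then $\sr F_N(\bo{E}_w; R)$ is precisely the basepoint operator $e_w(p)$, since precomposing $\bo{F} \colon \emp \to \Gamma$ with $\bo{E}_w$ glues a dot of weight $w$ onto the facet of $\bo{F}$ whose boundary contains $p$.

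The key step is then to observe that the mirror $\widebar{\bo{E}_w}$ of $\bo{E}_w$ across a plane is isotopic rel boundary to $\bo{E}_w$ itself. The underlying foam of $\bo{E}_w$ is the product cobordism $\Gamma \times [0,1]$, which is symmetric under the reflection swapping the two boundary copies of $\Gamma$; the single dot lies on the facet $f \times [0,1]$, and a dot is a weighted point with no orientation data, so it is carried to a dot of the same weight on the same facet. Hence $\widebar{\bo{E}_w} = \bo{E}_w$ as dotted foams up to isotopy rel boundary, and therefore $\sr F_N(\widebar{\bo{E}_w}; R) = \sr F_N(\bo{E}_w; R) = e_w(p)$. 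Applying Remark~\ref{rem:mirroringGivesAdjoint}, which identifies $\sr F_N(\bo{G}; R)$ and $\sr F_N(\widebar{\bo{G}}; R)$ as adjoint operators with respect to $\langle -, - \rangle_R$, we conclude that $e_w(p)$ is self-adjoint.

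I expect the only subtlety — and the main point requiring care rather than difficulty — is the bookkeeping around mirroring: one must check that the reflection plane can be chosen (e.g.\ a horizontal plane $\R^2 \times \{1/2\}$, or a vertical plane) so that it fixes $\Gamma$ setwise and exchanges the two boundary MOY graphs, and that the dot's defining data (its weight $w$ and the facet it sits on) are manifestly preserved, so that no sign or relabeling is introduced. Since dots carry no orientation and facets labeled $a$ in a product cobordism are sent to facets labeled $a$, this is immediate, and the lemma follows.
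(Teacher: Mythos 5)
Your proposal is correct, but it reaches self-adjointness by a slightly different route than the paper. The paper argues directly at the level of the closed evaluation: for $\bo{F},\bo{G}\colon\emp\to\Gamma$, the facet of $F$ and the facet of $G$ adjacent to the marked edge become one facet of the closed foam $F\cup\widebar{G}$, so since dots move freely within a facet, $e_w\bo{F}\cup\widebar{\bo{G}}=\bo{F}\cup\widebar{e_w\bo{G}}$ as dotted closed foams, and the chain of equalities $\langle e_w(p)\bo{F},\bo{G}\rangle_R=\langle e_w\bo{F}\cup\widebar{\bo{G}}\rangle_R=\langle\bo{F}\cup\widebar{e_w\bo{G}}\rangle_R=\langle\bo{F},e_w(p)\bo{G}\rangle_R$ finishes it. You instead factor the basepoint operator through the dotted product cobordism $\bo{E}_w=e_w\cdot(\Gamma\x[0,1])$, observe $\widebar{\bo{E}_w}=\bo{E}_w$, and invoke Remark~\ref{rem:mirroringGivesAdjoint}. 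This is valid, and it is a nice conceptual packaging: it isolates the general adjointness statement and reduces the lemma to the reflection symmetry of a single dotted foam. But note that it does not really avoid the paper's key mechanism: your identification $\sr F_N(\bo{E}_w;R)\bo{F}=\bo{F}\cup\bo{E}_w=e_w(p)\bo{F}$ requires absorbing the product cobordism into $\bo{F}$ by an isotopy rel boundary and sliding the dot into the facet of $F$ containing $p$, which is exactly the ``dots move freely within a facet'' fact the paper uses directly, and the claim $\widebar{\bo{E}_w}=\bo{E}_w$ quietly uses that the mirroring convention (including induced facet orientations and boundary identifications) fixes the identity cobordism --- true under the paper's conventions, as you flag, but worth stating explicitly since the paper never spells out the orientation bookkeeping for $\widebar{\bo{G}}$. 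In short: correct, marginally longer than the paper's two-line argument, with the trade-off that your version makes the adjointness remark do the work while the paper's version makes the dot-sliding do the work.
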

\begin{proof}
	If $\bo{F},\bo{G}$ are dotted foams $\emp\to\Gamma$, then \[
		\langle e_w(p)\bo{F},\bo{G}\rangle_R = \langle e_w\bo{F} \cup \widebar{\bo{G}}\rangle_R = \langle \bo{F} \cup \widebar{e_w\bo{G}}\rangle_R = \langle \bo{F},e_w(p)\bo{G}\rangle_R
	\]where the first and third equalities follow from definitions. 
	Let $f$ be the facet of $F$ adjacent to $e$ and let $g$ be the facet of $G$ adjacent to $e$. Then $f$ and $g$ are parts of a single facet of $F \cup \widebar{G}$. Since dots may move freely within a facet, we find that $e_w\bo{F} \cup \widebar{\bo{G}} = \bo{F} \cup \widebar{e_w\bo{G}}$ as dotted foams, which gives the second equality. 
\end{proof}

Lemma~\ref{lem:basepointOperatorsSelfAdjoint} is valid in both $\RFoam_N(\emp,\Gamma)$ and $\sr F_N(\Gamma;R)$. To see that $e_w(p)$ is well-defined on $\sr F_N(\Gamma;R)$, suppose $\sum_i a_i \bo{F}_i \in \RFoam_N(\emp,\Gamma)$ represents zero in $\sr F_N(\Gamma;R)$. Then for any dotted foam $\bo{G}\colon\emp \to \Gamma$, we have \[
	\sum_i a_i\langle e_w(p)\bo{F}_i, \bo{G}\rangle_R = \sum_i a_i \langle \bo{F}_i,e_w(p)\bo{G}\rangle_R = 0.
\]The basepoint operator $e_w(p)\colon \sr F_N(\Gamma;R) \to \sr F_N(\Gamma;R)$ is homogeneous of degree $2w$.

\begin{lem}\label{lem:weight1basepointOpHasNthPowerZero}
	Let $\Gamma$ be an $\sl(N)$ MOY graph, and let $X = e_1(p)$ be the weight $1$ basepoint operator associated to a basepoint $p$ lying on an edge of $\Gamma$ labeled $1$. Then $X^N = 0$. 
\end{lem}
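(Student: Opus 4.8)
The plan is to reduce the statement to a computation about the Robert--Wagner evaluation of closed foams, using the self-adjointness of basepoint operators (Lemma~\ref{lem:basepointOperatorsSelfAdjoint}) and the nondegeneracy of the pairing $\langle-,-\rangle_R$ on $\sr F_N(\Gamma;R)$ (Corollary~\ref{cor:FNwithRCoefficients}). Since $X = e_1(p)$ acts by adding a dot of weight $1$ to the facet adjacent to the edge $e$ carrying $p$, the operator $X^N$ acts by adding $N$ such dots, i.e.\ by multiplication by $(e_1)^N$ on that facet (a facet labeled $1$, so there is a single variable $X_e$ and $e_1 = X_e$; thus $X^N$ is multiplication by $X_e^N$). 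To show $X^N = 0$ on $\sr F_N(\Gamma;R)$, it suffices by nondegeneracy to show that for every pair of dotted foams $\bo F,\bo G\colon \emp \to \Gamma$ we have $\langle X^N\bo F,\bo G\rangle_R = 0$, equivalently $\langle (X_e^N\bo F)\cup\widebar{\bo G}\rangle_R = 0$.

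Now $(X_e^N\bo F)\cup\widebar{\bo G}$ is a closed dotted foam which agrees with $\bo F\cup\widebar{\bo G}$ except that it carries $N$ extra weight-$1$ dots on a single facet $\varphi$ (the facet of the closed foam containing $p$, of label $1$). So the key step is: for any closed dotted foam $\bo H$ and any label-$1$ facet $\varphi$ of $\bo H$, adding $N$ dots of weight $1$ to $\varphi$ kills the evaluation in $R$, i.e.\ $\langle (X_\varphi^N)\bo H\rangle_R = 0$. First I would expand $\langle (X_\varphi^N)\bo H\rangle = \sum_c \langle (X_\varphi^N)\bo H, c\rangle$ over colorings $c$ of $H$. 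For a fixed coloring $c$, the facet $\varphi$ has label $1$, so $c(\varphi) = \{i\}$ is a single pigment, and the factor $P_\varphi$ contributed by $\varphi$ gets its formal variable replaced by $X_i$; adding $N$ weight-$1$ dots multiplies this contribution by $X_i^N$. Thus $\langle (X_\varphi^N)\bo H, c\rangle = X_i^N\cdot\langle\bo H,c\rangle$ where $i = c(\varphi)$, and summing, $\langle (X_\varphi^N)\bo H\rangle = \sum_c X_{c(\varphi)}^N\langle\bo H,c\rangle$.

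To finish, I would use that $\langle\bo H\rangle$ and each $\langle(X_\varphi^N)\bo H\rangle$ lie in $\Z[X_1,\dots,X_N]^{\fk S_N}$ (symmetry: \cite[Corollary 2.17]{MR4164001}), that the evaluation map to $R$ sends each $X_i\mapsto 0$, and that a symmetric polynomial divisible (in the obvious sense, coloring by coloring) by a single $X_i^N$ is in fact divisible by the full power-sum type expression $\sum_i X_i^N$ — more precisely, I expect the cleanest route is: $\langle(X_\varphi^N)\bo H\rangle = \sum_c X_{c(\varphi)}^N\langle\bo H,c\rangle$ is symmetric, and one argues it is a $\Z[X_1,\dots,X_N]^{\fk S_N}$-linear combination of the power sum $p_N = \sum_i X_i^N$ and of $e_1,\dots$, all of which vanish under $X_i\mapsto 0$ when $N\ge 1$; hence its image in $R$ is $0$. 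Actually the simplest observation is that $p_N = \sum_i X_i^N$ maps to $0$ in $R$ since each $X_i\mapsto 0$, and that any closed-foam evaluation picking up a global factor of $X_i^N$ on a label-$1$ facet is symmetric hence — after summing over the $\fk S_N$-orbit of colorings — manifestly a multiple of $p_N$ plus lower terms each itself mapping to $0$; I would make this precise by noting $\langle(X_\varphi^N)\bo H\rangle$ is a symmetric polynomial with no constant term. \emph{The main obstacle} is making the last divisibility/vanishing argument fully rigorous: one must be careful that $\sum_c X_{c(\varphi)}^N\langle\bo H,c\rangle$, while visibly symmetric, is genuinely an integral polynomial with zero constant term, which follows because $\langle(X_\varphi^N)\bo H\rangle$ is a specialization of the integral symmetric polynomial $\langle\bo H\rangle$ multiplied by a degree-$2N$ correction, so its quantum degree is positive and it therefore has vanishing constant term, hence maps to $0$ in $R$. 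Alternatively, and perhaps more cleanly, since $\langle(X_\varphi^N)\bo H\rangle \in \Z[X_1,\dots,X_N]^{\fk S_N}$ is homogeneous of positive degree, it lies in the augmentation ideal and maps to $0$ under $X_i\mapsto 0$; this avoids the divisibility discussion entirely and is the approach I would actually write up.
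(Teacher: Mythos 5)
Your reduction is fine up to the last step: it is indeed enough to show that $\langle (X_e^N\bo F)\cup\widebar{\bo G}\rangle_R=0$ for all dotted foams $\bo F,\bo G\colon\emp\to\Gamma$, and your per-coloring identity $\langle (X_\varphi^N)\bo H,c\rangle=X_{c(\varphi)}^N\langle\bo H,c\rangle$ is correct. The gap is the final vanishing argument. You claim $\langle (X_\varphi^N)\bo H\rangle$ is homogeneous of \emph{positive} degree and hence lies in the augmentation ideal; but its degree is $\deg(\bo H)+2N$, and $\deg(\bo H)=\deg(\bo F)+\deg(\bo G)$ can perfectly well equal $-2N$, since closed foams (and foams $\emp\to\Gamma$) can have arbitrarily negative degree. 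For example, for $N=3$ the theta-foam with facets labeled $1,1,2$ has degree $-(4N-6)=-6$, so putting the three weight-$1$ dots on one of its $1$-labeled facets produces a closed dotted foam of degree exactly $0$ (for general $N\ge 3$, add $N-3$ further dots elsewhere); such foams arise as $\bo F\cup\widebar{\bo G}$ for suitable $\bo F,\bo G$. In the degree-zero case the evaluation is a constant integer, and the specialization $X_i\mapsto 0$ does nothing to constants, so your argument says nothing there. Worse, this is precisely where the content of the lemma lives: the same degree bookkeeping applied to $X^{N-1}$ also produces degree-zero closed foams (e.g.\ the sphere with $N-1$ dots), whose evaluation is $-1\neq 0$ — and indeed $X^{N-1}\neq 0$. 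So the distinction between $N-1$ and $N$ dots cannot be detected by grading alone; one must actually prove that the degree-zero evaluations with $N$ dots vanish as integers, which in your theta example already requires an argument about antisymmetry of the numerator versus the Vandermonde factor, and in general requires genuine foam-evaluation input.

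For comparison, the paper avoids this by first proving the statement for the circle $O^1$: using Robert--Wagner's sphere relation $\langle X^kS\rangle_R=-1$ for $k=N-1$ and $0$ otherwise, together with the fact that $\sr F_N(O^1;R)$ is free of rank $N$, it shows $D,XD,\dots,X^{N-1}D$ is a basis with dual basis $X^{N-1}D,\dots,D$, whence $X^N=0$ there; it then transfers the statement to an arbitrary $\Gamma$ by writing $X$ as a composite through $\sr F_N(O^1;R)\otimes\sr F_N(\Gamma;R)$ via the split-union isomorphism (Corollary~\ref{cor:splitUnionFormulaForFN}) and a merge foam. If you want to keep your closed-foam approach, you would need to supply exactly this kind of input (a local relation such as the sphere/neck-cutting relations of \cite{MR4164001}) to handle the degree-zero evaluations; the augmentation-ideal observation alone does not close the argument.
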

\begin{proof}
	Let $O^1$ be an oriented circle in the plane labeled $1$ viewed as an $\sl(N)$ MOY graph, and let $X$ be the weight $1$ basepoint operator associated to any basepoint lying on $O^1$. Let $D\colon \emp \to O^1$ be a disc, viewed as a foam with a single facet labeled $1$ with no dots. We claim that the classes of $D, XD, \ldots, X^{N-1}D$ in $\sr F_N(O^1;R)$ form an $R$-basis. To see this, we apply a special case of the sphere relation of \cite[Proposition 3.32]{MR4164001}, that if $X^kS$ denotes the $2$-sphere viewed as an $\sl(N)$ foam with a single facet labeled $1$ with $k$ dots of weight $1$, then \[
		\langle X^kS\rangle_R = \begin{cases}
			-1 & k = N-1\\
			0 & \text{else.}
		\end{cases}
	\]Since $\sr F_N(O^1;R)$ is free of rank $N$, it now follows that $D,XD,\ldots,X^{N-1}D$ is a basis with dual basis $X^{N-1}D,\ldots,XD,D$. Thus $X^N = 0$ in the special case that $\Gamma = O^1$. 

	Now let $\Gamma$ be an arbitrary $\sl(N)$ MOY graph with an edge labeled $1$, and let $X$ be the associated weight $1$ basepoint operator. Let $O^1 \amalg \Gamma$ be the $\sl(N)$ MOY graph obtained from $\Gamma$ by adding an small oriented circle labeled $1$ near the basepoint $p$ in such a way that $O^1$ bounds a disc in the plane that is disjoint from $\Gamma$. We assume that the orientation on $O^1$ is chosen so that merging $O^1$ with $\Gamma$ near the basepoint defines a foam $M\colon O^1 \amalg \Gamma \to \Gamma$. 
	It is straightforward to see that $X\colon \sr F_N(\Gamma;R) \to \sr F_N(\Gamma;R)$ agrees with the composite map \[
		\begin{tikzcd}
			\sr F_N(\Gamma;R) \ar[r] & \sr F_N(O^1;R) \otimes \sr F_N(\Gamma;R) \ar[r] & \sr F_N(O^1 \amalg \Gamma;R) \ar[r,"\sr F_N(M;R)"] & \sr F_N(\Gamma;R)
		\end{tikzcd}
	\]where the first map is $a \mapsto XD \otimes a$ and the second map is the isomorphism of Corollary~\ref{cor:splitUnionFormulaForFN}. Since $X^N = 0$ on $\sr F_N(O^1;R)$, it follows that $X^N = 0$ on $\sr F_N(\Gamma;R)$ as well.
\end{proof}

Let $D$ be a diagram of an oriented link $L$ with its crossings ordered, and let $\KRC_N(D;R)$ be its $\sl(N)$ chain complex. A \textit{basepoint} on $D$ is a point on $D$ away from the crossings. If $p$ is a basepoint on $D$, then for any complete resolution $v$ of $D$, the same point $p$ may be viewed as a basepoint on the $\sl(N)$ MOY graph $D_v$. The basepoint $p$ on $D_v$ lies on an edge labeled $1$, so there is an associated weight $1$ basepoint operator $e_1(p)\colon \sr F_N(D_v;R) \to \sr F_N(D_v;R)$. 

\begin{df}
	The \textit{basepoint operator on $\KRC_N(D;R)$ with respect to $p$} is the map \[
		X_p\colon \KRC_N(D;R) \to \KRC_N(D;R)
	\]given by the direct sum of the weight $1$ basepoint operators $e_1(p)\colon \sr F_N(D_v;R) \to \sr F_N(D_v;R)$. 
\end{df}

It follows from Lemma~\ref{lem:weight1basepointOpHasNthPowerZero} that $X_p^N = 0$. It is clear that $X_p$ commutes with the differential on $\KRC_N(D;R)$ because each edge map commutes with the relevant weight $1$ basepoint operators, so it induces a map on $\sl(N)$ link homology. If $p,q$ are two basepoints that lie on the same component of $L$, then it turns out that $X_p$ and $X_q$ are chain homotopic and therefore induce the same map on $\KR_N(L;R)$. One may see this by adapting the matrix factorization argument of \cite[Lemma 5.16]{MR3447099} to this setting by explicitly defining a chain homotopy $H$ using foams. 

\begin{df}
	The \textit{reduced $\sl(N)$ link homology} $\rKR_N(L,p;R)$ of $L$ with respect to the basepoint $p$ is the homology of the complex\[
		q^{1-N}(X_p^{N-1}\KRC_N(D;R))
	\]where $q^{1-N}$ denotes a shift in quantum grading. This shift in grading is made so that the reduced $\sl(N)$ link homology of the unknot is concentrated in quantum grading $0$.
\end{df}

Invariance of $\rKR_N(L,p;R)$ can be shown using the argument of \cite[section 3]{MR2034399}. The choice of another basepoint $q$ on $D$ defines another basepoint operator $X_q$ on $\rKR_N(L,p;R)$ for which $X_q^N = 0$. 

We record here a split union formula for reduced $\sl(N)$ link homology, which follows from Corollary~\ref{cor:splitUnionFormulaForFN}. 

\begin{cor}\label{cor:splitUnionReducedslNHomology}
	Let $L,J$ be oriented links, and let $L\amalg J$ denote their split union. If $p$ is a basepoint on $L$, then there is an isomorphism \[
		\rKR_N(L\amalg J,p;R) \cong \rKR_N(L,p;R) \otimes \KR_N(J;R)
	\]
\end{cor}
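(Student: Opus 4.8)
The plan is to reduce this split-union formula for reduced homology to the split-union formula for the modules $\sr F_N$ established in Corollary~\ref{cor:splitUnionFormulaForFN}, applied resolution-by-resolution, and then to track how the basepoint operator and its $(N-1)$st power interact with that isomorphism. First I would fix a diagram $D$ for $L$ and a diagram $E$ for $J$, place them side by side so that $D \amalg E$ is a diagram for $L \amalg J$ with the basepoint $p$ lying on $D$, and order the crossings so that those of $D$ precede those of $E$. A complete resolution of $D \amalg E$ is then a pair $(v,w)$ with $v$ a resolution of $D$ and $w$ a resolution of $E$, and the MOY graph $(D\amalg E)_{(v,w)}$ is the disjoint union $D_v \amalg E_w$, with $E_w$ contained in a disc disjoint from $D_v$. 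Corollary~\ref{cor:splitUnionFormulaForFN} then gives a natural isomorphism $\sr F_N(D_v;R) \otimes \sr F_N(E_w;R) \iso \sr F_N(D_v \amalg E_w;R)$. Taking the direct sum over all $(v,w)$ and incorporating the grading shifts — here one checks that the shift $h^{-n_+}q^{Nn_+ - (N-1)n_-}$ and the per-resolution shifts $h^{|v|}q^{-|v|}$ are additive over the disjoint union, so they match the tensor product of the shifts for $D$ and $E$ — yields a bigraded isomorphism of $R$-modules $\KRC_N(D;R) \otimes \KRC_N(E;R) \iso \KRC_N(D\amalg E;R)$.

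Next I would verify that this isomorphism is a chain map. The edge maps of $D \amalg E$ come in two families: those that change a crossing of $D$, realized by a foam of the form $F_{vv'} \cup (E_w \times [0,1])$, i.e. $\sr F_N(F_{vv'};R) \otimes \Id$ under the tensor decomposition; and those that change a crossing of $E$, realized by $\Id \otimes \sr F_N(F_{ww'};R)$. The sign conventions agree because the crossings of $D$ were ordered first. Hence the differential on $\KRC_N(D\amalg E;R)$ is the tensor-product differential $d_D \otimes \Id + (-1)^{?}\Id \otimes d_E$ with the Koszul sign, so the isomorphism above is an isomorphism of chain complexes, and by the Künneth-type behavior built into the tensor product of complexes over $R$ (or simply by naturality) it restricts appropriately. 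Now the basepoint $p$ lies on $D_v$ for every $v$, so the weight-$1$ basepoint operator $X_p$ on $\KRC_N(D\amalg E;R)$ corresponds under the isomorphism to $X_p \otimes \Id$ on $\KRC_N(D;R) \otimes \KRC_N(E;R)$ — this is immediate from the fact that the foam realizing $e_1(p)$ adds a dot on a facet of $D_v$ and is disjoint from the $E_w$ part, together with the naturality of Corollary~\ref{cor:splitUnionFormulaForFN}. Consequently $X_p^{N-1}$ corresponds to $X_p^{N-1}\otimes \Id$, and the subcomplex $X_p^{N-1}\KRC_N(D\amalg E;R)$ corresponds to $(X_p^{N-1}\KRC_N(D;R)) \otimes \KRC_N(E;R)$, since $\KRC_N(E;R)$ is free (Corollary~\ref{cor:FNwithRCoefficients}) so tensoring with it is exact and commutes with taking images. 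Applying the quantum shift $q^{1-N}$ to the $D$-factor only — which is the correct normalization since the $J$-factor is unreduced — identifies $q^{1-N}(X_p^{N-1}\KRC_N(D\amalg E;R))$ with $\bigl(q^{1-N}X_p^{N-1}\KRC_N(D;R)\bigr) \otimes \KRC_N(E;R)$ as bigraded complexes.

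Finally I would take homology. Because $\KRC_N(E;R)$ is a bounded complex of finitely generated free $R$-modules, the algebraic Künneth theorem degenerates: there is a natural bigraded isomorphism $H_*\bigl(C \otimes \KRC_N(E;R)\bigr) \cong H_*(C) \otimes H_*(\KRC_N(E;R))$ for any complex $C$ of $R$-modules, with no $\Tor$ term, by freeness of the second factor. Taking $C = q^{1-N}X_p^{N-1}\KRC_N(D;R)$ gives exactly $\rKR_N(L\amalg J,p;R) \cong \rKR_N(L,p;R) \otimes \KR_N(J;R)$, as desired. The main obstacle — or at least the only point requiring genuine care rather than bookkeeping — is confirming that $X_p^{N-1}\KRC_N(D\amalg E;R)$ really is the tensor product $(X_p^{N-1}\KRC_N(D;R)) \otimes \KRC_N(E;R)$ under the chain isomorphism, which hinges on the exactness of $-\otimes_R \KRC_N(E;R)$; everything else is matching grading shifts and sign conventions, which I would state but not belabor.
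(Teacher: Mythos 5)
Your argument follows the same route as the paper's (very terse) proof: fix a split diagram, apply Corollary~\ref{cor:splitUnionFormulaForFN} resolution by resolution, check compatibility with the differentials via split foams and with the basepoint operator, and identify the reduced subcomplex for the split union with the tensor product of the reduced subcomplex for $L$ and the unreduced complex for $J$. Up to that point your write-up is correct and merely fills in bookkeeping (signs, grading shifts, and the identification of $X_p^{N-1}\KRC_N(D\amalg E;R)$ with $\bigl(X_p^{N-1}\KRC_N(D;R)\bigr)\otimes\KRC_N(E;R)$, which freeness of $\KRC_N(E;R)$ does justify) that the paper leaves implicit.

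The step that fails as stated is the last one. You assert that because $\KRC_N(E;R)$ is a bounded complex of finitely generated free $R$-modules, the K\"unneth theorem degenerates and $H_*\bigl(C\otimes\KRC_N(E;R)\bigr)\cong H_*(C)\otimes H_*\bigl(\KRC_N(E;R)\bigr)$ with no $\Tor$ term, ``by freeness of the second factor.'' Freeness of the chain groups does not kill the $\Tor$ term: over $R=\Z$, take both $C$ and $C'$ to be the two-term complex $\Z\xrightarrow{\,2\,}\Z$ (all modules free and finitely generated); then
\[
H_1(C\otimes C')\cong \Tor_1^{\Z}(\Z/2,\Z/2)\cong\Z/2,
\qquad
H_1(C)\otimes H_0(C')\;\oplus\;H_0(C)\otimes H_1(C')=0 .
\]
What makes the K\"unneth correction vanish is flatness (e.g.\ freeness) of the \emph{homology} of one factor, or of its cycles and boundaries, not of its chain modules. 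So your argument establishes the chain-level splitting, but the passage to homology needs an additional input: either $H_*$ of one tensor factor is free over $R$ --- which is exactly what happens in the paper's application, where $J$ is an unknot whose crossingless diagram gives a complex with zero differential and free homology $R[X]/X^N$ --- or $R$ is a field, or one settles for a K\"unneth short exact sequence rather than an isomorphism. The paper's own two-sentence proof is silent on this point, but since you made the degeneration claim explicitly, it is the one step of your proposal that is incorrect as a general statement and needs to be repaired or accompanied by an appropriate flatness hypothesis.
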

\begin{proof}
	We fix a split diagram for $L \amalg J$, so that every MOY graph obtained from a complete resolution is the split union of MOY graphs obtained from diagrams of $L$ and $J$ individually. The result now follows from Corollary~\ref{cor:splitUnionFormulaForFN}, together with the fact that the isomorphism in the statement is natural under maps induced by split foams. 
\end{proof}

\section{The operator $\nabla$}\label{sec:ConstructionOfTheOp}

In this section, we construct the operator $\nabla$ on $\sl(N)$ link homology $\KR_N(L;R)$ when the characteristic of $R$ divides $N$. This operator is induced by a chain map on the chain complex $\KRC_N(D;R)$ associated to a diagram $D$ of the link. This chain map is defined to be the direct sum of operators that we define on each $\sr F_N(D_v;R)$. We begin with the construction of the operator on $\sr F_N(\Gamma;R)$ for any $\sl(N)$ MOY graph $\Gamma$. By abuse of notation, we will refer to the operator as $\nabla$ at every level of the construction. 

Let $R$ be a ring. We will later make an explicit assumption that the characteristic of $R$ divides $N$. 

\begin{df}
	Let $\nabla\colon R[X_1,\ldots,X_k] \to R[X_1,\ldots,X_k]$ be the $R$-linear map given by taking the sum over all $k$ first-order partial derivatives, which are taken formally. On monomials, the operator is given by \[
		\nabla(X_1^{d_1}\cdots X_k^{d_k}) = \sum_{i=1}^k d_i\cdot X_1^{d_1}\cdots X_i^{d_i - 1} \cdots X_k^{d_k}.
	\]
\end{df}

Observe that the operator $\nabla$ satisfies the Leibniz rule $\nabla(PQ) = \nabla(P)Q + P\nabla(Q)$, and its $n$-fold composite $\nabla^n$ satisfies the generalized Leibniz rule \[
	\nabla^n(PQ) = \sum_{i=0}^n \binom{n}{i} \nabla^{n-i}(P)\nabla^i(Q).
\]The following lemma highlights a feature of $\nabla$ that we will use a number of times. 

\begin{lem}\label{lem:identifyingVariablesCommutesWithD}
	Suppose $s\colon I \to J$ is a map between finite sets. If $\phi_s\colon R[X_i\:|\: i\in I] \to R[Y_j\:|\: j \in J]$ is the $R$-algebra map given by $\phi_s(X_i) = Y_{s(i)}$, then $\nabla \circ \phi_s = \phi_s \circ \nabla$. 
\end{lem}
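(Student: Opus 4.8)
The plan is to verify the identity $\nabla \circ \phi_s = \phi_s \circ \nabla$ by checking it on a generating set, namely the monomials in $R[X_i \mid i \in I]$, and then invoking $R$-linearity. Both $\nabla$ and $\phi_s$ are $R$-linear, and $\phi_s$ is moreover an $R$-algebra map, so it suffices to understand how each side behaves on a monomial $\mathbf{X}^{\mathbf{d}} = \prod_{i\in I} X_i^{d_i}$.

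First I would compute $\phi_s(\nabla(\mathbf{X}^{\mathbf{d}}))$. By definition $\nabla(\mathbf{X}^{\mathbf{d}}) = \sum_{i\in I} d_i X_i^{d_i - 1}\prod_{i'\neq i} X_{i'}^{d_{i'}}$, and applying the algebra map $\phi_s$ termwise turns this into $\sum_{i\in I} d_i\, Y_{s(i)}^{\,?}\cdots$; more precisely, $\phi_s$ sends $\mathbf{X}^{\mathbf{d}}$ to the monomial $\mathbf{Y}^{\mathbf{e}}$ where $e_j = \sum_{i\in s^{-1}(j)} d_i$, and lowering the exponent of one $X_i$ by one lowers $e_{s(i)}$ by one. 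So $\phi_s(\nabla(\mathbf{X}^{\mathbf{d}})) = \sum_{i\in I} d_i\, \mathbf{Y}^{\mathbf{e} - \delta_{s(i)}}$, grouping by the fiber: $= \sum_{j\in J}\bigl(\sum_{i\in s^{-1}(j)} d_i\bigr)\mathbf{Y}^{\mathbf{e}-\delta_j} = \sum_{j\in J} e_j\, \mathbf{Y}^{\mathbf{e}-\delta_j}$. On the other hand, $\nabla(\phi_s(\mathbf{X}^{\mathbf{d}})) = \nabla(\mathbf{Y}^{\mathbf{e}}) = \sum_{j\in J} e_j\, \mathbf{Y}^{\mathbf{e}-\delta_j}$ by the very definition of $\nabla$. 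The two expressions agree, which proves the lemma.

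The one genuine point requiring care — the ``main obstacle,'' such as it is — is the bookkeeping in the grouping step: that the coefficient $\sum_{i\in s^{-1}(j)} d_i$ produced by collecting the contributions of all preimages of $j$ is exactly the exponent $e_j$ of $Y_j$ in $\phi_s(\mathbf{X}^{\mathbf{d}})$, and that this equality of coefficients is what makes ``sum of partials then substitute'' equal to ``substitute then sum of partials.'' Conceptually this is just the chain rule: $\frac{\partial}{\partial Y_j}(f\circ \phi_s) = \sum_{i\in s^{-1}(j)} \bigl(\frac{\partial f}{\partial X_i}\bigr)\circ\phi_s$, summed over $j$. I would present the monomial computation cleanly and note the chain-rule interpretation as a remark; no further subtlety arises, since everything is $R$-linear and there are no convergence or well-definedness issues for these polynomial operators.
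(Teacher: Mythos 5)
Your computation is correct: reducing to monomials by $R$-linearity and then checking the identity by explicit exponent bookkeeping (the coefficient $\sum_{i\in s^{-1}(j)} d_i$ collected over each fiber is exactly the exponent $e_j$ of $Y_j$ in $\phi_s(\mathbf{X}^{\mathbf{d}})$) does prove the lemma, and the vanishing-coefficient convention when some $d_i=0$ or $e_j=0$ causes no trouble, exactly as in the paper's definition of $\nabla$. The paper takes a slightly different route after the same reduction to monomials: it first observes the identity on each single indeterminate $X_i$, and then propagates it to all monomials by induction on degree using the Leibniz rule $\nabla(PQ)=\nabla(P)Q+P\nabla(Q)$, writing any monomial of degree at least two as a product of lower-degree monomials. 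The paper's argument buys brevity and avoids multi-index bookkeeping by leaning on the product rule, which is already a central tool in the rest of the construction; your direct calculation buys an explicit formula, in effect the chain rule $\frac{\partial}{\partial Y_j}(f\circ\phi_s)=\sum_{i\in s^{-1}(j)}\bigl(\frac{\partial f}{\partial X_i}\bigr)\circ\phi_s$, which makes transparent \emph{why} identifying variables commutes with the sum of partials rather than just verifying that it does. Either write-up is acceptable; if you keep yours, state the fiber-grouping step carefully, since that is the only place where an error could hide.
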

\begin{proof}
	By linearity, it suffices to verify the formula for monomials. Furthermore, it is clear that $\nabla\circ\phi_s = \phi_s\circ \nabla$ is valid on each individual indeterminant $X_i$ for $i \in I$. The result then follows by induction using the Leibniz rule. Any monomial of degree at least two may be written as a product of monomials $P,Q$ of strictly lower degree, so \begin{align*}
		\nabla(\phi_s(PQ)) &= \nabla(\phi_s(P))\phi_s(Q) + \phi_s(P)\nabla(\phi_s(Q))\\
		&= \phi_s(\nabla(P))\phi_s(Q) + \phi_s(P)\phi_s(\nabla(Q)) = \phi_s(\nabla(PQ)).\qedhere
	\end{align*}
\end{proof}

\begin{df}\label{df:opDonRFoamN}
	Let $\bo{F} = PF$ be a dotted foam from $\Gamma_0$ to $\Gamma_1$, where $F$ is the underlying foam without dots and $P \in R[F]$. Let $\nabla\colon \RFoam_N(\Gamma_0,\Gamma_1) \to \RFoam_N(\Gamma_0,\Gamma_1)$ be the $R$-linear map defined by\[
		\nabla(PF) = \nabla(P) F.
	\]
\end{df}

\begin{rem}
	If $Q$ is an arbitrary polynomial in $R[F]$, then $\nabla(QF) = \nabla(Q)F$. The identity is the definition of $\nabla$ when $Q$ arises from a set of dots, and follows in general by linearity. Furthermore, the Leibniz rule for $\nabla$ implies that $\nabla(Q\bo{F}) = \nabla(Q)\bo{F} + Q\nabla(\bo{F})$. We also note that $\nabla$ is homogeneous of degree $-2$. 
\end{rem}

\begin{lem}\label{lem:LeibnizForComposition}
	If $\bo{F}\colon \Gamma_0 \to \Gamma_1$ and $\bo{G}\colon \Gamma_1 \to \Gamma_2$ are dotted foams, then \[
		\nabla(\bo{F}\cup \bo{G}) = \nabla(\bo{F}) \cup \bo{G} + \bo{F} \cup \nabla(\bo{G}).
	\]
\end{lem}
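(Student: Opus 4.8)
The plan is to reduce the identity to the ordinary Leibniz rule for the differential operator $\nabla$ on a polynomial ring, using Lemma~\ref{lem:identifyingVariablesCommutesWithD} to absorb the identification of facet variables that takes place when foams are glued. Write $\bo F = PF$ and $\bo G = QG$ with $P\in R[F]$ and $Q\in R[G]$, where $F,G$ are the underlying dotless foams. The variables of $R[F]$ and of $R[G]$ are indexed by pairs (facet, index), one block of $\ell(f)$ variables per facet $f$; since these two variable sets are disjoint, the product $PQ$ makes sense in the polynomial ring $R[F]\otimes R[G]$ and records all the dots of $\bo F$ together with all the dots of $\bo G$.

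The first step is to describe the gluing combinatorially. Since $F$ and $G$ meet $\R^2\x\{1\}$ transversely, each edge of $\Gamma_1$ lies in the closure of exactly one facet of $F$ and exactly one facet of $G$, and gluing along $\Gamma_1$ identifies each such pair. The facets of $F\cup G$ are therefore the equivalence classes of facets of $F\sqcup G$ under the relation generated by these identifications; all facets in a class share a common label, and gluing identifies their variable blocks (note a single class may contain several facets of $F$ and several of $G$). This yields a map of finite sets $s$ from the variables of $R[F]\otimes R[G]$ onto the variables of $R[F\cup G]$, hence an $R$-algebra map $\phi_s$ as in Lemma~\ref{lem:identifyingVariablesCommutesWithD}, which restricts to $R[F]\otimes R[G]\to R[F\cup G]$ because a product of polynomials symmetric in individual blocks maps to a polynomial symmetric in the merged block. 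Inspecting how dots accumulate on glued facets, one gets $\bo F\cup\bo G=\phi_s(PQ)\,(F\cup G)$ in $\RFoam_N(\Gamma_0,\Gamma_2)$.

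The remaining computation is then formal. By Definition~\ref{df:opDonRFoamN}, $\nabla(\bo F\cup\bo G)=\nabla(\phi_s(PQ))\,(F\cup G)$; by Lemma~\ref{lem:identifyingVariablesCommutesWithD} this equals $\phi_s(\nabla(PQ))\,(F\cup G)$; by the Leibniz rule $\nabla(PQ)=\nabla(P)Q+P\nabla(Q)$, where $\nabla$ is the sum of all partials in the combined variable set, which on $P$ (resp.\ $Q$) agrees with the sum of partials over the $F$-variables (resp.\ $G$-variables) since the other partials kill it — this is exactly the $\nabla$ of Definition~\ref{df:opDonRFoamN}, and $\nabla(P)\in R[F]$ because $\sum_j\partial/\partial(X_f)_j$ commutes with permuting the $f$-block, as implicit in the Remark following Definition~\ref{df:opDonRFoamN}. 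Since $\phi_s$ is an $R$-algebra homomorphism, $\phi_s(\nabla(PQ))=\phi_s(\nabla(P))\phi_s(Q)+\phi_s(P)\phi_s(\nabla(Q))$, and reapplying the description of composition from the previous step identifies $\phi_s(\nabla(P))\phi_s(Q)\,(F\cup G)$ with $(\nabla(P)F)\cup(QG)=\nabla(\bo F)\cup\bo G$ and $\phi_s(P)\phi_s(\nabla(Q))\,(F\cup G)$ with $\bo F\cup\nabla(\bo G)$. Adding gives the claim.

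The main obstacle is step two: pinning down precisely that gluing dotted foams corresponds to applying the variable-identification map $\phi_s$ to the product $PQ$. This is the only place geometry enters, and only the local structure of $F$ and $G$ near the interface $\Gamma_1$ is used; everything afterward is the partial-derivative Leibniz rule plus the fact that $\phi_s$ is an algebra map plus Lemma~\ref{lem:identifyingVariablesCommutesWithD}. In particular the hypothesis that the characteristic of $R$ divides $N$ plays no role in this lemma.
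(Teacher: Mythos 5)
Your proposal is correct and follows essentially the same route as the paper: write $\bo F = PF$, $\bo G = QG$, express the glued dotted foam as $\phi(P\otimes Q)(F\cup G)$ for the variable-identification algebra map $\phi$, and then combine Lemma~\ref{lem:identifyingVariablesCommutesWithD} with the polynomial Leibniz rule. Your treatment of the gluing step is just a more explicit spelling-out of what the paper calls the ``canonical map $\phi\colon R[F]\otimes R[G]\to R[F\cup G]$ induced by the gluing.''
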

\begin{proof}
	Write $\bo{F} = PF$ and $\bo{G} = QG$ where $P \in R[F]$ and $Q \in R[G]$. There is a canonical map $\phi\colon R[F] \otimes R[G] \to R[F \cup G]$ induced by the gluing. If $f$ is a facet of either $F$ or $G$, then $f$ is a portion of some facet $h$ of $F \cup G$, where $\ell(f) = \ell(h)$. The map $\phi$ sends each symmetric polynomial in the variables $(X_f)_1,\ldots,(X_f)_{\ell(f)}$ to the corresponding symmetric polynomial in the variables $(X_h)_1,\ldots,(X_h)_{\ell(h)}$. With this notation, we have $\bo{F} \cup \bo{G} = \phi(P \otimes Q)(F \cup G)$. 

	Note that $R[F] \otimes R[G]$ is a subring of a polynomial ring, and that it is preserved under $\nabla$. By Lemma~\ref{lem:identifyingVariablesCommutesWithD}, we have that $\nabla$ commutes with $\phi$. Thus \[
		\nabla(\phi(P \otimes Q)) = \phi(\nabla(P \otimes Q)) = \phi(\nabla(P) \otimes Q) + \phi(P \otimes \nabla(Q))
	\]as required. 
\end{proof}

For the following lemma, we temporarily let $R = \Z$ so that if $\bo{F}$ is a dotted foam, then $\nabla\bo{F}$ is a formal $\Z$-linear combination of dotted foams. 
The Robert--Wagner evaluation of a $\Z$-linear combination of dotted foams is defined by extending the evaluation $\Z$-linearly. 

\begin{lem}\label{lem:DcommutesWithEvaluation}
	Let $\bo{F}$ be a closed dotted $\sl(N)$ foam. Then \[
		\langle \nabla\bo{F} \rangle = \nabla\langle \bo{F}\rangle.
	\]
\end{lem}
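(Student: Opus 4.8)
The statement $\langle \nabla\bo{F}\rangle = \nabla\langle\bo{F}\rangle$ is an identity between integral symmetric polynomials in $X_1,\ldots,X_N$, so the plan is to verify it coloring by coloring, using the explicit Robert--Wagner formula $\langle\bo{F}\rangle = \sum_c (-1)^{s(F,c)} P(\bo{F},c)/Q(F,c)$. Since $\nabla$ acts $\Z$-linearly on both sides and $s(F,c)$ and $Q(F,c)$ depend only on the underlying undotted foam $F$, the whole problem reduces to showing, for each fixed coloring $c$, that
\[
	\nabla\!\left(\frac{P(\bo{F},c)}{Q(F,c)}\right) = \frac{\nabla(P(\bo{F},c))}{Q(F,c)},
\]
where on the left $\nabla$ is the operator $\sum_{i=1}^N \partial/\partial X_i$ on (rational functions in) $X_1,\ldots,X_N$, and on the right $\nabla$ is the foam operator of Definition~\ref{df:opDonRFoamN}, i.e. it differentiates only the dot-polynomial $P$ and leaves $F$ alone. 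The key point will be that $\nabla$ annihilates $Q(F,c)$, or more precisely that $Q(F,c)$ behaves as a constant for $\nabla$ in the relevant sense.

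\textbf{Key steps.} First I would record that $Q(F,c) = \prod_{i<j}(X_i - X_j)^{\chi(F_{ij}(c))/2}$ is a monomial in the differences $X_i - X_j$, and that each factor $X_i - X_j$ is killed by $\nabla = \sum_k \partial/\partial X_k$, since $\partial(X_i - X_j)/\partial X_i = 1$ and $\partial(X_i - X_j)/\partial X_j = -1$ cancel. Hence by the Leibniz rule for $\nabla$ on the ring of rational functions, $\nabla(Q(F,c)) = 0$, and more to the point $\nabla(Q(F,c)^{-1}) = 0$ (differentiate $Q \cdot Q^{-1} = 1$, or note $\nabla(Q^{-1}) = -Q^{-2}\nabla(Q) = 0$, valid because $\nabla$ obeys the quotient rule wherever the denominator is invertible). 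Next, applying the Leibniz rule to the product $P(\bo{F},c)\cdot Q(F,c)^{-1}$ gives
\[
	\nabla\!\left(\frac{P(\bo{F},c)}{Q(F,c)}\right) = \nabla(P(\bo{F},c))\cdot Q(F,c)^{-1} + P(\bo{F},c)\cdot\nabla(Q(F,c)^{-1}) = \frac{\nabla(P(\bo{F},c))}{Q(F,c)}.
\]
Finally I need to check that the polynomial $\nabla(P(\bo{F},c))$ appearing here is exactly the image under the coloring substitution of the dot-polynomial attached to the foam $\nabla\bo{F}$. Recall $P(\bo{F},c) = \prod_f P_f(X_{c(f)})$, where $P_f$ is a symmetric polynomial in the $\ell(f)$ formal variables of the facet $f$, and $\nabla\bo{F}$ is by definition $\prod_f \nabla(P_f)$ acting facet-by-facet (extended linearly); since the colorings of distinct facets use disjoint sets of pigment variables, Lemma~\ref{lem:identifyingVariablesCommutesWithD} (applied with the substitution $(X_f)_m \mapsto X_{i}$) and the Leibniz rule across the product over facets together show that substituting colors and then applying $\nabla$ agrees with applying $\nabla$ facet-wise and then substituting. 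Summing over all colorings $c$ then yields $\langle\nabla\bo{F}\rangle = \nabla\langle\bo{F}\rangle$.

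\textbf{Main obstacle.} The only genuinely delicate point is the bookkeeping in the last step: matching the operator $\nabla$ on $\RFoam_N$ (which is defined through elementary symmetric polynomials $e_1,\ldots,e_{\ell(f)}$ in abstract facet variables) with the honest differential operator $\sum_i \partial/\partial X_i$ after the coloring substitution has identified those facet variables with a subset $\{X_i : i \in c(f)\}$ of the global pigment variables. One must be careful that the substitution $\phi$ sending $(X_f)_m$ to the pigment variables is not injective globally — different facets sharing a binding use overlapping pigments — but it \emph{is} injective on each facet's variables, and $\nabla$ is applied before any identification that would cause trouble; Lemma~\ref{lem:identifyingVariablesCommutesWithD} is exactly the tool that licenses commuting $\nabla$ past this substitution. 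Once that compatibility is nailed down, the rest is the elementary observation that $\nabla$ kills differences of variables, hence kills $Q(F,c)^{\pm 1}$.
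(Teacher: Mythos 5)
Your proposal is correct and follows essentially the same route as the paper: reduce to a single coloring, use $\nabla(X_i - X_j) = 0$ to pass $\nabla$ through $Q(F,c)$, and invoke Lemma~\ref{lem:identifyingVariablesCommutesWithD} to identify $\nabla(P(\bo{F},c))$ with $P(\nabla(\bo{F}),c)$ after the coloring substitution (which, as you note in your final paragraph, needs no injectivity, so the earlier aside about disjoint pigment sets is unnecessary and in fact false in general). The only packaging difference is that the paper avoids extending $\nabla$ to rational functions by clearing denominators with a polynomial $W$ divisible by each $Q(F,c)$, whereas you use the quotient rule on the localization; both are fine.
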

\begin{proof}
	We first claim that \[
		\nabla\langle\bo{F}\rangle = \sum_c (-1)^{s(F,c)} \frac{\nabla(P(\bo{F},c))}{Q(F,c)}.
	\]Let $W$ be a product of terms of the form $X_i - X_j$ for $i < j$ for which $W/Q(F,c)$ is a polynomial for each $c$. Such a polynomial $W$ exists by definition of $Q(F,c)$. Then we clear denominators with $W$ to obtain \[
		W\langle\bo{F}\rangle = \sum_c (-1)^{s(F,c)}P(\bo{F},c)\frac{W}{Q(F,c)}.
	\]Both $W$ and $W/Q(F,c)$ are products of terms of the form $X_i - X_j$. Since $\nabla(X_i - X_j) = 0$, it follows that $\nabla$ commutes with multiplication by $W$ and $W/Q(F,c)$ by the Leibniz rule. Thus \[
		W\nabla\langle\bo{F}\rangle = \sum_c (-1)^{s(F,c)} \nabla(P(\bo{F},c)) \frac{W}{Q(F,c)}
	\]which proves the claim.

	It suffices to prove that \[
		\langle \nabla\bo{F}\rangle = \sum_c (-1)^{s(F,c)} \frac{\nabla(P(\bo{F},c))}{Q(F,c)}.
	\]If $S \in \Z[F]$, then \[
		\langle SF\rangle = \sum_c (-1)^{s(F,c)} \frac{P(SF,c)}{Q(F,c)}
	\]where $P(SF,c)$ is obtained from $S$ by replacing the formal variables $(X_f)_1,\ldots,(X_f)_{\ell(f)}$ with the variables $X_i$ for $i \in c(f)$. This identity is true by definition if $S$ arises from a dotted foam, and is straightforward to verify in general by linearity using the fact that $s(F,c)$ and $Q(F,c)$ depend only on the underlying foam and the coloring. If $\bo{F} = SF$, then the result follows from the identity $\nabla(P(SF,c)) = P(\nabla(S)F,c)$ which in turn follows from Lemma~\ref{lem:identifyingVariablesCommutesWithD}.
\end{proof}

\begin{lem}\label{lem:evaluationOfDFisZeroModN}
	If the characteristic of $R$ divides $N$, then \[
		\langle \nabla\bo{F}\rangle_R = 0
	\]for every closed dotted $\sl(N)$ foam $\bo{F}$. 
\end{lem}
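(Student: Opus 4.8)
The plan is to combine Lemma~\ref{lem:DcommutesWithEvaluation} with the degree considerations already recorded in the excerpt. By Lemma~\ref{lem:DcommutesWithEvaluation}, working temporarily over $\Z$, we have $\langle \nabla\bo{F}\rangle = \nabla\langle\bo{F}\rangle$ as elements of $\Z[X_1,\ldots,X_N]^{\fk S_N}$, and reducing mod the characteristic of $R$ gives $\langle \nabla\bo{F}\rangle_R = \nabla\langle\bo{F}\rangle_R$, where here $\nabla$ denotes $\sum_i \partial/\partial X_i$ acting on the polynomial ring over $R$. So it suffices to show that $\nabla\langle\bo{F}\rangle$, reduced mod the characteristic, lands in the kernel of the augmentation $\Z[X_1,\ldots,X_N]\to R$ sending every $X_i$ to $0$; equivalently, that the constant term of $\nabla\langle\bo{F}\rangle$ vanishes in $R$.

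First I would observe that $\langle\bo{F}\rangle$ is a homogeneous symmetric polynomial of degree $\tfrac12\deg(\bo{F})$ (recall the quantum degree of $\langle\bo{F}\rangle$ is $\deg(\bo{F})$). If $\deg(\bo{F})\neq 2$, then $\nabla\langle\bo{F}\rangle$ is homogeneous of degree $\tfrac12\deg(\bo{F})-1\neq 0$, hence has zero constant term, and the claim is immediate (this already covers the interesting cases where $\deg(\bo{F})=0$, for which $\langle\bo{F}\rangle$ is a constant and $\nabla$ kills it outright). So the only case needing work is $\deg(\bo{F})=2$, i.e.\ $\langle\bo{F}\rangle$ is homogeneous of degree $1$. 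A degree-$1$ symmetric polynomial in $X_1,\ldots,X_N$ is a $\Z$-multiple of $e_1 = X_1+\cdots+X_N$, say $\langle\bo{F}\rangle = m\,e_1$ for some $m\in\Z$. Then $\nabla\langle\bo{F}\rangle = mN$, a constant. Its image in $R$ is $mN\cdot 1_R$, which is zero precisely because the characteristic of $R$ divides $N$. This is the crux: the hypothesis enters exactly here, through $N\cdot 1_R = 0$.

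The main obstacle — really the only thing to be careful about — is making sure the degree-$1$ case is argued cleanly, i.e.\ that one correctly identifies the space of degree-$1$ symmetric polynomials as the rank-one free $\Z$-module spanned by $e_1$, and that one tracks the factor of $N$ produced by $\nabla(e_1)$. One should also note at the outset that if $\deg(\bo{F})$ is odd or negative then $\langle\bo{F}\rangle = 0$ and there is nothing to prove, so without loss of generality $\deg(\bo{F})$ is a nonnegative even integer and the dichotomy ``$\deg(\bo{F})=2$ versus $\deg(\bo{F})\neq 2$'' is exhaustive. Assembling these pieces: for $\deg(\bo{F})\neq 2$ the constant term of $\nabla\langle\bo{F}\rangle$ vanishes over $\Z$ already, and for $\deg(\bo{F})=2$ it is an integer multiple of $N$, hence vanishes in $R$ by hypothesis. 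In either case $\langle\nabla\bo{F}\rangle_R = \nabla\langle\bo{F}\rangle_R$ has vanishing image under $X_i\mapsto 0$, which is to say $\langle\nabla\bo{F}\rangle_R = 0$, as desired.
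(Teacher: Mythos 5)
Your proposal is correct and follows essentially the same route as the paper: apply Lemma~\ref{lem:DcommutesWithEvaluation} to reduce to evaluating $\nabla\langle\bo{F}\rangle$ under $X_i\mapsto 0$, note that by homogeneity only the case where $\langle\bo{F}\rangle$ has ordinary degree $1$ matters, and there $\langle\bo{F}\rangle=m(X_1+\cdots+X_N)$ gives $\nabla\langle\bo{F}\rangle=mN$, which vanishes in $R$ since the characteristic divides $N$. The extra bookkeeping about odd or negative degrees is harmless but not needed beyond the paper's one-line homogeneity remark.
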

\begin{proof}
	By Lemma~\ref{lem:DcommutesWithEvaluation}, $\langle \nabla\bo{F} \rangle_R$ is the image of $\nabla\langle \bo{F}\rangle$ under the map $\Z[X_1,\ldots,X_N] \to R$ given by sending each $X_i$ to $0$. Since $\nabla\langle\bo{F}\rangle$ is homogeneous, it suffices to consider the case that the ordinary degree of the polynomial $\langle\bo{F}\rangle$ is $1$ (equivalently, the quantum degree is $2$). In this case, it follows that \[
		\langle\bo{F}\rangle = m(X_1 + \cdots + X_N)
	\]for some $m \in \Z$ because $\langle\bo{F}\rangle$ is symmetric. Thus $\nabla\langle\bo{F}\rangle = mN$ by definition of $\nabla$. Because the characteristic of $R$ divides $N$, the image of $\nabla\langle\bo{F}\rangle$ in $R$ is zero. 
\end{proof}

\begin{prop}\label{prop:DwellDefinedModN}
	If the characteristic of $R$ divides $N$, then $\nabla\colon \RFoam_N(\emp,\Gamma) \to \RFoam(\emp,\Gamma)$ descends to a well-defined operator $\nabla\colon \sr F_N(\Gamma;R) \to \sr F_N(\Gamma;R)$. 
\end{prop}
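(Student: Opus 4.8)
The plan is to use the nondegeneracy of the pairing $\langle-,-\rangle_R$ together with the fact that $\nabla$ is ``essentially self-adjoint up to a sign.'' Recall that $\sr F_N(\Gamma;R)$ is the quotient of $\RFoam_N(\emp,\Gamma)$ by the kernel of the pairing $\langle-,-\rangle_R$, so to show $\nabla$ descends it suffices to show that if $\sum_i a_i\bo F_i$ represents zero in $\sr F_N(\Gamma;R)$, then so does $\nabla\!\left(\sum_i a_i\bo F_i\right) = \sum_i a_i\,\nabla\bo F_i$; that is, for every dotted foam $\bo G\colon\emp\to\Gamma$ we must check $\sum_i a_i\langle\nabla\bo F_i,\bo G\rangle_R = 0$.

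First I would establish the adjunction-type identity $\langle\nabla\bo F,\bo G\rangle_R + \langle\bo F,\nabla\bo G\rangle_R = 0$ for dotted foams $\bo F,\bo G\colon\emp\to\Gamma$. By definition $\langle\nabla\bo F,\bo G\rangle_R = \langle(\nabla\bo F)\cup\widebar{\bo G}\rangle_R$, and by Lemma~\ref{lem:LeibnizForComposition} we have $\nabla(\bo F\cup\widebar{\bo G}) = (\nabla\bo F)\cup\widebar{\bo G} + \bo F\cup\nabla(\widebar{\bo G})$. Since mirroring across a plane does not affect the facet variables, $\nabla(\widebar{\bo G}) = \widebar{\nabla\bo G}$, so $\nabla(\bo F\cup\widebar{\bo G}) = (\nabla\bo F)\cup\widebar{\bo G} + \bo F\cup\widebar{\nabla\bo G}$. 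Applying $\langle-\rangle_R$ and invoking Lemma~\ref{lem:evaluationOfDFisZeroModN} (which uses that $\Char R \mid N$) gives $0 = \langle(\nabla\bo F)\cup\widebar{\bo G}\rangle_R + \langle\bo F\cup\widebar{\nabla\bo G}\rangle_R = \langle\nabla\bo F,\bo G\rangle_R + \langle\bo F,\nabla\bo G\rangle_R$, as claimed. Extending $R$-bilinearly, the same identity holds for arbitrary elements of $\RFoam_N(\emp,\Gamma)$.

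Given this identity, the descent is immediate: if $\sum_i a_i\bo F_i$ represents zero in $\sr F_N(\Gamma;R)$, then for every dotted foam $\bo G\colon\emp\to\Gamma$,
\[
	\sum_i a_i\langle\nabla\bo F_i,\bo G\rangle_R = -\sum_i a_i\langle\bo F_i,\nabla\bo G\rangle_R = 0,
\]
because $\nabla\bo G$ is itself a $\Z$-linear combination of dotted foams and $\sum_i a_i\bo F_i$ pairs trivially with all of them. Hence $\sum_i a_i\,\nabla\bo F_i$ lies in the kernel of the pairing and represents zero in $\sr F_N(\Gamma;R)$, so $\nabla$ is well-defined on the quotient.

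The only step requiring genuine care is the adjunction identity, and within it the claim that $\nabla$ commutes with mirroring. This is a formal point: mirroring acts on the underlying stratified space and reverses the cyclic ordering at bindings, but it carries each facet $f$ to a facet with the same label $\ell(f)$ and identifies the associated symmetric-polynomial variables, so $\nabla$ — being defined purely in terms of these variables via Definition~\ref{df:opDonRFoamN} — commutes with it. The essential input is Lemma~\ref{lem:evaluationOfDFisZeroModN}, which is where the hypothesis $\Char R \mid N$ enters; everything else is the Leibniz rule of Lemma~\ref{lem:LeibnizForComposition} and the construction of $\sr F_N(\Gamma;R)$ as a quotient by the kernel of a pairing.
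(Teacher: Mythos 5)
Your proposal is correct and is essentially the paper's own argument: an ``integration by parts'' using the Leibniz rule (Lemma~\ref{lem:LeibnizForComposition}) together with the vanishing $\langle\nabla\bo{F}\rangle_R = 0$ from Lemma~\ref{lem:evaluationOfDFisZeroModN} to show that the kernel of the pairing is preserved. The only difference is presentational: the paper pairs against arbitrary dotted foams $\bo{G}\colon\Gamma\to\emp$ and leaves the compatibility of $\nabla$ with mirroring implicit, whereas you spell out $\nabla(\widebar{\bo{G}}) = \widebar{\nabla\bo{G}}$ explicitly, which is a correct and harmless extra check.
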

\begin{proof}
	We use ``integration by parts''. Suppose $\sum_i a_i \bo{F}_i \in \RFoam_N(\emp,\Gamma)$ represents zero in $\sr F_N(\Gamma;R)$. We must show that $\sum_i a_i \nabla(\bo{F}_i)$ also represents zero in $\sr F_N(\Gamma;R)$. Let $\bo{G}\colon \Gamma\to\emp$ be an arbitrary dotted foam, and observe that \[
		0 = \sum_i a_i \langle \nabla(\bo{F}_i \cup \bo{G})\rangle_R = \sum_i a_i \langle \nabla(\bo{F}_i) \cup \bo{G}\rangle_R + \sum_i a_i \langle \bo{F}_i \cup \nabla(\bo{G})\rangle_R
	\]where the first equality follows from Lemma~\ref{lem:evaluationOfDFisZeroModN} and the second equality is the Leibniz rule (Lemma~\ref{lem:LeibnizForComposition}). The second sum on the right-hand side vanishes by assumption that $\sum_i a_i \bo{F}_i$ represents zero in $\sr F_N(\Gamma;R)$, so the first sum vanishes as well.
\end{proof}

\begin{lem}\label{lem:FGCommutesWithD}
	If $\bo{G}\colon \Gamma_0 \to \Gamma_1$ is a dotted foam for which $\nabla(\bo{G}) = 0$, then $\sr F_N(\bo{G};R)\colon \sr F_N(\Gamma_0;R) \to \sr F_N(\Gamma_1;R)$ commutes with $\nabla$. In particular, if $\bo{G}$ has no dots, then $\sr F_N(\bo{G};R)$ commutes with $\nabla$.
\end{lem}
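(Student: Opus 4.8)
The plan is to reduce the statement to the Leibniz rule for composition already established in Lemma~\ref{lem:LeibnizForComposition}, using the fact (Definition of $\sr F_N(\bo G;R)$) that the induced map on $\sr F_N(\Gamma_0;R)$ is given on representatives by $\bo F \mapsto \bo F \cup \bo G$. First I would fix an arbitrary dotted foam $\bo F\colon \emp \to \Gamma_0$ representing a class in $\sr F_N(\Gamma_0;R)$ and compute $\nabla(\sr F_N(\bo G;R)\bo F) = \nabla(\bo F \cup \bo G)$. By Lemma~\ref{lem:LeibnizForComposition}, this equals $\nabla(\bo F)\cup \bo G + \bo F \cup \nabla(\bo G)$. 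The hypothesis $\nabla(\bo G)=0$ kills the second term, leaving $\nabla(\bo F)\cup \bo G = \sr F_N(\bo G;R)(\nabla(\bo F))$, which is exactly the commutation $\nabla \circ \sr F_N(\bo G;R) = \sr F_N(\bo G;R)\circ \nabla$ on the level of representatives. Since $\nabla$ is well-defined on $\sr F_N(\Gamma_0;R)$ and $\sr F_N(\Gamma_1;R)$ by Proposition~\ref{prop:DwellDefinedModN}, and $\sr F_N(\bo G;R)$ is well-defined by the universal construction, the identity descends to the quotients. Then I would extend $R$-linearly to arbitrary elements $\sum_i a_i \bo F_i$, which is immediate since both $\nabla$ and $\sr F_N(\bo G;R)$ are $R$-linear.

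For the final sentence, I would simply note that if $\bo G$ has no dots then $\bo G = 1\cdot G$ in the notation of Definition~\ref{df:opDonRFoamN}, so $\nabla(\bo G) = \nabla(1)G = 0$ since $\nabla$ annihilates constants; hence the first part of the lemma applies.

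There is essentially no obstacle here: the lemma is a formal consequence of the composition Leibniz rule and the definitions of the relevant maps. The only point requiring a moment's care is confirming that the computation, carried out on chosen representatives in $\RFoam_N(\emp,\Gamma_0)$, is compatible with passage to the quotients $\sr F_N$ — but this is guaranteed by Proposition~\ref{prop:DwellDefinedModN} and the construction of $\sr F_N(\bo G;R)$, so I would state it in one line rather than belabor it.
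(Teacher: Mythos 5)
Your proposal is correct and follows essentially the same route as the paper, which simply cites the Leibniz rule of Lemma~\ref{lem:LeibnizForComposition}; you have just written out the one-line computation $\nabla(\bo F \cup \bo G) = \nabla(\bo F)\cup\bo G + \bo F\cup\nabla(\bo G) = \nabla(\bo F)\cup\bo G$ together with the (routine) remarks on well-definedness on the quotients and linearity. The observation that a dotless $\bo G$ satisfies $\nabla(\bo G)=\nabla(1)G=0$ is exactly the intended justification of the final sentence.
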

\begin{proof}
	The result follows from the Leibniz rule (Lemma~\ref{lem:LeibnizForComposition}).
\end{proof}

\vspace{10pt}

Let $D$ be a diagram of an oriented link $L$ with its crossings ordered, and let $\KRC_N(D;R)$ be its $\sl(N)$ chain complex. 

\begin{df}
	Let $R$ be a ring whose characteristic divides $N$. Let \[
		\nabla\colon \KRC_N(D;R) \to \KRC_N(D;R)
	\]be the direct sum of the maps $\nabla\colon \sr F_N(D_v;R) \to \sr F_N(D_v;R)$ over all complete resolutions $v$. We note that $\nabla$ preserves homological grading and decreases quantum grading by $2$.
\end{df}

The differential of $\KRC_N(D;R)$ is given by the signed sum of maps induced by foams without dots, so $\nabla\colon \KRC_N(D;R) \to \KRC_N(D;R)$ is a chain map by Lemma~\ref{lem:FGCommutesWithD}. It follows that $\nabla$ descends to a map on $\sl(N)$ link homology. The map $\nabla\colon \KR_N(L;R) \to \KR_N(L;R)$ does not depend on the choice of diagram, though we will not make use of this fact. The proof stems from the observation that the homotopy equivalences defined between chain complexes associated to diagrams differing by Reidemeister moves are all induced by maps of foams without dots (see for example the proof of invariance in \cite[section 7]{MR2491657}). 

\section{Proof of the main theorem}\label{sec:ReducedslNLinkHomology}

\begin{lem}\label{lem:equivarianceNprime}
	Let $\Gamma$ be an $\sl(N)$ MOY graph, and let $X,Y,Z$ be weight $1$ basepoint operators associated to three basepoints lying on edges of $\Gamma$ that are labeled $1$. Assume that the characteristic of $R$ divides $N$ so that the operator $\nabla$ on $\sr F_N(\Gamma;R)$ is defined. Then the map \[
		X^{N-1}\nabla^{N-1}\colon Y^{N-1}\sr F_N(\Gamma;R) \to X^{N-1}\sr F_N(\Gamma;R)
	\]intertwines the action of $Z$ on $Y^{N-1}\sr F_N(\Gamma;R)$ with the action of $Z-Y$ on $X^{N-1}\sr F_N(\Gamma;R)$, which is to say that\[
		(X^{N-1}\nabla^{N-1})\circ Z = (Z - Y) \circ (X^{N-1}\nabla^{N-1}).
	\]
\end{lem}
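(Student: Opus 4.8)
The plan is to pass to the algebra of operators generated by $\nabla, X, Y, Z$ acting on $\sr F_N(\Gamma;R)$ — all of which are defined here because $\Char R$ divides $N$ (Proposition~\ref{prop:DwellDefinedModN}) — and to reduce the assertion to a short commutator computation that is closed off by the vanishing $Y^N = 0$ (Lemma~\ref{lem:weight1basepointOpHasNthPowerZero}). First I would record the relevant relations. Each of $X, Y, Z$ is the addition of a dot to a label-$1$ facet, i.e.\ multiplication by a single formal variable, so the three operators commute with one another; in particular $Z$ preserves $Y^{N-1}\sr F_N(\Gamma;R)$ and $Z - Y$ preserves $X^{N-1}\sr F_N(\Gamma;R)$, so the two maps in the statement make sense. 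The Leibniz rule for $\nabla$ (the remark following Definition~\ref{df:opDonRFoamN}), applied to $W\bo{F} = (\text{variable})\cdot\bo{F}$ for $W \in \{X, Y, Z\}$ and using that $\nabla$ sends that variable to $1$, gives $\nabla W = \Id + W\nabla$; telescoping then gives $[\nabla^k, W] = k\nabla^{k-1}$ for every $k \ge 1$.

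With these in hand I would carry out the reduction. Write $n = N - 1$ and evaluate both composites on a general element $\xi = Y^n a$ of $Y^{N-1}\sr F_N(\Gamma;R)$. Using $[\nabla^n, Z] = n\nabla^{n-1}$ together with the commutativity of $X$ with $Y$ and $Z$, the difference $(X^n\nabla^n)(Z\xi) - (Z - Y)(X^n\nabla^n)(\xi)$ collapses to $X^n(n\nabla^{n-1} + Y\nabla^n)\xi$, so it suffices to prove that $(n\nabla^{n-1} + Y\nabla^n)Y^n$ is the zero operator on $\sr F_N(\Gamma;R)$. Substituting $Y\nabla^n = \nabla^n Y - n\nabla^{n-1}$, the two terms telescope: $n\nabla^{n-1}Y^n + Y\nabla^n Y^n = n\nabla^{n-1}Y^n + \nabla^n Y^{n+1} - n\nabla^{n-1}Y^n = \nabla^n Y^{n+1} = \nabla^{N-1}Y^N = 0$.

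I do not expect a real obstacle once the operator formalism is set up; the one step worth singling out is the recognition that the two error terms produced by commuting $\nabla$ past $Z$ and past $Y$ cancel in exactly the way that pushes the power of $Y$ up to $N$, after which Lemma~\ref{lem:weight1basepointOpHasNthPowerZero} finishes everything. Beyond that it is a matter of careful bookkeeping — $\nabla$ commutes with none of $X, Y, Z$, so commutators must be tracked honestly — but the fact that the three basepoint operators commute with each other is what lets $X^n$ be factored out of every term along the way.
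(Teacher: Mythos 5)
Your proof is correct, but it is organized along a genuinely different route than the paper's, with a different decisive input. The paper applies the generalized Leibniz rule directly to a dotted foam $\bo{F}$: with $M=N-1$ it expands $\nabla^{M}(ZY^{M}\bo{F})+(Y-Z)\nabla^{M}(Y^{M}\bo{F})$ in the terms $Y^{i+1}\nabla^{i}(\bo{F})$ and observes that each coefficient equals $\bigl((M-i)+(i+1)\bigr)\frac{M!}{(i+1)!}=N\cdot\frac{M!}{(i+1)!}\equiv 0\bmod N$; so the identity holds already in $\RFoam_N(\emp,\Gamma)$, the characteristic hypothesis is used a second time beyond the existence of $\nabla$, and neither $X^N=0$ nor $Y^N=0$ is needed. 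You instead work in the operator algebra on $\sr F_N(\Gamma;R)$, where the Leibniz rule of Definition~\ref{df:opDonRFoamN} and Lemma~\ref{lem:LeibnizForComposition} gives the Weyl-type relation $[\nabla,W]=\Id$ for each weight-$1$ basepoint operator $W$ on a label-$1$ edge (that facet carries a single variable, whose image under $\nabla$ is $1$), hence $[\nabla^{k},W]=k\nabla^{k-1}$; the two commutator error terms then cancel integrally, and the whole difference collapses to $X^{N-1}\nabla^{N-1}Y^{N}=0$, closed by Lemma~\ref{lem:weight1basepointOpHasNthPowerZero}. Thus in your argument the characteristic assumption enters only through the well-definedness of $\nabla$ (Proposition~\ref{prop:DwellDefinedModN}), while the vanishing comes from the nilpotence $Y^{N}=0$, which holds over any ring but only on the quotient $\sr F_N(\Gamma;R)$, not on $\RFoam_N(\emp,\Gamma)$. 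Your route buys a computation free of binomial bookkeeping and cleanly isolates where each hypothesis is used; the paper's route buys an identity valid at the level of decorated foams before quotienting, avoids invoking the sphere-relation computation behind the nilpotence lemma, and uses the same expansion technique that reappears in the proof of Proposition~\ref{prop:inversesNprime}. The well-definedness points you flag (commutativity of $X,Y,Z$ and descent of the commutation relations to the quotient) do check out.
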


In particular, if $Z = X$, then $X^{N-1}\nabla^{N-1}$ intertwines the action of $X$ on $Y^{N-1}\sr F_N(\Gamma;R)$ with the action of $-Y$ on $X^{N-1}\sr F_N(\Gamma;R)$ because $X^N = 0$ by Lemma~\ref{lem:weight1basepointOpHasNthPowerZero}.

\begin{proof}
	Set $M = N -1$, and observe that for any dotted foam $\bo{F}\colon \emp \to \Gamma$, \begin{align*}
		&\mathrel{\phantom{=}} X^M(\nabla^M(ZY^M\bo{F}) + (Y - Z)\nabla^M(Y^M\bo{F}))\\
		&= X^M\sum_{i=0}^M\binom{M}{i}(\nabla^{M-i}(ZY^M) + (Y-Z)\nabla^{M-i}(Y^M))\nabla^i(\bo{F})\\
		&= X^M\sum_{i=0}^M\binom{M}{i}((M-i)\nabla^{M-i-1}(Y^M) + Y\nabla^{M-i}(Y^M))\nabla^i(\bo{F})\\
		&= X^M\sum_{i=0}^M\binom{M}{i}\left((M-i)\frac{M!}{(i+1)!} + \frac{M!}{i!} \right)Y^{i+1}\nabla^i(\bo{F})
	\end{align*}by the generalized Leibniz rule. Now note that \[
		(M-i)\frac{M!}{(i+1)!} + \frac{M!}{i!} = ((M-i) + (i+1))\frac{M!}{(i+1)!} \equiv 0 \bmod N.\qedhere
	\]
\end{proof}

Proposition~\ref{prop:doubleCompositesNprime} stated in the introduction follows from the following proposition. 

\begin{prop}\label{prop:inversesNprime}
	Let $\Gamma$ be an $\sl(N)$ MOY graph, and let $X,Y$ be weight $1$ basepoint operators associated to two basepoints lying on edges of $\Gamma$ that are labeled $1$. Assume that the characteristic of $R$ divides $N$ so that the operator $\nabla$ on $\sr F_N(\Gamma;R)$ is defined, and consider the $R$-linear maps \[
		\begin{tikzcd}
			X^{N-1}\sr F_N(\Gamma;R) \ar[r,swap,bend right = 40pt,"Y^{N-1}\nabla^{N-1}"] & Y^{N-1}\sr F_N(\Gamma;R). \ar[l,swap,bend right=40pt,"X^{N-1}\nabla^{N-1}"]
		\end{tikzcd}
	\]
	Then their composition is given by \[
		(X^{N-1}\nabla^{N-1})\circ(Y^{N-1}\nabla^{N-1}) = \begin{cases}
			\Id & N \text{ is prime}\\
			0 & N \text{ is composite}.
		\end{cases}
	\]
\end{prop}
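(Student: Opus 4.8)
The plan is to compute the composition directly on representatives, reduce using the relations $X^{N}=Y^{N}=0$ and $\Char R\mid N$, and finish with Wilson's theorem. Write $M=N-1$. Every class in $X^{N-1}\sr F_N(\Gamma;R)$ is represented by $X^{M}\bo{F}$ for a dotted foam $\bo{F}\colon\emp\to\Gamma$, so it suffices to evaluate the element $X^{M}\nabla^{M}\bigl(Y^{M}\nabla^{M}(X^{M}\bo{F})\bigr)$ of $\sr F_N(\Gamma;R)$. For concreteness assume the two basepoints lie on distinct edges, so that $X$ and $Y$ are associated to distinct facet variables; the case of a single edge is handled the same way and is even simpler, since then more terms get annihilated.

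The first ingredient is the formula
\[
	\nabla^{m}(X^{k}\bo{G})=\sum_{i}\binom{m}{i}\,\nabla^{m-i}(X^{k})\,\nabla^{i}(\bo{G}),\qquad \nabla^{m-i}(X^{k})=\tfrac{k!}{(k-m+i)!}\,X^{\,k-m+i}\ \ (0\text{ if }m-i>k),
\]
valid for any dotted foam $\bo{G}$ and any weight-$1$ basepoint operator $X$. It follows from the generalized Leibniz rule for $\nabla$ together with the fact that $X=e_1(p)$ is literally the single variable carried by the weight-$1$ facet through $p$, so that $\nabla$ restricted to polynomials in $X$ is the ordinary derivative. Applying this with $(m,k)=(M,M)$ computes $\nabla^{M}(X^{M}\bo{F})$; left-multiplying the result by $Y^{M}$ and applying $\nabla^{M}$ once more (again by the Leibniz rule, peeling off $Y^{M}$ and using $\nabla^{j}\circ\nabla^{M}=\nabla^{M+j}$) gives
\[
	\nabla^{M}\bigl(Y^{M}\nabla^{M}(X^{M}\bo{F})\bigr)=\sum_{j=0}^{M}\ \sum_{i\ge j}\binom{M}{j}\binom{M+j}{i}\frac{M!}{j!}\frac{M!}{(i-j)!}\;Y^{j}X^{\,i-j}\,\nabla^{i}(\bo{F}).
\]

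Now left-multiply by $X^{M}$. Since $X^{M+1}=X^{N}=0$ by Lemma~\ref{lem:weight1basepointOpHasNthPowerZero}, a term $X^{M}Y^{j}X^{\,i-j}\nabla^{i}(\bo{F})$ survives only when $i-j=0$, and collecting the survivors yields
\[
	X^{M}\nabla^{M}\bigl(Y^{M}\nabla^{M}(X^{M}\bo{F})\bigr)=\sum_{j=0}^{M}\binom{M}{j}\binom{M+j}{j}\frac{(M!)^{2}}{j!}\;X^{M}Y^{j}\,\nabla^{j}(\bo{F})\qquad\text{in }\sr F_N(\Gamma;R).
\]

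It remains to read off the coefficients. For $1\le j\le M$, the integer $\binom{M}{j}\binom{M+j}{j}\tfrac{(M!)^{2}}{j!}$ is divisible by $N$: if $N$ is prime this is because $N\le N-1+j\le 2N-2$ forces $N\mid\binom{N-1+j}{j}$, and if $N$ is composite it follows from $N\mid(M!)^{2}=((N-1)!)^{2}$ (for composite $N>4$ already $N\mid(N-1)!$, and $N=4$ is checked by hand). Hence, since the characteristic of $R$ divides $N$, every $j\ge 1$ term vanishes and the composition carries $X^{M}\bo{F}$ to $((N-1)!)^{2}\,X^{M}\bo{F}$. By Wilson's theorem $((N-1)!)^{2}$ is $\equiv 1\pmod N$ when $N$ is prime and $\equiv 0\pmod N$ when $N$ is composite, so, since $\Char R\mid N$, this scalar equals $1$ in $R$ for $N$ prime and $0$ in $R$ for $N$ composite, which is exactly the asserted identity. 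The main obstacle is the middle paragraph: organizing the two nested Leibniz expansions and verifying that, after discarding everything killed by $X^{N}=0$, the survivors assemble into the single clean sum above — once that is done, the number-theoretic conclusion is immediate.
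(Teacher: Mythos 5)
Your argument is correct and is essentially the paper's proof: expand $X^{M}\nabla^{M}\bigl(Y^{M}\nabla^{M}(X^{M}\bo{F})\bigr)$ (with $M=N-1$) on representatives via the generalized Leibniz rule, discard every term carrying a leftover positive power of $X$ using $X^{N}=0$, and finish with Wilson's theorem together with divisibility by $N$ of the coefficients of the $j\ge 1$ terms. The only difference is bookkeeping: by composing $\nabla^{j}\circ\nabla^{M}=\nabla^{M+j}$ before expanding you get the single coefficient $\binom{M}{j}\binom{M+j}{j}(M!)^{2}/j!$, which is exactly the paper's $M!\binom{M}{j}\frac{M!}{j!}\sum_{i=0}^{j}\binom{M}{i}\binom{j}{i}$ after Vandermonde's identity, and for prime $N$ you dispose of it via $N\mid\binom{N-1+j}{j}$ where the paper instead uses $\binom{N-1}{i}\equiv(-1)^{i}\pmod N$ and the alternating-sum identity.
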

\begin{proof}
	Set $M = N-1$. For any dotted foam $\bo{F}\colon \emp\to\Gamma$, we have \begin{align*}
		Y^M\nabla^M(X^M\bo{F}) &= Y^M\sum_{i=0}^M \binom{M}{i} \nabla^{M-i}(X^M)\nabla^i(\bo{F}) = \sum_{i=0}^M\binom{M}{i}\frac{M!}{i!}Y^MX^i\nabla^i(\bo{F}).
	\end{align*}Using the generalized Leibniz rule and the identity $X^{M+1} = X^N = 0$ of Lemma~\ref{lem:weight1basepointOpHasNthPowerZero}, we compute \begin{align*}
		X^M\nabla^M(Y^M\nabla^M(X^M\bo{F})) &= X^M \sum_{i=0}^M \binom{M}{i}\frac{M!}{i!} \nabla^M(Y^MX^i\nabla^i(\bo{F}))\\
		&= X^M \sum_{i=0}^M \binom{M}{i}\frac{M!}{i!} \binom{M}{i} \nabla^i(X^i)\nabla^{M-i}(Y^M\nabla^i(\bo{F}))\\
		&= X^M M! \sum_{i=0}^M \binom{M}{i}\binom{M}{i} \sum_{j=0}^{M-i} \binom{M-i}{j} \nabla^{M-i-j}(Y^M)\nabla^{i+j}(\bo{F})\\
		&= X^M M! \sum_{i=0}^M \sum_{j=0}^{M-i} \binom{M}{i}\binom{M}{i+j}\binom{i+j}{i}\frac{M!}{(i+j)!} Y^{i+j}\nabla^{i+j}(\bo{F})
	\end{align*}where the last equality uses the identity \[
		\binom{M}{i}\binom{M-i}{j} = \binom{M}{i+j}\binom{i+j}{i}.
	\]The double sum is over all pairs of nonnegative integers $i,j$ for which $i + j \leq M$. We rewrite this sum using $\ell = i + j$ to obtain \begin{align*}
		X^M\nabla^M(Y^M\nabla^M(X^M\bo{F})) &= \sum_{\ell=0}^M M!\binom{M}{\ell}\frac{M!}{\ell!} \left(\sum_{i=0}^{\ell} \binom{M}{i} \binom{\ell}{i} \right) X^M Y^\ell \nabla^\ell(\bo{F}).
	\end{align*}

	Observe that the $\ell = 0$ term in the sum is \[
		M!M!\cdot X^M\bo{F} = \begin{cases}
			X^M\bo{F} & N \text{ is prime}\\
			0 & N \text{ is composite.}
		\end{cases}
	\]It suffices to show that \[
		M!\binom{M}{\ell}\frac{M!}{\ell!}\sum_{i=0}^\ell \binom{M}{i}\binom{\ell}{i} \equiv 0 \bmod N
	\]for $1 \leq \ell \leq M$. By Wilson's theorem, if $N$ is composite and $N \neq 4$, then $M!\equiv 0\bmod N$. If $N = 4$, then the result is true by direct computation for $\ell = 1,2,3$. Finally, if $N$ is prime, then for $i = 0,\ldots,M$, \[
		\binom{M}{i} = \frac{M(M-1)\cdots(M-i+1)}{i!} \equiv \frac{(-1)(-2)\cdots(-i)}{i!} \equiv (-1)^i \bmod N
	\]because $i!$ is invertible mod $N$. It follows that \[
		\sum_{i=0}^\ell \binom{M}{i}\binom{\ell}{i} \equiv \sum_{i=0}^\ell (-1)^i \binom{\ell}{i} \equiv 0 \bmod N.\qedhere
	\]
\end{proof}

\begin{proof}[Proof of Theorem~\ref{thm:mainThm}]
	\leavevmode 
	\begin{enumerate}
		\item Let $D$ be a diagram of an oriented link $L$ with basepoints $q,r$. If $P$ is prime and the characteristic of $R$ is $P$, then by Proposition~\ref{prop:doubleCompositesNprime} (Proposition~\ref{prop:inversesNprime}), the chain maps \[
		\begin{tikzcd}
			X_q^{P-1}\KRC_P(D;R) \ar[r,swap,bend right = 40pt,"X_r^{P-1}\nabla^{P-1}"] & X_r^{P-1}\KRC_P(D;R) \ar[l,swap,bend right=40pt,"X_q^{P-1}\nabla^{P-1}"]
		\end{tikzcd}
		\]are inverse isomorphisms. Let $\Phi\colon \rKR_P(L,q;R) \to \rKR_P(L,r;R)$ be the isomorphism on homology induced by $X_r^{P-1}\nabla^{P-1}$. By construction, $\Phi$ preserves both gradings. The identity $\Phi\circ X_q + X_r \circ \Phi = 0$ follows from Lemma~\ref{lem:equivarianceNprime}. 
		\item We use the argument appearing in the proof of \cite[Proposition 1.7]{MR3071132}. Let $L$ be an oriented link with a basepoint $q$, and consider the split union $L \amalg O$ where $O$ is an unknot. Let $r$ be a basepoint on $O$. Because reduced $\sl(P)$ link homology over $R$ is basepoint-independent, there is an isomorphism \[
			\Phi\colon \rKR_P(L \amalg O, r;R) \to \rKR_P(L\amalg O,q;R)
		\]that satisfies $\Phi\circ X_q + X_r\circ\Phi = 0$. By Corollary~\ref{cor:splitUnionReducedslNHomology}, we have an identification \[
			\rKR_P(L\amalg O,r;R) \cong \KR_P(L;R) \otimes \rKR_P(O,r;R) \cong \KR_P(L;R)
		\]because $\rKR_P(O,r;R) \cong R$. Another application of Corollary~\ref{cor:splitUnionReducedslNHomology} gives an identification \[
			\rKR_P(L \amalg O,q;R)\cong \rKR_P(L,q;R) \otimes \KR_P(O;R) \cong \rKR_P(L,q;R) \otimes R[X]/X^P
		\]because $\KR_P(O;R) \cong R[X]/X^P$. These identifications respect the basepoint actions. \qedhere
	\end{enumerate}
\end{proof}

\section{The operator on the cohomology of the Grassmannian}\label{sec:grassmannian}

Let $O^k$ denote an oriented circle in the plane with the label $k$ where $1 \leq k \leq N-1$, viewed as an $\sl(N)$ MOY graph. Let $\G(k,N)$ be the Grassmannian of $k$-dimensional complex linear subspaces of $\C^N$. There is an identification between $\sr F_N(O^k;R)$ and the cohomology of $\G(k,N)$ with a grading shift, which we explain below. If the characteristic of $R$ divides $N$ so that $\nabla$ is defined on $\sr F_N(O^k;R)$, then under the described identification, we may view $\nabla$ as an operator on $H^*(\G(k,N);R)$. In this section, we explicitly compute $\nabla$ on $H^*(\G(k,N);R)$.

We recall some facts surrounding the cohomology of $\G(k,N)$. See \cite[chapter 9.4]{MR1464693} as a reference.

\begin{df}
	A \textit{partition} $\lambda$ is a sequence $\lambda = (\lambda_1,\ldots,\lambda_\ell)$ of positive integers for which $\lambda_1 \ge \cdots \ge \lambda_\ell > 0$. The numbers $\lambda_1,\ldots,\lambda_\ell$ are called the \textit{parts} of $\lambda$, and we say that $\lambda$ is a \textit{partition of} $|\lambda|\coloneq \lambda_1 + \cdots + \lambda_\ell$. By convention, we set $\lambda_j = 0$ if $j > \ell$. 

	Let $\lambda$ be a partition with at most $k$ parts. The \textit{($k$-variable) Schur polynomial} $s_\lambda$ \textit{associated to} $\lambda$ is the integral homogeneous symmetric polynomial in $k$ variables of degree $|\lambda|$ given by the formula \[
		s_\lambda(X_1,\ldots,X_k) = \frac{\det(X_i^{\lambda_j + k - j})}{\prod_{1 \leq i < j \leq k} \: (X_i - X_j)}
	\]where $\det(X_i^{\lambda_j + k - j})$ is the determinant of the $k \x k$ matrix whose $(i,j)$-entry is $X_i^{\lambda_j + k - j}$. Given a fixed nonnegative integer $d$, a basis for the free $\Z$-module of homogeneous symmetric polynomials in $k$ variables of degree $d$ is given by the collection of Schur polynomials $s_\lambda$ where $\lambda$ varies over all partitions of $d$ with at most $k$ parts. The $i$th elementary symmetric polynomial $e_i(X_1,\ldots,X_k)$ is equal to the Schur polynomial associated to the partition of $i$ that has $i$ parts all equal to $1$. 

	Let $V = (V_1,\ldots,V_N)$ be a sequence of vector subspaces $V_1 \subset V_2 \subset \cdots \subset V_N = \C^N$ such that $\dim V_i = i$. The \textit{Schubert cycle} $\sigma_\lambda(V)$ \textit{associated to a partition $\lambda$ with at most $k$ parts relative to $V$} is the subvariety of $\G(k,N)$ given by \[
		\sigma_\lambda(V) = \{\:\Lambda_k \subset \C^N \:|\: \dim (\Lambda_k \cap V_{N-k + i - \lambda_i}) \ge i \:\}.
	\]The homology class $\sigma_\lambda \coloneq [\sigma_\lambda(V)]$ is independent of the choice of $V$. If $\lambda$ has a part strictly greater than $N - k$, then $\sigma_\lambda = 0$. A basis for the homology of $\G(k,N)$ is given by the classes $\sigma_\lambda$ associated to partitions $\lambda$ where no part of $\lambda$ exceeds $N - k$. The Schubert cycle $\sigma_\lambda(V)$ is of complex codimension $|\lambda|$ so the Poincar\'e dual $\PD(\sigma_\lambda)$ of $\sigma_\lambda$ is a cohomology class of degree $2|\lambda|$. 
\end{df}

If $\lambda$ and $\mu$ are partitions with at most $k$ parts, then $s_\lambda\cdot s_\mu$ is a homogeneous symmetric polynomial of degree $|\lambda| + |\mu|$. Because Schur polynomials form a basis for the free module of symmetric polynomials, we may write $s_\lambda\cdot s_\mu = \sum_\nu c_{\lambda\mu}^\nu s_\nu$. The numbers $c_{\lambda\mu}^\nu \in \Z$ are called the \textit{Littlewood-Richardson coefficients}. We note that there is a combinatorial formula for these coefficients called the Littlewood-Richardson rule (see \cite{MR1862150} for a survey).

\begin{prop}\label{prop:cupProductLittlewoodRichardson}
	The cup product structure on the cohomology of $\G(k,N)$ is given by multiplication of Schur polynomials. If $\lambda,\mu$ are partitions with at most $k$ parts, then \[
		\PD(\sigma_\lambda) \cup \PD(\sigma_\mu) = \sum_\nu c_{\lambda\mu}^\nu\: \PD(\sigma_\nu)
	\]where $c_{\lambda\mu}^\nu$ are the Littlewood-Richardson coefficients.
\end{prop}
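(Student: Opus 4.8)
The plan is to deduce the proposition from two classical facts of Schubert calculus on Grassmannians — the Pieri formula and the Giambelli determinantal formula — which reduce the statement to a routine comparison with the parallel identities for symmetric polynomials. It is standard that the quotient ring $A := \Lambda_k/(h_{N-k+1},\dots,h_N)$, where $\Lambda_k = \Z[X_1,\dots,X_k]^{\fk S_k}$ and $h_p$ denotes the $p$th complete homogeneous symmetric polynomial, is a free $\Z$-module whose basis is the set of residues of the Schur polynomials $s_\lambda$ for $\lambda$ inside the $k\x(N-k)$ rectangle, and that the residue of $s_\nu$ vanishes precisely when $\nu_1 > N-k$. Since $H^*(\G(k,N);\Z)$ is free with basis $\{\PD(\sigma_\lambda)\}$ over the same index set and $\PD(\sigma_\nu)=0$ precisely when $\nu_1>N-k$, there is a unique $\Z$-module isomorphism $\Theta\colon A \to H^*(\G(k,N);\Z)$ sending the residue of $s_\lambda$ to $\PD(\sigma_\lambda)$. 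Granting that $\Theta$ is a ring homomorphism, the proposition follows at once: compute $s_\lambda s_\mu=\sum_\nu c_{\lambda\mu}^\nu s_\nu$ in $\Lambda_k$, pass to $A$ (discarding terms with $\nu_1>N-k$, for which $\PD(\sigma_\nu)=0$), and apply $\Theta$.

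\textbf{Multiplicativity of $\Theta$.} To verify that $\Theta$ respects products it suffices to check compatibility with multiplication by the special classes, because the residues of $h_1,\dots,h_{N-k}$ generate $A$ as a ring and the special Schubert classes $\PD(\sigma_{(p)})$, $1\le p\le N-k$, generate $H^*(\G(k,N);\Z)$ — the latter being the Giambelli formula $\PD(\sigma_\lambda)=\det\bigl(\PD(\sigma_{(\lambda_i-i+j)})\bigr)_{1\le i,j\le k}$, which mirrors the Jacobi--Trudi identity $s_\lambda=\det(h_{\lambda_i-i+j})$. Since $s_{(p)}=h_p$, what remains is the Pieri rule: $\PD(\sigma_\lambda)\cup\PD(\sigma_{(p)})=\sum_\mu\PD(\sigma_\mu)$ with $\mu$ running over the partitions obtained from $\lambda$ by adding a horizontal strip of $p$ boxes, which is to be compared with the classical symmetric-function Pieri rule $s_\lambda h_p=\sum_\mu s_\mu$ over exactly the same $\mu$ (those with $\mu_1>N-k$ contributing $0$ in cohomology, consistently with the passage to $A$). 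This comparison forces $\Theta$ to intertwine multiplication by $\PD(\sigma_{(p)})$ with multiplication by the residue of $h_p$ for every $p\ge 1$, and hence, by generation, to be a ring isomorphism.

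\textbf{The main obstacle.} The one ingredient that is not formal is the cohomological Pieri formula itself. I would prove it in the standard way: realize $\sigma_\lambda$ and $\sigma_{(p)}$ by Schubert varieties $\sigma_\lambda(V),\sigma_{(p)}(V')$ attached to two complete flags $V,V'\subset\C^N$ in general position, invoke Kleiman's transversality theorem for the transitive $\GL_N$-action on $\G(k,N)$ to conclude that $\sigma_\lambda(V)\cap\sigma_{(p)}(V')$ is a generically transverse union of Schubert varieties, and then identify the components and their multiplicities by a dimension count applied to the incidence conditions $\dim(\Lambda_k\cap V_{N-k+i-\lambda_i})\ge i$. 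This transversality-and-incidence analysis is the crux of the matter; everything else — the structure of the ideal $(h_{N-k+1},\dots,h_N)$, its identification with the span of the Schur polynomials indexed outside the rectangle, and the reconciliation of the two Pieri rules — is elementary combinatorics with symmetric functions. Since the excerpt already cites \cite[Chapter 9.4]{MR1464693} for the cohomology of Grassmannians, in the writeup I would quote the Pieri and Giambelli formulas from there and present only the two reductions above.
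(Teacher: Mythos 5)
The paper offers no proof of this proposition: it is recalled as a classical fact with a pointer to chapter 9.4 of the cited reference, and your sketch is precisely the standard argument found there — the geometric Pieri rule plus Giambelli/Jacobi--Trudi, packaged as the ring isomorphism $\Z[X_1,\ldots,X_k]^{\fk S_k}/(h_{N-k+1},\ldots,h_N)\cong H^*(\G(k,N);\Z)$ sending $s_\lambda$ to $\PD(\sigma_\lambda)$. Your reductions are sound (indeed Giambelli is not strictly needed: once multiplication by the residue of $h_p$ is intertwined with cup product by $\PD(\sigma_{(p)})$ and these residues generate the quotient ring, multiplicativity of $\Theta$ and generation of $H^*(\G(k,N);\Z)$ by special classes both follow), so the proposal is correct and takes the same route the paper defers to.
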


For any ring $R$, the Schur polynomials, thought of as elements of $R[X_1,\ldots,X_k]^{\fk S_k}$, form an $R$-basis for the free $R$-module of symmetric polynomials. Furthermore, the cup product of the classes $\PD(\sigma_\lambda)$ and $\PD(\sigma_\mu)$ in $H^*(\G(k,N);R)$ is given by the Littlewood-Richardson coefficients, viewed as elements of $R$, just as in Proposition~\ref{prop:cupProductLittlewoodRichardson}.

Let $D\colon \emp \to O^k$ be the foam consisting of a disc labeled $k$ with no dots. For each partition $\lambda$ with at most $k$ parts, the Schur polynomial $s_\lambda$ is a symmetric polynomial in $k$ variables, so we may view $s_\lambda D$ as a linear combination of dotted foams from $\emp$ to $O^k$. 

\begin{prop}\label{prop:KRNofUnknotIsCohOfGkN}
	There is an isomorphism \[
		H^*(\G(k,N);R) \to \sr F_N(O^k;R)
	\]given by sending $\PD(\sigma_\lambda)$ to the class of $s_\lambda D$ in $\sr F_N(O^k;R)$ for each partition $\lambda$ with at most $k$ parts. This isomorphism is homogeneous of degree $-k(N-k)$.
\end{prop}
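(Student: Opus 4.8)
The plan is to verify the isomorphism by exhibiting both sides as free $R$-modules with matching bases. By Corollary~\ref{cor:FNwithRCoefficients}, $\sr F_N(O^k;R)$ is free with $q$-graded rank $\langle O^k\rangle$, which by MOY calculus is the $q$-binomial coefficient $\qbinom{N}{k}$; on the other side, $H^*(\G(k,N);R)$ is free with a basis indexed by partitions $\lambda$ fitting in a $k\times(N-k)$ box, and its graded Poincaré polynomial is the same Gaussian binomial $\qbinom{N}{k}$ after accounting for the degree shift by $k(N-k)$. So the map $\PD(\sigma_\lambda)\mapsto [s_\lambda D]$, being $R$-linear and sending a would-be basis to a spanning set of the right size, is an isomorphism provided that the classes $[s_\lambda D]$ (for $\lambda$ in the box) are in fact $R$-linearly independent in $\sr F_N(O^k;R)$.

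First I would establish that linear independence using the perfect pairing of Corollary~\ref{cor:FNwithRCoefficients}. The pairing is computed by the closed-foam evaluation: $\langle s_\lambda D, s_\mu D\rangle_R = \langle s_\lambda\, s_\mu\, S\rangle_R$ where $S$ is the $2$-sphere labeled $k$ and the Schur polynomials decorate it. By the Robert--Wagner evaluation of a dotted sphere (this is the higher-weight analogue of the sphere relation used in the proof of Lemma~\ref{lem:weight1basepointOpHasNthPowerZero}; see \cite[Proposition 3.32]{MR4164001}), $\langle Q\, S\rangle_R$ picks out, up to sign, the coefficient of the top Schur polynomial $s_{(N-k)^k}$ when the symmetric polynomial $Q$ is expanded in the Schur basis and reduced modulo partitions leaving the box. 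Combined with the Littlewood--Richardson expansion of Proposition~\ref{prop:cupProductLittlewoodRichardson} and the fact that $s_\lambda\cdot s_{\lambda^{\vee}}$ contains $s_{(N-k)^k}$ with coefficient $1$ (where $\lambda^\vee$ is the complementary partition in the box), this shows the Gram matrix of the $[s_\lambda D]$ in a suitable ordering of partitions is unitriangular over $R$, hence invertible. Therefore the $[s_\lambda D]$ form an $R$-basis, the map is an isomorphism, and simultaneously the same computation shows it is an isometry for the natural pairings, which also matches the expectation that the assignment intertwines cup product with foam composition.

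Next I would check the degree shift. A dot of weight $w$ on a facet labeled $k$ has quantum degree $2w$, so $s_\lambda D$ has quantum degree $2|\lambda|$ relative to $\deg(D)$. The disc $D$ has a single facet labeled $k$ with $\chi = 1$, so $\deg(D) = -k(N-k)$. Thus $[s_\lambda D]$ lies in $q$-degree $2|\lambda| - k(N-k)$, whereas $\PD(\sigma_\lambda)$ lies in cohomological degree $2|\lambda|$; the map lowers degree by $k(N-k)$, as claimed. (With the convention identifying cohomological degree $2|\lambda|$ with $q$-degree $2|\lambda|$, the stated shift is exactly this.)

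The main obstacle is pinning down the precise evaluation formula for a dotted sphere labeled $k$ in the Robert--Wagner theory — i.e., verifying that $\langle s_\lambda\, s_\mu\, S_k\rangle_R = \pm\delta_{\mu,\lambda^\vee}$ modulo out-of-box partitions. For $k=1$ this is the elementary sphere relation already invoked in Lemma~\ref{lem:weight1basepointOpHasNthPowerZero}; for general $k$ one must either cite the corresponding statement in \cite{MR4164001} or derive it from the coloring-sum definition in section~\ref{subsec:RobertWagnerEvaluation}, where colorings of $S_k$ correspond to $k$-element subsets of $\{1,\ldots,N\}$ and the sum $\sum_c P_f(X_{c(f)})/Q(S_k,c)$ reproduces (up to sign) the expansion coefficient against $s_{(N-k)^k}$ — this is essentially the classical fact that the localization formula for $H^*(\G(k,N))$ computes the coefficient of the top class. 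Once that sphere evaluation is in hand, the unitriangularity of the Gram matrix and hence the whole proposition follow formally.
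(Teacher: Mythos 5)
Your proposal is correct and follows essentially the same route as the paper: both rest on the Robert--Wagner evaluation of the Schur-decorated sphere labeled $k$ (the paper cites \cite[Proposition 3.38]{MR4164001}, which is exactly the input you flag as needing a citation), combined with the perfectness of the pairing, the Littlewood--Richardson/complementary-partition duality, and the equality of $q$-graded ranks. Your Gram-matrix argument and the degree computation $\deg(D) = -k(N-k)$ simply make explicit the steps the paper calls straightforward.
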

\begin{proof}
	Let $S$ denote the $2$-sphere viewed as an $\sl(N)$ foam with a single facet labeled $k$. For any partition $\lambda$ with at most $k$ parts, we may view $s_\lambda S$ as a $\Z$-linear combination of dotted closed foams. Robert--Wagner \cite[Proposition 3.38]{MR4164001} show that \[
		\langle s_\lambda S \rangle_\Z = \begin{cases}
			(-1)^{k(k+1)/2} & \lambda = (N-k,\ldots,N-k) \text{ with $k$ parts}\\
			0 & \text{else.}
		\end{cases}
	\] 
	Note that since $\sigma_{(N-k,\ldots,N-k)}$ is the homology class of a point, $\PD(\sigma_{(N-k,\ldots,N-k)})$ generates the top-degree cohomology group of $\G(k,N)$. It is now straightforward to show that the map in the statement of the proposition is an isomorphism using the fact that the Poincar\'e pairing on the cohomology of $\G(k,N)$ is perfect and the fact that $H^*(\G(k,N);R)$ and $\sr F_N(O^k;R)$ have the same $q$-graded rank. 
\end{proof}

Let $s_\lambda$ be the Schur polynomial associated to a partition $\lambda$ with at most $k$ parts. Then $\nabla(s_\lambda)$ is another homogeneous symmetric polynomial in $k$ variables and may therefore be written as a linear combination  \[
	\nabla(s_\lambda) = \sum_\nu d_\lambda^\nu s_\nu.
\]Since $\nabla$ is well-defined on $\sr F_N(O^k;R)$ when the characteristic of $R$ divides $N$, Proposition~\ref{prop:KRNofUnknotIsCohOfGkN} implies the following observation about the cohomology ring of $\G(k,N)$. 

\begin{cor}\label{cor:DdefinedOnCohOfGkN}
	If the characteristic of $R$ divides $N$, then there is a map \[
		\nabla\colon H^*(\G(k,N);R) \to H^{*-2}(\G(k,N);R)
	\]given by $\PD(\sigma_\lambda) \mapsto \sum_\nu d_\lambda^\nu \PD(\sigma_\nu)$ for every partition $\lambda$ with at most $k$ parts that satisfies the Leibniz rule $\nabla(\alpha \cup \beta) = \nabla(\alpha) \cup \beta + \alpha \cup \nabla(\beta)$ with respect to the cup product. 
\end{cor}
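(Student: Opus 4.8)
Let $\Theta\colon H^*(\G(k,N);R) \to \sr F_N(O^k;R)$ denote the isomorphism of Proposition~\ref{prop:KRNofUnknotIsCohOfGkN}, which sends $\PD(\sigma_\lambda)$ to the class of $s_\lambda D$. The plan is to define $\nabla$ on $H^*(\G(k,N);R)$ as the conjugate $\Theta^{-1}\circ\nabla\circ\Theta$ of the operator $\nabla$ on $\sr F_N(O^k;R)$. The latter exists because the characteristic of $R$ divides $N$ (Proposition~\ref{prop:DwellDefinedModN}), so the conjugate is a well-defined $R$-linear endomorphism of $H^*(\G(k,N);R)$; and since $\Theta$ is homogeneous of degree $-k(N-k)$ while $\nabla$ lowers quantum degree by $2$, the conjugate lowers cohomological degree by $2$, i.e.\ it maps $H^*$ into $H^{*-2}$.

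To read off the action on the Schubert basis, recall from the remark after Definition~\ref{df:opDonRFoamN} that $\nabla(QD) = \nabla(Q)D$ in $\RFoam_N(\emp,O^k)$ for every symmetric polynomial $Q \in R[X_1,\ldots,X_k]^{\fk S_k}$, where on the right $\nabla$ denotes the partial-derivative-sum operator on polynomials. Taking $Q = s_\lambda$ and expanding $\nabla(s_\lambda) = \sum_\nu d_\lambda^\nu s_\nu$ in the Schur basis yields $\nabla(\PD(\sigma_\lambda)) = \Theta^{-1}([\nabla(s_\lambda)D]) = \Theta^{-1}([\sum_\nu d_\lambda^\nu s_\nu D]) = \sum_\nu d_\lambda^\nu \PD(\sigma_\nu)$, which is the asserted formula. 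Partitions $\nu$ with a part exceeding $N-k$ cause no trouble here: for such $\nu$ we have $\PD(\sigma_\nu) = 0$, and correspondingly $s_\nu D$ represents $0$ in $\sr F_N(O^k;R)$ by Proposition~\ref{prop:KRNofUnknotIsCohOfGkN}.

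For the Leibniz rule, by $R$-bilinearity of the cup product and $R$-linearity of $\nabla$ it suffices to verify it on a pair of Schubert classes $\PD(\sigma_\lambda), \PD(\sigma_\mu)$. The essential point is that $\Theta$ carries the cup product on $H^*(\G(k,N);R)$ to the multiplication $[PD]\cdot[QD] = [PQD]$ on $\sr F_N(O^k;R)$: indeed, by the $R$-coefficient version of Proposition~\ref{prop:cupProductLittlewoodRichardson} one has $\PD(\sigma_\lambda)\cup\PD(\sigma_\mu) = \sum_\nu c_{\lambda\mu}^\nu\PD(\sigma_\nu)$, and $\Theta$ sends the right-hand side to $\sum_\nu c_{\lambda\mu}^\nu[s_\nu D] = [(\sum_\nu c_{\lambda\mu}^\nu s_\nu)D] = [s_\lambda s_\mu D]$ by the polynomial identity $s_\lambda s_\mu = \sum_\nu c_{\lambda\mu}^\nu s_\nu$. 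Granting this, the Leibniz rule on $H^*(\G(k,N);R)$ is simply the $\Theta^{-1}$-image of the polynomial Leibniz rule $\nabla(s_\lambda s_\mu) = \nabla(s_\lambda)s_\mu + s_\lambda\nabla(s_\mu)$, after expanding $\nabla(s_\lambda)$ and $\nabla(s_\mu)$ in the Schur basis and reinterpreting products of Schur polynomials as cup products of Schubert classes. The step that warrants the most care — though it is not deep — is this multiplicativity of $\Theta$, since $\sr F_N(O^k;R)$ is produced by the universal construction rather than presented \textit{a priori} as a quotient of a symmetric polynomial ring; one must feed in the Littlewood--Richardson description of the cup product and check that Schur polynomials indexed outside the $k\times(N-k)$ box are sent consistently to the corresponding (possibly zero) classes on both sides. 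Everything else is a formal transport through $\Theta$ of the two properties of $\nabla$ already established over $\RFoam_N$: $R$-linearity with degree shift $-2$, and the Leibniz rule.
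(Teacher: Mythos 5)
Your proof is correct and follows essentially the same route as the paper, which states Corollary~\ref{cor:DdefinedOnCohOfGkN} as an immediate consequence of Proposition~\ref{prop:KRNofUnknotIsCohOfGkN} and the well-definedness of $\nabla$ on $\sr F_N(O^k;R)$: you simply transport $\nabla$ through the isomorphism, using that cup product corresponds to multiplication of Schur polynomials (acting by dots on the disc) and that $[s_\nu D]=0$ when $\nu$ leaves the $k\times(N-k)$ box. Your write-up fills in the multiplicativity check that the paper leaves implicit, so it is a faithful, slightly more detailed version of the intended argument.
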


Suppose the characteristic of $R$ does not divide $N$. Then it is easy to find a partition $\lambda$ for which $\lambda_1 > N - k$, implying that $\PD(\sigma_\lambda) = 0$, where the sum $\sum_\nu d_\lambda^\nu \PD(\sigma_\nu)$ is nonzero in $H^*(\G(k,N);R)$. In this sense, $\nabla$ is not well-defined for every partition $\lambda$ with at most $k$ parts. If we define $\nabla$ on $H^*(\G(k,N);R)$ by the given formula just on the basis $\{\PD(\sigma_\lambda)\}_\lambda$ where $\lambda_1 \leq N - k$, then $\nabla$ will be well-defined, but it will not satisfy the Leibniz rule. The main content of Corollary~\ref{cor:DdefinedOnCohOfGkN} is the claim that $d_\lambda^\nu$ is divisible by $N$ whenever $\lambda_1 > N - k$ and $\nu_1 \leq N - k$. We verify this directly by providing an explicit formula for $d_\lambda^\nu$. 

\begin{prop}\label{prop:formulaforDcoefficientsInSchurPolys}
	The coefficients $d_\lambda^\nu \in \Z$ defined by the equation $\nabla(s_\lambda) = \sum_\nu d_\lambda^\nu s_\nu$ are given by \[
		d_\lambda^\nu = \begin{cases}
			\lambda_\ell + k - \ell & \nu \text{ is obtained from }\lambda\text{ by decreasing }\lambda_\ell\text{ by }1\\
			0 & \text{else.}
		\end{cases}
	\]
\end{prop}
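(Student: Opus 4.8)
The plan is to compute $\nabla(s_\lambda)$ directly from the bialternant formula defining the Schur polynomial, using the fact that $\nabla$ annihilates the Vandermonde determinant. For a $k$-tuple of integers $\mu=(\mu_1,\dots,\mu_k)$ write $a_\mu=\det(X_i^{\mu_j})_{i,j=1}^k$, and set $\delta=(k-1,k-2,\dots,1,0)$, so that $a_\delta=\prod_{1\le i<j\le k}(X_i-X_j)$ and the formula in the excerpt reads $s_\lambda=a_{\lambda+\delta}/a_\delta$. Since $\nabla(X_i-X_j)=0$, the Leibniz rule gives $\nabla(a_\delta)=0$, hence $\nabla(a_{\lambda+\delta})=\nabla(s_\lambda\cdot a_\delta)=\nabla(s_\lambda)\cdot a_\delta$; as $a_\delta$ is a non-zero-divisor in $R[X_1,\dots,X_k]$ this determines $\nabla(s_\lambda)$. (By Lemma~\ref{lem:identifyingVariablesCommutesWithD} applied to permutations of the variables, $\nabla(s_\lambda)$ is symmetric, so the expansion $\nabla(s_\lambda)=\sum_\nu d_\lambda^\nu s_\nu$ is meaningful.)

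The key step is the identity $\nabla(a_\mu)=\sum_{j=1}^k \mu_j\, a_{\mu-e_j}$, where $\mu-e_j$ denotes $\mu$ with its $j$-th entry lowered by one. To prove it, observe that $\partial/\partial X_i$ differentiates only the $i$-th row of the matrix $(X_i^{\mu_j})$; expanding the resulting determinant along the $i$-th row, summing over $i$, and regrouping by the column index $j$, one recognizes $\sum_i X_i^{\mu_j-1}\cdot(\text{cofactor at }(i,j))$ as the cofactor expansion of $a_{\mu-e_j}$ along its $j$-th column. Taking $\mu=\lambda+\delta$, so $\mu_j=\lambda_j+k-j$, and dividing by $a_\delta$ yields
\[
	\nabla(s_\lambda)=\sum_{j=1}^k (\lambda_j+k-j)\,\frac{a_{(\lambda+\delta)-e_j}}{a_\delta}.
\]

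It remains to evaluate each summand. After lowering the $j$-th exponent of $\lambda+\delta$ by one, the $j$-th and $(j+1)$-st exponents become $\lambda_j+k-j-1$ and $\lambda_{j+1}+k-j-1$, which are equal precisely when $\lambda_j=\lambda_{j+1}$ (with the convention $\lambda_{j+1}=0$ once $j$ exceeds the number of parts of $\lambda$); in that case the matrix has two equal columns and $a_{(\lambda+\delta)-e_j}=0$, while for $j=k$ with $\lambda_j=0$ the coefficient $\lambda_j+k-j$ itself vanishes. In every surviving case $(\lambda+\delta)-e_j=(\lambda-e_j)+\delta$ with $\lambda-e_j$ a partition with at most $k$ parts, so $a_{(\lambda+\delta)-e_j}/a_\delta=s_{\lambda-e_j}$; and the condition for this to happen, namely $\lambda_j>\lambda_{j+1}$, is exactly the condition that $\lambda-e_j$ be obtained from $\lambda$ by decreasing a single part by one. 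Reading off coefficients of the resulting expansion $\nabla(s_\lambda)=\sum_{j:\lambda_j>\lambda_{j+1}}(\lambda_j+k-j)\,s_{\lambda-e_j}$ gives exactly the claimed formula, with the index $\ell$ of the proposition being this $j$. I expect the main obstacle to be precisely this final bookkeeping: matching the degenerate (vanishing) terms of the determinant expansion to the indices for which $\lambda-e_j$ fails to be a partition, while correctly handling both the ``staircase tail'' of $\lambda+\delta$ when $\lambda$ has fewer than $k$ parts and the endpoint $j=k$.
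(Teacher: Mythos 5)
Your argument is correct and is essentially the paper's own proof: both differentiate the bialternant determinant $\det(X_i^{\lambda_j+k-j})$ using $\nabla(X_i-X_j)=0$ together with the Leibniz rule, identify the surviving terms as $(\lambda_\ell+k-\ell)\det(X_i^{\nu_j+k-j})$, and kill the degenerate terms (where $\lambda_\ell=\lambda_{\ell+1}$, or $\ell=k$ with $\lambda_k=0$) by proportional columns or a vanishing coefficient before dividing by the Vandermonde. Your intermediate identity $\nabla(a_\mu)=\sum_j \mu_j\,a_{\mu-e_j}$ is just a cleanly packaged form of the same column-by-column differentiation, and your explicit treatment of the staircase tail and the $j=k$ endpoint is a slightly more careful version of what the paper does implicitly.
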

\noindent Thus, if $\lambda_1 = N - k + 1$ and $\nu$ is obtained from $\lambda$ by decreasing $\lambda_1$ by $1$, then $d_\lambda^\nu = N$. 
\begin{proof}
	By the definition of the Schur polynomial $s_\lambda$, we have the equality \[
		s_\lambda \cdot\prod_{i < j} (X_i - X_j) = \det(X_i^{\lambda_j + k - j}).
	\]Since $\nabla(X_i - X_j) = 0$, it follows from the Leibniz rule that \begin{align*}
		\nabla(s_\lambda) \cdot \prod_{i < j} (X_i - X_j) &= \nabla(\det(X_i^{\lambda_j + k - j})) = \sum_{k=1}^k \det(M_\ell)
	\end{align*}where $M_\ell$ is obtained from the matrix $X_i^{\lambda_j + k - j}$ by applying $\nabla$ to each entry in the $\ell$th row. If decreasing $\lambda_\ell$ by $1$ does not yield a valid partition, which occurs precisely when $\lambda_\ell = \lambda_{\ell + 1}$, then $\det(M_\ell) = 0$ because the $\ell$th row of $M_\ell$ is just $\lambda_\ell + k - \ell$ times the $(\ell + 1)$th row of $M_\ell$. If decreasing $\lambda_\ell$ by $1$ does yield a valid partition $\nu$, then \[
		\det(M_\ell) = (\lambda_\ell + k - \ell)\det(X_i^{\nu_j + k - j}).
	\]The result now follows by dividing both sides by $\prod_{i < j} (X_i - X_j)$.
\end{proof}

\begin{rem}
	Proposition~\ref{prop:formulaforDcoefficientsInSchurPolys} is a special case of \cite[Proposition 1.1]{MR4098997}, which computes the action of $\nabla$ on Schubert polynomials, which generalize Schur polynomials. The author thanks Christian Gaetz for this reference. 
\end{rem}

\raggedright
\bibliography{diffOpKR}
\bibliographystyle{alpha}

\vspace{10pt}

\textit{Department of Mathematics}

\textit{Harvard University}

\textit{Science Center, 1 Oxford Street}

\textit{Cambridge, MA 02138}

\textit{USA}

\vspace{10pt}

\textit{Email:} \texttt{jxwang@math.harvard.edu}

\end{document}